\documentclass[]{amsart}
\usepackage[letterpaper,hmargin=27mm,vmargin=27mm,bmargin=27mm,tmargin=22mm]{geometry}
\usepackage[utf8]{inputenc}
\usepackage{amsmath,amsthm,amssymb}
\usepackage{mathtools}
\usepackage[T1]{fontenc}
\usepackage[english]{babel}
\usepackage{mathrsfs}
\usepackage{yfonts}
\usepackage{pgf,tikz}
\usetikzlibrary{arrows}
\usepackage{listings}
\usepackage{graphicx}
\usepackage{float}
\usepackage{caption}
\usepackage{subcaption}
\usepackage[normalem]{ulem}
\usepackage{changepage}
\usepackage[final]{pdfpages}
\usepackage{hyperref}
\usepackage{lipsum}
\usepackage{url}
\usepackage{algorithm}
\usepackage{algpseudocode}
\usepackage{pgfplots}
\usepackage[toc]{glossaries}
\usepackage{afterpage}
\usepackage{mathrsfs}
\usepackage{yfonts}
\usepackage{pgf,tikz}
\usetikzlibrary{arrows}
\usepackage{authblk}
\usepackage{bbm}
\usepackage{enumitem}   
\usepackage{makecell}

\usepackage[
backend=biber, %
style=numeric, %
 url=false, %
 doi=false, %
 eprint=true, %
 giveninits=true, %
 isbn=false,
 maxbibnames=5, %
 sortcites=true, %
]{biblatex}

\AtEveryBibitem{\clearlist{language}}
\DeclareFieldFormat{title}{#1}
\newtheorem{theorem}{Theorem}[section]

\newtheorem{lemma}[theorem]{Lemma}
\newtheorem{definition}[theorem]{Definition}
\newtheorem{remark}[theorem]{Remark}

\theoremstyle{definition}

\newcommand{\norm}[1]{\left\lVert#1\right\rVert}
\newcommand{\abs}[1]{\left\lvert#1\right\rvert}

\DeclareMathOperator{\im}{Im}

\DeclareMathOperator{\supp}{supp}

\DeclareMathOperator{\Tr}{Tr}

\DeclareMathOperator{\GOE}{GOE}

\newcommand{\Dim}{\Delta \im}
\newcommand{\di}{\mathop{}\!{d}}

\newcommand{\dt}{\frac{\di}{\di t}}
\newcommand{\dF}{\frac{\di}{\di t}}
\DeclareMathOperator{\ii}{i}
\DeclarePairedDelimiter\ceil{\lceil}{\rceil}
\DeclarePairedDelimiter\floor{\lfloor}{\rfloor}
\DeclarePairedDelimiter\inner{\langle}{\rangle}

\newcommand{\e}{\mathrm{e}}

\newcommand{\Cinf}{C_{\mathrm{inf}}}
\newcommand{\Csup}{C_{\mathrm{sup}}}
\newcommand{\msc}{m_{\mathrm{sc}}}
\DeclareMathOperator{\EX}{\mathbb{E}}%
\DeclareMathOperator{\indicator}{\mathbbm{1}} %
\DeclareMathOperator{\Prob}{\mathbb{P}}%
\DeclareMathOperator{\disSum}{\mathring{\sum}}

\newcommand{\lse}{\mathrm{LSE}}
\newcommand{\func}{\mathcal{L}}
\newcommand{\inFac}{\mathrm{IF}}
\newcommand{\Fasump}{100}

\newcommand{\C}{{\mathbb C}}
\newcommand{\R}{{\mathbb R}}
\newcommand{\ie}{\emph{i.e., }}
\newcommand{\eg}{\emph{e.g., }}
\newcommand{\cf}{\emph{c.f., }}

\numberwithin{equation}{section}

\makeatletter
\@ifundefined{endofdump}{\def\endofdump{}}{}
\makeatother
\endofdump
\makeglossaries

\begin{document}

\begin{center}

	\begin{minipage}{0.85\textwidth}
		\vspace{2.5cm}
		
		\begin{center}
			\large\bf
			Order statistics for edge eigenvectors of Wigner matrices
		\end{center}
	\end{minipage}
\end{center}

\vspace{0.5cm}

\begin{center}
	\begin{minipage}[t]{0.32\textwidth}
		\centering
		{Zhigang Bao} \\
		{\small The University of Hong Kong \\ \texttt{zgbao@hku.hk}}
	\end{minipage}
	\hfill
	\begin{minipage}[t]{0.32\textwidth}
		\centering
		{Teodor Bucht} \\
		{\small KTH Royal Institute of Technology \\ \texttt{tbucht@kth.se}}
	\end{minipage}
	\hfill
	\begin{minipage}[t]{0.32\textwidth}
		\centering
		{Kevin Schnelli} \\
		{\small KTH Royal Institute of Technology \\ \texttt{schnelli@kth.se}}
	\end{minipage}
	
	\vspace{0.6cm}
	
	{\small June 15, 2026}
\end{center}

\vspace{0.8cm}

\begin{center}
	\begin{minipage}{0.85\textwidth}
		\small
		\textbf{Abstract.} 
		In this paper, we establish a general comparison theorem for the order statistics of the edge eigenvectors for generalized Wigner matrices. Consequently, we derive the Gumbel law for the maximal edge eigenvector component and prove the universality of the Gaussian fluctuations of the order statistics in an intermediate regime close to the maximum. In addition, our comparison result also implies a quantitative first order estimate for moderately small order statistics.
	\end{minipage}
\end{center}

\vspace{0.5cm}

\pagenumbering{arabic}

\section{Introduction}
For a vector $(y_i)_{i=1}^n$ of i.i.d.\ standard Gaussians, it is a classic result from extreme value theory that the maximal squared entry, $\max_i y_i^2$, converges in distribution after appropriate rescaling to the Gumbel law. Precisely we have that,
\begin{equation}
	\lim_{n \to \infty} \Prob\left(\frac{\max_{i} y_i^2 - 2 \log n + \log\log n + \log\pi}{2} \leq x\right) = \e^{-\e^{-x}}, \quad x \in \mathbb{R}.
	\label{standard_iid_result}
\end{equation}

For the Gaussian orthogonal ensemble, $\GOE_N$, it follows from orthogonal invariance that individual eigenvectors are uniformly distributed on the unit sphere $\mathbb{S}^{N-1}$. Consequently, the maximum of the squares of the entries of an eigenvector for $\GOE_N$ converges to the Gumbel law. Furthermore, it was shown in \cite{jiang_2005maxima_entries_haar} that the maximal entry of all eigenvectors for the $\GOE_N$ convergence to the Gumbel law.

In light of the above properties it is natural to conjecture that the maximum of the squares of the entries of an eigenvector for generalized Wigner matrices also converges to the Gumbel law. Determining whether the distribution of the maximal entry is universal was stated as an open question in the survey paper \cite{vu2016_survey}. Our first main result, Theorem \ref{gumbel_convergence_prop}, answers this question for the edge eigenvectors.

There is a considerable body of literature concerning universality of eigenvalue and eigenvector statistics of random matrices. For Wigner matrices, early references on universality of local eigenvalue statistics include \cite{erdos_schlein_yau_2011_universality, tao2010local}, see also \cite{dynamical_approach_rmt} for a comprehensive overview. Along with the development of universality of eigenvalue statistics, it was also proved that the eigenvectors satisfy the sup-norm delocalization, i.e.\ $\sqrt{N} \norm{u_\alpha}_{\infty} \leq C (\log N)^{c}$  with very high probability, see for instance \cite{erdos_schlein_yau_complete_delocalization,vu_kee2015,bloemendal_erdos_knowles_yau_yin_2014,BenigniL.2022Odfg}. Notably, \cite{BenigniL.2022Odfg} showed optimal  delocalization in the sense that for any $\epsilon > 0$, it holds with high probability that $\sqrt{N} \norm{u_\alpha}_{\infty} \leq \sqrt{(2 + \epsilon) \log N}$. For the universality of entry distribution of eigenvectors the first works~\cite{knowles_yin_2013,tao_vu_2012_eigenvectors} showed universality for a finite set of entries of any eigenvector under sufficiently strong moment matching conditions. The moment matching assumptions were removed in the landmark work \cite{BourgadeP.2017TEMF} where the dynamical approach used to show universality of eigenvalue statistics was extended to the study of eigenvectors through the eigenvector moment flow. For further developments we also refer to~\cite{yau_marcinik_2022_noisy_eigenvectors, cipolloni_erdos_schroder_2022, cipolloni_henheik_kolupaiev_2023}. Very recently, \cite{benigni2026quantitativeeigenvectoruniversalitygeneralized} showed a quantitative universality result for the eigenvectors of generalized matrices, through a novel analysis of the Dyson vector flow.

The sup-norm delocalization results mentioned above bound the amount of mass that can be carried by a single eigenvector entry. A different type of delocalization is the so called \textit{no-gaps delocalization} introduced in \cite{RudelsonVershynin2016}. The no-gaps delocalization instead gives a bound on how spread out the mass of the eigenvectors are. More specifically, no-gaps delocalization results bound the following quantity from below,  \begin{equation}
	\min_{\substack{ I \subset [[1, N]] \\ \#_I = \floor{(1 - c)N}}} \sum_{i \in I} \abs{u_{\alpha}(i)}^2,
	\label{no_gap_delocalization}
\end{equation} where $u_\alpha$ is any eigenvector, $I$ is a subset of indices of size $\#_I$, and $c$ is a parameter that can either be constant or $N$-dependendent converging to one. In \cite{RudelsonVershynin2016} lower bounds for \eqref{no_gap_delocalization} are proven in the regime $1 - c \geq 1/N$.

In \cite{vu2016_survey} it is shown that the convergence in distribution results from \cite{BourgadeP.2017TEMF} imply first order convergence for \eqref{no_gap_delocalization} for a constant $c$. Further from the results in~\cite{BourgadeP.2017TEMF} one can derive first order convergence of the order statistics of the $\ceil{c N}$-th largest entry of $u_\alpha$, denoted $(N \abs{u_\alpha}^2)_{(\ceil{c N})}$. Our second main result, Theorem \ref{no_gap_deloc_prop} gives a quantitative convergence result for $(N \abs{u_\alpha}^2)_{(\ceil{c N})}$ for edge eigenvectors $u_{\alpha}$, albeit at a very small rate, from which we also get a quantitative no-gaps delocalization estimate.

Our proof relies on Green function comparison, first introduced in \cite{green_function_comparison_reference}. The strategy used to handle non-homogeneous variances is inspired by \cite{schnelli_xu_generalized_22}. To bound error terms we crucially rely on the local law from \cite{bloemendal_erdos_knowles_yau_yin_2014}. We use a smoothed order statistics functional that compares all the entries simultaneously. The smoothed functional we use is introduced in Section \ref{smoothed_order_statistics_section}, and is the quantile of a smoothed empirical cdf with the sigmoid as kernel. This functional allows us to compare any order statistics in our comparison theorem. In particular, we can obtain the limiting fluctuations of intermediate order statistics, see \eqref{intermediate_fluctuation_universality}.

\subsection{Notation}
Inequalities involving $N$-dependent quantities are assumed to hold for sufficiently large $N$. We denote the $l_1$ norm of vectors $u \in \mathbb{R}^k$ as $\norm{u}_1 = \sum_{i=1}^k \abs{u(i)}$. For a multi-index $\beta$ we will use $\#_{\beta}$ to denote its length, and for a finite set $A$ we shall denote its cardinality as $\#_{A}$. We frequently use $\sum_\mathcal{I}$ to denote a sum with index set $\{i_1, \ldots, i_{\#_{\mathcal{I}}}\} = \mathcal{I}$ summed from $1$ to $N$. We denote the part of $\sum_{\mathcal{I}}$ where all summation indices assume distinct values as $\disSum_{\mathcal{I}}$. We denote the set of non-negative integers as $\mathbb{N}$. For a square $N \times N$ matrix $A$, we denote its normalized trace $\frac{1}{N} \Tr A$ as $\underline{A}$. We denote the canonical basis of $\R^n$ by $(e_i)_{i=1}^n$. 

We shall use the following notion of stochastic domination. Properties of stochastic domination can be found in \cite[Proposition 6.5]{dynamical_approach_rmt}.

\begin{definition}
	Let $X = X(N)$ and $Y = Y(N)$ be two sequences of non-negative random variables. We say that $Y$ \textit{stochastically dominates} $X$, if for all $\epsilon > 0$ and $D > 0$ we have \begin{equation*}
		\Prob(X(N) > N^{\epsilon} Y(N)) \leq N^{-D},
	\end{equation*} for all $N \geq N_0(\epsilon, D)$. We denote this as $X \prec Y$ or $X = O_{\prec}(Y)$.
	\label{stochastic_domination_def}
\end{definition}
Note that the $N_0(\epsilon, D)$ in the above definition is allowed to depend on $X$ and $Y$. We will point out in the text when \eg one $N_0(\epsilon, D)$ holds uniformly for all $X$ and $Y$ satisfying a certain bound. If we have an additional time parameter in our $X$ and $Y$, say $X = X(t, N)$ and $Y = Y(t, N)$, then when we say that a stochastic domination holds uniformly in \eg $t \in \mathbb{R}_{\geq 0}$ we mean that for all $\epsilon > 0$ and $D > 0$ there exists $N_0 = N_0(\epsilon, D, X, Y)$ such that for $N \geq N_0$, \begin{equation*}
	\sup_{t \in \R_{\geq 0}} \Prob(X(t, N) > N^{\epsilon} Y(t, N)) \leq N^{-D}.
\end{equation*}

\subsection{Assumptions and main results}
In this paper we consider \textit{generalized Wigner matrices}.
\begin{definition}
	We call an $N \times N$ symmetric random matrix generalized Wigner if it satisfies \begin{enumerate}
		\item The elements $(h_{ij})_{i \geq j}$ are independent random variables with $\EX[h_{ij}] = 0$. 
		\item The variances of $h_{ij}$ are given by the variance profile matrix $V = (V_{ij})_{1 \leq i, j \leq N}$, where $V_{ij} = \EX[h_{ij}^2]$. We require $V$ to satisfy the normalization condition \begin{equation}
			\sum_{i = 1}^N V_{ij} = 1, \quad 1 \leq j \leq N. 
			\label{sum_condition}
		\end{equation} Further, we assume there exist two constants $\Cinf, \Csup > 0$ independent of $N$ such that \begin{equation}
			\Cinf \leq \inf_{i,j} \{N V_{ij}\} \leq \sup_{i,j} \{N V_{ij}\} \leq \Csup.
			\label{mean_field_condition}
		\end{equation} 
		\item All moments of $\sqrt{N} h_{ij}$ are uniformly bounded, \ie for any $k \geq 3$ there exists $C_k$ independent of $N$ such that for all $1 \leq i, j \leq N$, \begin{equation}
			\EX[(\sqrt{N} h_{ij})^k] \leq C_k.
			\label{moment_condition}
		\end{equation}
	\end{enumerate}
	\label{general_wigner_def}
	\end{definition}

Throughout the paper, $H$ will be a generalized Wigner matrix. Denote the ordered eigenvalues of $H$ as \\ $\lambda_1 \leq \dots \leq \lambda_N$, and the corresponding eigenvectors $u_{1}, \ldots, u_{N}$. In this paper we are concerned with the behavior of the eigenvectors corresponding to the eigenvalues at the edge of the spectrum. Further, our results are isotropic in the sense that they hold for the eigenvectors in an arbitrary basis. More precisely, we have the following definitions. 

\begin{definition}
Let $Q = Q(N)$ be a deterministic sequence of arbitrary orthogonal matrices. Denote the row vectors of $Q$ by $q_1, \ldots, q_N$.
\end{definition}

The eigenvectors of $Q H Q^T$ are $Q u_1, \ldots, Q u_N$, and for an index $\alpha \in [[1, N]]$ the entries of $Q u_{\alpha}$ are precisely $\inner{q_1, u_{\alpha}}, \ldots, \inner{q_N, u_{\alpha}}$. Next we define the set of products of entries that our results hold for. 

\begin{definition}
Let $\epsilon > 0$, and $J = J(N, \epsilon)$ be a sequence of index sets,
\begin{equation}
    J \subset \{(\alpha, i, j) : 1 \leq i, j \leq N,  1 \le \alpha \le \floor{N^{\epsilon/10}} \},
	\label{J_def_eq}
\end{equation} corresponding to products of entries of the edge eigenvectors. We assume $\#_{J} \to \infty$ as $N \to \infty$. Further, let $E_J = E_J(N)$ be the set, 
\begin{equation}
  E_J := \{N \inner{u_{\alpha}, q_i} \inner{u_{\alpha}, q_j} : (\alpha, i,j) \in J\}.
\end{equation}
 Let $(N \inner{u_{\alpha}, q_i} \inner{u_{\alpha}, q_j})_{(k)}$ denote the $k$-th largest entry of $E_J$, \ie $(N \inner{u_{\alpha}, q_i} \inner{u_{\alpha}, q_j})_{(1)}$ denotes the largest entry of $E_J$, and $(N \inner{u_{\alpha}, q_i} \inner{u_{\alpha}, q_j})_{(\#_J)}$ the smallest. 
\end{definition}

Our first result is the following Theorem, that states that the largest squared entry of an edge eigenvector of $H$ is Gumbel distributed in the limit, and also that the entries of $E_J$ (properly rescaled) tend to a Poisson point process. Furthermore, it also gives a result on the intermediate order statistics, \ie that the limiting fluctuations of the $k$-th largest entry are Gaussian, when $k \to \infty$ and $k / \#_J \to 0$. 

\begin{theorem}[Universality of largest and intermediate order statistics for edge eigenvectors]
	Let $\epsilon > 0$ and choose $J$ such that it does not contain any cross terms, \ie \begin{equation*}
	J \subset \{(\alpha, i, i) : 1 \leq i \leq N,  1 \le \alpha \le \floor{N^{\epsilon/10}}\},
	\end{equation*} and let, \begin{equation*}
	 b_n = 2 \log n - \log \log n - \log \pi, \quad n \in \mathbb{N}.
	\end{equation*}
	Then there exists a $\delta > 0$ such that for $\epsilon < \delta$, the largest entry of $E_J$ converges in law to a Gumbel distribution, \begin{equation}
	\lim_{N \to \infty }\Prob\left(\frac{\max_{(\alpha,i,i) \in J} N |\inner{u_{\alpha},q_i}|^2 - b_{\#_{J}}}{2} \leq x\right) = \e^{-\e^{-x}}, \quad \forall x \in \R.
  \label{gumbel_convergence_result}
	\end{equation} 
	Furthermore, the point process of the rescaled entries of $E_J$ converges to the Poisson process with intensity measure $\e^{-x} \di x$, \ie for all non-negative continuous functions $g : \R \to \R$ with compact support we have, \begin{equation}
		\lim_{N \to \infty } \EX\Bigg[\exp \Big(- \sum_{(\alpha, i, i) \in J} g\Big(\frac{1}{2}(N |\inner{u_{\alpha}, q_i}|^2 - b_{\#_{J}})\Big) \Big)\Bigg] = \exp \Big(- \int_{\R} \big(1 - \e^{-g(x)}\big) \e^{-x} \di x \Big).
		\label{point_process_convergence_result}
	\end{equation} 
	Finally, let $F_{\chi^2(1)}$ and $f_{\chi^2(1)}$ denote the cdf and pdf of the $\chi^2(1)$ distribution, and let $\Phi$ denote the cdf of a standard Gaussian. For $k = k(N) \leq N^{\epsilon/2}$, satisfying $k \to \infty$ and $k / \#_{J} \to 0$ as $N \to \infty$, we have with, \begin{equation*}
		d_{\#_J} := F_{\chi^2(1)}^{-1}\left(1 - \frac{k}{\#_J}\right), \quad c_{\#_J} := \frac{\sqrt{k}}{\#_J f_{\chi^2(1)}(d_{\#_J})},
	\end{equation*} that,
	\begin{equation}
		\lim_{N \to \infty} \Prob \left( \frac{(N|\inner{u_{\alpha}, q_i}|^2)_{(k)} - d_{\#_J}}{c_{\#_J}} \leq x \right) = \Phi(x), \quad \forall x \in \R.
		\label{intermediate_fluctuation_universality}
	\end{equation}
	\label{gumbel_convergence_prop}
\end{theorem}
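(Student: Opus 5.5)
The proof reduces the theorem to the Gaussian orthogonal ensemble, where it becomes classical extreme value theory for weakly dependent Gaussians. The reduction is the comparison theorem announced in the introduction. It controls the smoothed order statistics functional of Section \ref{smoothed_order_statistics_section}: the quantile of the smoothed empirical cdf
\[
x \mapsto \frac{1}{\#_J}\sum_{(\alpha,i,i)\in J} \sigma\bigl((x - N\inner{u_\alpha,q_i}^2)/\eta\bigr),
\]
with $\sigma$ the sigmoid and $\eta = N^{-c}$. The theorem states that for smooth bounded test functions $F$, the expectation of $F$ applied to finitely many such smoothed quantiles, or to smoothed linear statistics $\sum_J g(\cdot)$, coincides for $H$ and for $\GOE_N$ up to $o(1)$, uniformly over $J$ of the stated type when $\epsilon<\delta$. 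I would assume it in this form. Sandwiching sharp indicators between smoothed ones with shifted thresholds $x\pm N^{-c/2}$ then transfers \eqref{gumbel_convergence_result}, \eqref{point_process_convergence_result} and \eqref{intermediate_fluctuation_universality} from $\GOE_N$ to $H$. Here I use that the limiting distributions are continuous, so threshold shifts of size $o(1)$ cost nothing. For \eqref{point_process_convergence_result}, $g$ can be approximated by smooth compactly supported functions, and $\exp(-\sum g)$ is itself a smooth function of the smoothed counts.

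For $\GOE_N$ I use $QHQ^T \overset{d}{=} H$, so that $Q$ may be taken to be the identity. The matrix $(u_1,\dots,u_N)$ is Haar on $O(N)$ and independent of the eigenvalues. Only the first $K=\floor{N^{\epsilon/10}}$ columns are involved, and I would realize them as the Gram--Schmidt orthonormalization of an $N\times K$ matrix $Y$ with i.i.d.\ standard Gaussian entries. Since $Y^TY/N = I + O_\prec(\sqrt{K/N})$, this gives
\[
\sqrt{N}u_\alpha(i) = y_{i\alpha} + O_\prec\bigl(K N^{-1/2}\bigr)\,\max_{i,\alpha}|y_{i\alpha}|
\]
uniformly in $i,\alpha$. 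The error is $O_\prec(N^{-1/2+\epsilon/5})$, and after squaring it is still $o(1/\log N)$. The entries $N u_\alpha(i)^2$ are therefore uniformly close to the i.i.d.\ $\chi^2(1)$ variables $y_{i\alpha}^2$, indexed by $J$, of which there are $\#_J\to\infty$.

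The i.i.d.\ statements are classical. The rescaled family $\{(y^2_{i\alpha}-b_{\#_J})/2\}$ converges to the Poisson process with intensity $\e^{-x}\di x$, which is the Laplace functional computation $(1-\#_J^{-1}\int(1-\e^{-g})\e^{-x}\di x\,(1+o(1)))^{\#_J}$; the Gumbel law \eqref{gumbel_convergence_result} is the case $g=\infty\cdot\indicator_{(x,\infty)}$. The intermediate order statistic satisfies \eqref{intermediate_fluctuation_universality} by the standard argument: $\{(y^2)_{(k)}\le d+c x\}$ is the event that a $\mathrm{Bin}(\#_J,p)$ variable is $<k$, with $p = 1-F_{\chi^2(1)}(d+cx) = (k - x\sqrt{k}(1+o(1)))/\#_J$, and the de Moivre--Laplace theorem applies. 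The perturbation $o(1/\log N)$ does not matter for the Gumbel scaling of order one. For the intermediate regime one needs the perturbation to be $o(c_{\#_J})$, where $c_{\#_J}\asymp 1/\sqrt{k}$ because $f_{\chi^2(1)}(d)\approx k/(2\#_J)$; since $k\le N^{\epsilon/2}$, the bound $N^{-1/2+\epsilon/5}\log N \ll N^{-\epsilon/4}$ suffices.

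The main obstacle is the smoothing step in the intermediate regime. The smoothing width $\eta$ and the comparison error must both be $o(c_{\#_J})\approx o(k^{-1/2})$. The quantile of the smoothed cdf also differs from the true order statistic, because about $k$ points sit near the level $d$, and the sigmoid tails contribute an error of roughly $\eta$ times the local density. So I would take $\eta = N^{-c}$ with $c>\epsilon/4$. I would then check, via the local law bounds that the comparison theorem presumably quantifies, that the Green function comparison error remains $N^{-c'}$ with $c'>0$, and that the $\eta^{-p}$ derivative losses are absorbed. This is possible provided $\epsilon<\delta$, which is where the smallness of $\epsilon$ is really used.
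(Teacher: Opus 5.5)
Apart from one step, your route is the paper's. You compare with $\GOE_N$ through the sigmoid-smoothed quantile, use Haar invariance to take $Q=I$, couple the $\GOE$ edge eigenvectors to i.i.d.\ Gaussians, and finish with classical extreme value theory. Your Gram--Schmidt coupling of the first $\floor{N^{\epsilon/10}}$ Haar columns is a valid elementary substitute for the paper's citation of Jiang's theorem, with a better error ($N^{-1/2+\epsilon/5}$ instead of $N^{-1/8+\epsilon}$). The worry in your last paragraph is settled by the deterministic bound $\abs{t^*_{\tau,k}(\bar x)-x_{(k)}}\le\tau\log(2n)$ of Lemma \ref{t_star_lemma}. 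With $\tau=N^{-\epsilon}$ and $1/c_{\#_J}\lesssim\sqrt{k}\le N^{\epsilon/4}$, no local-density heuristic is needed. Your Gumbel and intermediate arguments involve a single order statistic and go through as you describe.

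The gap is in the point process statement \eqref{point_process_convergence_result}. You assume the comparison also covers \emph{smoothed linear statistics} $\sum_J g(\cdot)$, or smooth functions of smoothed counts, but nothing available gives this. Theorem \ref{main_result} requires $\sum_{i_1,\dots,i_m}\abs{\partial_{i_1}\cdots\partial_{i_m}F}\le C_mN^{m\epsilon/4}$ uniformly in $x$, and its proof rests on the uniform $\ell^1$-derivative hypothesis \eqref{func_assumptions}.

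\begin{itemize}
\item The Laplace functional $\exp(-\sum_{k=1}^{\#_J}g(\frac12(x_{(k)}-b_{\#_J})))$ depends on all $\#_J$ order statistics. Its $\ell^1$-gradient is of the order of the number of points in $\supp g'$, which is not uniformly bounded.
\item A smoothed count $\sum_J\sigma((x_j-t)/\eta)$ has $\ell^1$-gradient equal to (number of points near $t$)$/\eta$, so it has the same defect.
\item Controlling that number along the interpolation is essentially the tightness you are trying to prove, so it cannot simply be assumed.
\end{itemize}

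The missing idea is a truncation step.

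\begin{itemize}
\item Take $\supp g\subset(-\widehat C,\infty)$ and $\widehat k=\floor{\log N}+1$.
\item Show $\Prob\big((N\abs{\inner{u_\alpha,q_i}}^2)_{(\widehat k)}>b_{\#_J}-\widehat C\big)\le N^{-c}$. Do this by applying the comparison to the single order statistic $\widehat k$ with a smooth step function, then the Gaussian coupling, then Bennett's inequality for the binomial number of i.i.d.\ points $\frac12(y^2-b_{\#_J})$ in $(-(1+\widehat C),\infty)$.
\item Off this event, the Laplace functional equals its truncation to the top $\floor{\log N}$ order statistics.
\item The truncated functional $F(x_1,\dots,x_K)=\exp(-\sum_{l\le K}g(\frac12(x_l-b_{\#_J})))$ with $K\le\log N$ does satisfy \eqref{F_deriv_assumptions_eq}, so the comparison applies to it.
\end{itemize}

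A fixed truncation level $m$, sent to infinity after $N\to\infty$ using tightness in the Gaussian model, would also work.
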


\begin{remark}
	The reason we are not able to provide an explicit lower bound for $\delta$ in Theorem \ref{gumbel_convergence_prop} is that we lack an explicit eigenvalue repulsion estimate that bounds the probability that eigenvalues at the edge are closer than $N^{-2/3 - \epsilon}$ for some explicit $\epsilon > 0$. If we had such a level repulsion result, we believe that our proof would give \eqref{intermediate_fluctuation_universality} for $k \leq N^{c}$ for some explicit $c > 0$. 
\end{remark}

Our second result is a quantitative law of large numbers for general order statistics of the edge eigenvectors' entries. 

\begin{theorem}[Law of large numbers for order statistics and no gap delocalization]
	Let $\epsilon > 0$ and choose $J$ to be $\{(\alpha, i, i), 1 \leq i \leq N\}$ for some $\alpha = \alpha(N) < N^{\epsilon/10}$. Let $\mathfrak{c} \in (0, 1]$ be a constant, and denote the cdf of the $\chi^2(1)$-distribution by $F_{\chi^2(1)} : \R_{\geq 0} \to [0, 1]$. The $\mathfrak{c}$-quantile is given by,
	\begin{equation*}
		u^{\star}_{\mathfrak{c}} := F_{\chi^2(1)}^{-1}(1 - \mathfrak{c}). 
	\end{equation*}	
	Then there exists a $\delta > 0$ such that for $\epsilon < \delta$ and sufficiently large $N$, with probability $\geq 1 - N^{-\epsilon/2}$, 
	\begin{equation}
		(N \abs{\inner{u_{\alpha},q_i}}^2)_{(\ceil{ \mathfrak{c}N})} = u^{\star}_{\mathfrak{c}} + O(N^{-\epsilon/4}).
		\label{order_statistic_concentration}
	\end{equation}
	A direct consequence of \eqref{order_statistic_concentration} is the following quantitative convergence: With probability $\geq 1 - N^{-\epsilon/4}$, 
	\begin{equation}
		\inf_{\substack{I \subset [[1, N]] \\ \#_{I} = \floor{(1-\mathfrak{c})N}}} \sum_{i \in I} \abs{\inner{u_{\alpha}, q_i}}^2 = \int_{\mathfrak{c}}^1 u^{\star}_{c} \di c + O(N^{-\epsilon/4}).
		\label{limit_of_smallest_sub_vector}
	\end{equation}
	\label{no_gap_deloc_prop}
\end{theorem}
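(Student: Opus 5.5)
The plan is to reduce Theorem \ref{no_gap_deloc_prop} to the GOE, where it becomes an elementary statement about i.i.d.\ Gaussians, and then to transfer it to $H$ by the general comparison theorem for smoothed order statistics announced in the introduction (Section \ref{smoothed_order_statistics_section}).

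\emph{Gaussian case.} For $\GOE_N$ the vector $u_\alpha$ is uniform on $\mathbb{S}^{N-1}$ and independent of the eigenvalues. The same holds for $Qu_\alpha$, so we may write $N\abs{\inner{u_\alpha,q_i}}^2 = N y_i^2/\norm{y}_2^2$ with $y$ a standard Gaussian vector. We have $\norm{y}_2^2/N = 1+O_\prec(N^{-1/2})$, and the empirical cdf of the $y_i^2$ converges to $F_{\chi^2(1)}$ at rate $N^{-1/2+o(1)}$ by the DKW inequality. Since $f_{\chi^2(1)}$ is bounded below near $u^\star_{\mathfrak{c}}$ for fixed $\mathfrak{c}\in(0,1]$, the $\ceil{\mathfrak{c}N}$-th order statistic equals $u^\star_{\mathfrak{c}}+O(N^{-1/2+o(1)})$ with overwhelming probability. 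For $\mathfrak{c}=1$ one has $u^\star_1=0$ and the minimum is trivially small. This gives \eqref{order_statistic_concentration} for GOE, and in fact much more precisely.

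\emph{Comparison.} I would use the smoothed quantile functional: let $\sigma$ be the sigmoid and $\eta = N^{-\epsilon/4}$ a bandwidth. Set
\[
\widehat F(u)=\frac1N\sum_i \sigma\bigl((u-N\abs{\inner{u_\alpha,q_i}}^2)/\eta\bigr),
\]
and let $\widehat u_{\mathfrak c}$ be the solution of $\widehat F(u)=1-\mathfrak c$. Deterministically, $\widehat u_{\mathfrak c}$ differs from the true order statistic by $O(\eta\log N)$, up to shifts of $\mathfrak{c}$ by $O(\eta)$, and these shifts change $u^\star$ by $O(\eta)$ via the density bound. Then take a smooth test function $\theta$ of $\widehat u_{\mathfrak c}$ that is $1$ on $[u^\star-\eta', u^\star+\eta']$ and vanishes outside a slightly larger window. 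The comparison theorem should give $|\EX^H\theta-\EX^{\GOE}\theta| \le N^{-c\epsilon}$, where the derivatives of $\theta$ and of the sigmoid cost powers $\eta^{-k}$ that are absorbed because $\epsilon<\delta$. This yields the probability bound $1-N^{-\epsilon/2}$. The matching of generalized variances to GOE follows the interpolation strategy of \cite{schnelli_xu_generalized_22} already built into that theorem, so I would simply invoke it.

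\emph{No-gaps consequence.} The infimum over $I$ of size $\floor{(1-\mathfrak c)N}$ is the sum of the smallest entries, which is
\[
\frac1N\sum_{k>N-\floor{(1-\mathfrak c)N}} (N\abs{\inner{u_\alpha,q_i}}^2)_{(k)}.
\]
Apply \eqref{order_statistic_concentration} on a grid of $O(N^{\epsilon/4})$ values $c\in[\mathfrak c,1]$ with spacing $N^{-\epsilon/4}$. By monotonicity of order statistics and of $c\mapsto u^\star_c$, this sandwiches the Riemann sum and gives $\int_{\mathfrak c}^1 u^\star_c\,dc+O(N^{-\epsilon/4})$. A union bound is not affordable at probability $N^{-\epsilon/2}$ per point with $N^{\epsilon/4}$ points, since that only leaves $1-N^{-\epsilon/4}$; this is exactly the stated probability, so the bookkeeping fits.

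The main obstacle is the comparison step: the functional depends on all $N$ entries at once, and its derivatives in $h_{ij}$ must be controlled via resolvent representations $N\abs{\inner{u_\alpha,q_i}}^2\approx \frac{N\eta_0}{\pi}\int \im\inner{q_i,G(E+\ii\eta_0)q_i}$ over a window isolating $\lambda_\alpha$. Isolating $\lambda_\alpha$ requires level repulsion at scale $N^{-2/3-\epsilon}$, which holds only with probability $1-N^{-c\epsilon}$ and is the source of the non-explicit $\delta$. I would take this from the comparison theorem rather than reprove it.
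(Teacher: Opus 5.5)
Your plan is correct and is essentially the paper's argument. The paper applies Theorem \ref{main_result} directly with $K=1$, $k_1=\ceil{\mathfrak c N}$ and $F(x)=f(N^{\epsilon/4}(x-u^\star_{\mathfrak c}))$, a smooth cutoff whose derivatives grow at exactly the permitted rate $C_mN^{m\epsilon/4}$, and this is what gives the $N^{-\epsilon/2}$ error. It then reduces the GOE side to i.i.d.\ Gaussians via Jiang's coupling and Hoeffding. Your exact representation $Ny_i^2/\norm{y}_2^2$ with DKW is a valid substitute there, and for a single $\alpha$ a more direct one. The paper finishes with the same monotone Riemann-sum sandwich and union bound over $N^{\epsilon/4}$ grid points.

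The one change you should make is to drop your own sigmoid quantile $\widehat u_{\mathfrak c}$. Theorem \ref{main_result} already accepts a test function of the true order statistic; the smoothing with $\tau=N^{-\epsilon}$ happens inside its proof. If instead you feed $\theta\circ\widehat u_{\mathfrak c}$ into Theorem \ref{main_result}, the extra layer at bandwidth $N^{-\epsilon/4}$ costs $N^{-\epsilon/4}\log N$, which is slightly worse than the claimed $O(N^{-\epsilon/4})$. You also cannot narrow that bandwidth: \eqref{F_deriv_assumptions_eq} caps the derivative growth at $N^{m\epsilon/4}$, rather than letting it be absorbed by taking $\epsilon<\delta$.
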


\begin{remark} A non-quantitative convergence of the first order of the order statistics can be derived directly from the joint Gaussian law for finitely many eigenvector components in \cite{BourgadeP.2017TEMF}, as the latter implies the convergence of the empirical measure of the components, which further implies the convergence of the order statistics in the middle. Nevertheless, it is not clear whether a quantitative convergence with an effective rate can be obtained by such a method. Despite being weak, our rate could potentially be further improved along the approach proposed in this paper. We leave this for future study.
\end{remark}

Our main result is the following comparison theorem for order statistics of edge eigenvectors. Both of the above theorems are derived from it.
\begin{theorem}
Let $\epsilon > 0$ be a (small) real number. Recall $J = J(N, \epsilon)$ from \eqref{J_def_eq}, and let $K = K(N) \leq \#_{J}$ be a sequence of natural numbers. Let $F = F_N : \mathbb{R}^K \to \mathbb{R}$ be a sequence of smooth functions satisfying \begin{align}
\abs{F(x)} & \leq C, \nonumber \\
\sum_{1 \leq i_1, \dots, i_m \leq K} \abs{\partial_{i_1} \dots \partial_{i_m} F(x)} & \leq C_m N^{m \epsilon/4}, \label{F_deriv_assumptions_eq}
\end{align} for all $x \in \mathbb{R}^K$ and $m \in \mathbb{N}$, where $C_m$ are $m$-dependendent constants. Further, let $(k_l)_{l=1}^K$ be an ($N$-dependent) list of indices, 
\begin{equation*}
	1 \leq k_1 < \cdots < k_K \leq \#_J.
\end{equation*}
Then there exists a $\delta > 0$ such that for $\epsilon < \delta$ we have, \begin{equation}
	\abs{\EX[F(((N \inner{u_{\alpha}, q_i} \inner{u_{\alpha}, q_j})_{(k_l)})_{l=1}^K)] - \EX^{\GOE}[F(((N \inner{u_{\alpha}, q_i} \inner{u_{\alpha}, q_j})_{(k_l)})_{l=1}^K)]} \leq N^{-\epsilon/2},
	\label{main_result_eq}
\end{equation} for $N \geq N_0(\epsilon, C, (C_m)_{m \in \mathbb{N}})$. The expectation $\EX$ is with respect to a generalized Wigner matrix, and $\EX^{\GOE}$ is with respect to the eigenvectors of the $\GOE_N$ which is equal in law to the Haar measure. 
	\label{main_result}
\end{theorem}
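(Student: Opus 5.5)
We follow the Green function comparison strategy sketched in the introduction. The first step replaces the order statistics by smooth functionals of resolvent entries. For each $(\alpha,i,j)\in J$ we write $N\inner{u_\alpha,q_i}\inner{u_\alpha,q_j}$, up to a small error, as
\[
\frac{N}{\pi}\int_{I_\alpha}\im \inner{q_i, G(E+\ii\eta)q_j}\,\chi_\alpha(E)\,\di E ,
\]
where $\eta=N^{-2/3-\epsilon'}$ lies slightly below the edge spacing. The window $I_\alpha$ is cut out by a smoothed indicator $\chi_\alpha$ whose endpoints are random. Following the standard method of Knowles--Yin, these endpoints are chosen between $\lambda_{\alpha-1},\lambda_\alpha,\lambda_{\alpha+1}$, and the choice is encoded through smooth functions of $\Tr\,\im G$ integrated over intervals. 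The approximation holds off an event of probability $o(N^{-\epsilon/2})$, namely the event that two edge eigenvalues among the first $N^{\epsilon/10}$ are closer than $N^{-2/3-\epsilon''}$. This is exactly where an edge level repulsion estimate enters, and it is the source of the non-explicit $\delta$. Rigidity at the edge and the isotropic local law of \cite{bloemendal_erdos_knowles_yau_yin_2014}, which gives $\inner{q_i,Gq_j}=\msc\delta_{ij}+O_\prec(\cdots)$, control the rest of the error.

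The second step replaces the order statistics themselves by the smoothed order statistics functional of Section~\ref{smoothed_order_statistics_section}. This functional is the quantile of the empirical cdf $\frac{1}{\#_J}\sum_{J}\sigma((x-y_{\alpha ij})/\theta)$, where $\sigma$ is the sigmoid kernel and $\theta=N^{-c}$. By the implicit function theorem, each such quantile is a smooth function of all the entries $y_{\alpha ij}$ at once. Its $m$-th order derivatives, summed over indices, are bounded by powers of $\theta^{-1}$ divided by the local density of entries near the quantile; we need this density to be bounded below, which must be shown.

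The difference between true and smoothed order statistics is $O(\theta)$, except where the entries cluster abnormally. We rule out clustering using bounds on the number of entries in short intervals. We obtain these counts from the comparison itself, applied to simpler counting functionals, or from delocalization. Composing with $F$ and using \eqref{F_deriv_assumptions_eq}, we obtain a smooth function $\Psi$ of the family of resolvent quantities. Its derivatives are bounded by $N^{C\epsilon}$, and
\[
\EX F(\cdots)=\EX\Psi(G)+O(N^{-\epsilon/2}).
\]
The same holds for GOE.

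The third step is the comparison itself. We interpolate continuously, $H_t=\sqrt{1-t}\,H+\sqrt{t}\,H^{\GOE}$; to handle the non-homogeneous variance profile we use the scheme of \cite{schnelli_xu_generalized_22}, matching the variance profile first. We then differentiate $\EX\Psi(G_t)$ and perform a cumulant expansion in the entries $h_{ab}$. The second-order terms cancel because the variances match. Third and higher cumulants carry factors $N^{-(k+1)/2}$ and produce chains of resolvent entries $\inner{q_i,Ge_a}$, $G_{ab}$. By the isotropic local law at $\eta\approx N^{-2/3-\epsilon'}$, the off-diagonal entries are of size $N^{-1/3+C\epsilon}$.

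The main obstacle is the third-order term. Naive power counting gives
\[
N^2\cdot N^{-3/2}\cdot(\text{one off-diagonal factor})\approx N^{1/2-1/3}
\]
multiplied by $N^{C\epsilon}$ losses, which is not small. We plan to gain the missing factors by expanding once more in $h_{ab}$ where an odd number of off-diagonal factors appears, following the unmatched-index arguments of the edge comparison theorems. The sum over $J$ inside the derivative of $\Psi$ costs nothing, because \eqref{F_deriv_assumptions_eq} bounds summed derivatives. Higher cumulants are handled by crude bounds. Integrating over $t$ then gives the comparison error $N^{-c}$ with $c$ independent of $\epsilon$, which yields \eqref{main_result_eq} for $\epsilon<\delta$.
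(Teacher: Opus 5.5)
Your overall architecture matches the paper's. It uses the Knowles--Yin resolvent representation, with edge level repulsion as the source of the non-explicit $\delta$, and a sigmoid-smoothed quantile. The comparison goes through the Gaussian ensemble with the same variance profile, then the recursive $\mathring{V}$-expansion of \cite{schnelli_xu_generalized_22}, with an unmatched expansion of the $\kappa_3$ terms.

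The gap is in your second step. You claim three things:
\begin{itemize}
\item the summed derivatives of the smoothed quantile are controlled by inverse powers of the local density of entries;
\item the smoothed and true order statistics are close only away from abnormal clustering;
\item the density lower bound and the anti-clustering estimate can be obtained from the comparison or from delocalization.
\end{itemize}
That cannot be carried out in the generality of the theorem, for three reasons.
\begin{itemize}
\item Near an extreme order statistic, for instance $k_1=1$ with $J=\{(\alpha,i,i)\}_{i=1}^N$ (the Gumbel case), only $O(1)$ entries lie within $\theta$ of $x_{(1)}$. So the normalized smoothed density $\frac{1}{\#_J\theta}\sum\sigma'(\cdot)$ there is of order $(\#_J\theta)^{-1}\to 0$, not bounded below.
\item $J$ may contain both $(\alpha,i,j)$ and $(\alpha,j,i)$, so entries of $E_J$ can coincide exactly and no anti-clustering bound can hold.
\item Delocalization bounds magnitudes and says nothing about spacings between entries.
\end{itemize}
As written, the reduction of $F$ of the order statistics to a smooth resolvent functional with $N^{C\epsilon}$-controlled derivatives is not established.

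The missing idea is that both properties hold deterministically, for every configuration, once the level is a half-integer. Define $t^*_{\tau,k}(\bar x)$ by $\sum_{i=1}^n\sigma((t-x_i)/\tau)=n-k+\frac12$. In your normalization this is the level $1-(k-\frac12)/\#_J$. Without the offset $\frac12$ the quantile can sit in the middle of a gap between $x_{(k+1)}$ and $x_{(k)}$, which is the real failure mode, not clustering. The two properties then follow.
\begin{itemize}
\item \emph{Approximation.} Using $\sigma(x)\ge 1-\e^{-x}$ and $\sigma(x)\le \e^{x}$, one gets $|t^*_{\tau,k}(\bar x)-x_{(k)}|\le\tau\log(2n)$ for all $\bar x$.
\item \emph{Gradient.} Set $z_i=(t^*_{\tau,k}-x_i)/\tau$. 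The gradient $p_i=\sigma'(z_i)/\sum_j\sigma'(z_j)$ is a probability vector, so the density enters only as a normalization and cancels.
\item \emph{Higher derivatives.} Write $\sigma^{(q+1)}=\sigma' P_q(\sigma)$ and $v_i^{(q)}:=\tau^{-q}P_q(\sigma(z_i))$, so that $|v_i^{(q)}|\le C_q\tau^{-q}$. One checks $\partial_{x_l}p_i=p_ip_l(v_i^{(1)}+v_l^{(1)}-\sum_j p_jv_j^{(1)})-\delta_{il}p_iv_i^{(1)}$ and $\partial_{x_l}v_i^{(q)}=(p_l-\delta_{il})(v_i^{(q+1)}-v_i^{(q)}v_i^{(1)})$. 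By induction, every $l$-th partial derivative is a finite combination of products of $p$'s, $v$'s and Kronecker deltas of total $v$-order $l-1$. Hence $\sum_{j_1,\dots,j_l}|\partial_{j_1}\cdots\partial_{j_l}t^*_{\tau,k}|\le C_l\tau^{1-l}$, uniformly in $n$, $k$ and $\bar x$.
\end{itemize}
With $\tau=N^{-\epsilon}$, the smoothing error is surely at most $C N^{\epsilon/4}\tau\log(2\#_J)$. The composition $F\circ(t^*_{\tau,k_l})_{l=1}^K$ then satisfies the $\ell^1$ derivative hypotheses of the comparison theorem, and no probabilistic input is needed at this step.
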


The proof is presented in Section \ref{smoothed_order_statistics_section}.

\subsection{Proofs of Theorems \ref{gumbel_convergence_prop} and \ref{no_gap_deloc_prop}}
In this section we deduce Theorems \ref{gumbel_convergence_prop} and \ref{no_gap_deloc_prop} from Theorem \ref{main_result}.

\begin{proof}[Proof of Theorem \ref{gumbel_convergence_prop}]
	Let $\delta > 0$ be the constant given by Theorem \ref{main_result}. We shall show \eqref{gumbel_convergence_result} and \eqref{point_process_convergence_result} for $\epsilon$ with $0 < \epsilon < \delta$.
	
	For i.i.d. standard Gaussians $(y_i)_{i=1}^n$ we have from classic extreme value theory (see \eg Table 3.4.4 in \cite{Embrechts1997}) that,
	\begin{equation}
		\Prob\Big(\frac{1}{2} (\max_{1 \leq i \leq n} y_i^2 - b_n) \leq x \Big) \to \e^{-\e^{-x}}, \quad \forall x \in \R. 
		\label{iid_gumbel_conv}
	\end{equation} In other words, $\chi^2(1)$ belongs to the maximum domain of attraction of the Gumbel distribution. This, in particular implies that the binomial point process, \begin{equation*}
		\sum_{i=1}^{n} \delta_{\frac{1}{2} (y_{i}^2 - b_n)},
	\end{equation*} converges in distribution to a Poisson point process with intensity measure $\e^{-x} \di x$,
	\begin{equation}
		\lim_{N \to \infty} \EX \Bigg[\exp \Big(- \sum_{i=1}^{n} g\Big(\frac{1}{2} (y_i^2 - b_n)\Big)\Big) \Bigg] = \exp \Big(- \int_{\R} \big(1 - \e^{-g(x)}\big) \e^{-x} \di x \Big).
		\label{iid_laplace_conv}
	\end{equation} For a reference, see Chapter 2 and Appendix A of \cite{mikosch_wintenberger_2024}. 

	By Theorem 5 from \cite{jiang_2005maxima_entries_haar}, we have that the entries of the edge eigenvectors of the $\GOE$ can be estimated in high probability by i.i.d. Gaussians. In particular let $W_N$ be  drawn from $\GOE_N$. Denote the eigenvalues of $W_N$ by $\lambda_1(W_N) \leq \cdots \leq \lambda_{N}(W_N)$, and the corresponding eigenvectors $u_1(W_N), \dots, u_N(W_N)$. Then \cite{jiang_2005maxima_entries_haar} gives that there exists $(y_{i \alpha})_{1 \leq i \leq N, 1 \leq \alpha \leq \floor{N^{\epsilon/10}}}$ (on the same probability space as $W_N$), that have marginal distributions of standard i.i.d. Gaussians, and for any $D > 0$ we have, \begin{equation}
		\max_{\substack{1 \leq i \leq N \\ 1 \leq \alpha \leq \floor{N^{\epsilon/10}}}}|N ((u_{\alpha}(W_N))_{i})^2 - y_{i \alpha}^2| \leq N^{-1/8 + \epsilon},
		\label{iid_good_approx}
	\end{equation} with probability $\geq 1 - N^{-D}$. 

	Now we are ready to show \eqref{gumbel_convergence_result}. Let $\widehat{F}$ be a bounded smooth function with all derivatives globally bounded. We use Theorem \ref{main_result} with $F(\cdot) = \widehat{F}(\frac{1}{2} (\cdot - b_{\#_{J}}))$, $K \equiv 1$, $k_1 \equiv 1$. Since $K$ is bounded, \eqref{F_deriv_assumptions_eq} is satisfied. So from \eqref{main_result_eq} we have that \begin{equation}
		\lim_{N \to \infty} \abs{\EX\Big[\widehat{F}\Big(\frac{1}{2}((N|\inner{u_{\alpha}, q_i}|^2)_{(1)} - b_{\#_{J}})\Big)\Big] - \EX\Big[\widehat{F}\Big(\frac{1}{2}((N|\inner{u_{\alpha}(W_N), q_i}|^2)_{(1)} - b_{\#_{J}})\Big)\Big]} = 0.
		\label{main_proof_eq1}
	\end{equation} Next, we note that from the invariance of the Haar measure we have,
	 \begin{equation}
	\EX\Big[\widehat{F}\Big(\frac{1}{2}((N|\inner{u_{\alpha}(W_N), q_i}|^2)_{(1)} - b_{\#_{J}})\Big)\Big] = \EX\Big[\widehat{F}\Big(\frac{1}{2}((N|\inner{u_{\alpha}(W_N), e_i}|^2)_{(1)} - b_{\#_{J}})\Big)\Big].
	\label{main_proof_eq2}
	\end{equation} Now we estimate the $u_{\alpha}(W_N)$ by i.i.d. Gaussians. Applying \eqref{iid_good_approx}, and using Lipschitz continuity of $\widehat{F}$ we have that,
	\begin{equation}
		\lim_{N \to \infty} \abs{\EX\Big[\widehat{F}\Big(\frac{1}{2}((N|\inner{u_{\alpha}(W_N), e_i}|^2)_{(1)} - b_{\#_{J}})\Big)\Big] - \EX\Big[\widehat{F}\Big(\frac{1}{2}((y_{i \alpha}^2)_{(1)} - b_{\#_{J}})\Big)\Big]} = 0, 
		\label{main_proof_eq3}
	\end{equation} where $(y_{i \alpha}^2)_{(1)}$ denotes $\max_{(\alpha, i, i) \in J} y_{i \alpha}^2$. Finally, by choosing a sequence of $\widehat{F}$ that approximates the indicator $\indicator (\cdot \leq x)$, we may combine \eqref{main_proof_eq1} -- \eqref{main_proof_eq3} with \eqref{iid_gumbel_conv} to obtain \eqref{gumbel_convergence_result}. 

	Next we will show the intermediate fluctuation result \eqref{intermediate_fluctuation_universality}. We begin by noting that for i.i.d. standard Gaussians $(y_i)_{i=1}^{\#_J}$ we have that for $k \to \infty$ and $k / \#_J \to 0$ as $N \to \infty$, 
	\begin{equation}
	\lim_{N \to \infty} \Prob\left(\frac{(y_i^2)_{(k)} - d_{\#_J}}{c_{\#_J}} \leq x\right) = \Phi(x). 
	\label{iid_intermediate_result}
	\end{equation} 
	Similar to the $\max$ case, we again let $\widehat{F}$ be a bounded smooth function with all derivatives globally bounded. We then use Theorem \ref{main_result} with $F(\cdot) = \widehat{F}(\frac{1}{c_{\#_J}}(\cdot - d_{\#_J}))$, $K \equiv 1$, and $k_1 = k$. Using the formulas,
	\begin{equation*}
		f_{\chi^2(1)}(x) = \frac{1}{\sqrt{2 \pi x}} \e^{-x/2}, \quad F_{\chi^2(1)}(x) = 2 \Phi(\sqrt{x}) - 1, \quad x > 0,
	\end{equation*}	
	along with the standard limit, \begin{equation*}
		\lim_{x \to \infty} \frac{x (1 - \Phi(x))}{\Phi'(x)} = 1,
	\end{equation*} one can derive the following asymptotics for $c_{\#_J}$,
	 \begin{equation}
		\lim_{N \to \infty} \frac{\frac{2}{\sqrt{k}}}{c_{\#_J}} = 1. 
		\label{c_asymptotics}
	\end{equation} From \eqref{c_asymptotics}, $K = 1$, and that we restricted $k \leq N^{\epsilon/2}$, we see that $F$ satisfies \eqref{F_deriv_assumptions_eq}. Then we can just proceed as in \eqref{main_proof_eq1}--\eqref{main_proof_eq3} but use \eqref{iid_intermediate_result} instead of \eqref{iid_gumbel_conv}, and in the end choose a sequence of $\widehat{F}$ approximating $\indicator(\cdot \leq x)$. This finishes the proof of \eqref{intermediate_fluctuation_universality}.  

	The remainder is to show \eqref{point_process_convergence_result}. We show \eqref{point_process_convergence_result} under the additional assumption that $g$ is smooth. The general case is then obtained by a standard approximation argument. We rewrite the Laplace functional of \begin{equation*}
		\sum_{(\alpha, i, i) \in J} \delta_{\frac{1}{2}(N |\inner{u_{\alpha}, q_i}|^2 - b_{\#_{J}})},
	\end{equation*} as,
 \begin{equation*}
		\EX\Bigg[\exp \Big(- \sum_{(\alpha, i, i) \in J} g\Big(\frac{1}{2}(N |\inner{u_{\alpha}, q_i}|^2 - b_{\#_{J}})\Big) \Big)\Bigg] = \EX\Bigg[\exp \Big(- \sum_{k = 1}^{\#_{J}} g\Big(\frac{1}{2}((N |\inner{u_{\alpha}, q_i}|^2)_{(k)} - b_{\#_{J}})\Big) \Big)\Bigg]. 
	\end{equation*} Theorem \ref{main_result} cannot be applied directly to the above right side, since \eqref{F_deriv_assumptions_eq} is not satisfied. We shall show that we can approximate the Laplace functional as, 
	\begin{equation}
		\begin{multlined}
		\EX\Bigg[\exp \Big(- \sum_{k = 1}^{\#_{J}} g\Big(\frac{1}{2}((N |\inner{u_{\alpha}, q_i}|^2)_{(k)} - b_{\#_{J}})\Big) \Big)\Bigg] \\ 
		= \EX\Bigg[\exp \Big(- \sum_{k = 1}^{\min(\#_{J}, \floor{\log N })} g\Big(\frac{1}{2}((N |\inner{u_{\alpha}, q_i}|^2)_{(k)} - b_{\#_{J}})\Big) \Big)\Bigg] + O(N^{-c_1}),
		\end{multlined}
		\label{laplace_truncation}
	\end{equation} 
	for some constant $c_1 > 0$. Note that if $\#_{J} \leq \floor{\log N}$, we have obvious equality in \eqref{laplace_truncation}, so assume for the rest of this argument that $\#_{J} > \floor{\log N}$. Let $\widehat{k} := \min(\#_J, \floor{\log N}) + 1$, and $\widehat{C}$ be a constant such that $\supp g \subset (-\widehat{C}, \infty)$. In the event that $\frac{1}{2}((N |\inner{u_{\alpha}, q_i}|^2)_{(\widehat{k})} - b_{\#_{J}}) \leq - \widehat{C}$, we have exact equality in \eqref{laplace_truncation}. Hence, we need to show that there exists a $c_1 > 0$ such that 
	\begin{equation}
		\Prob \Big( (N |\inner{u_{\alpha}, q_i}|^2)_{(\widehat{k})} > - \widehat{C} + b_{\#_J} \Big) \leq N^{-c_1}. 
		\label{sub_goal}
	\end{equation} To accomplish this, apply Theorem \ref{main_result} with $K \equiv 1$, $k_1 = \widehat{k}$, and $F$ being a smooth bump function that satisfies 
	\begin{equation*}
		\begin{cases}
			F(x) = 0, & \text{for } x \leq - 1 - 2 \widehat{C} + b_{\#_J} , \\
			0 \leq F(x) \leq 1, & \text{for } -1 - 2 \widehat{C} + b_{\#_J}  \leq x \leq- 2 \widehat{C} + b_{\#_J} , \\
			F(x) = 1, & \text{for } x \geq - 2 \widehat{C} + b_{\#_J} .
		\end{cases}
	\end{equation*}
	Using this notation and \eqref{main_result_eq} we obtain that, 
	\begin{equation}
		\begin{multlined}
		\Prob \Big( (N |\inner{u_{\alpha}, q_i}|^2)_{(\widehat{k})} > - 2 \widehat{C} + b_{\#_J} \Big) \\ 
		\leq \EX[F((N |\inner{u_{\alpha}, q_i}|^2)_{(\widehat{k})})] \\ 
		\leq \EX[F((N |\inner{u_{\alpha}(W_N), q_i}|^2)_{(\widehat{k})})] + N^{-\epsilon/2}. 
		\end{multlined}
		\label{main_proof_eq4}
	\end{equation} 
	Next we repeat the argument of estimating the $\GOE$ eigenvector entries with i.i.d. Gaussians, \eqref{iid_good_approx}. Let $\mathcal{N}_{J}$ denote the binomial point process,
	\begin{equation*}
		\mathcal{N}_{J} := \sum_{(\alpha, i, i) \in J} \delta_{\frac{1}{2}(y_{i \alpha}^2 - b_{\#_{J}})}.
	\end{equation*}	
	By using \eqref{iid_good_approx} we may continue the computations in \eqref{main_proof_eq4} to obtain, 
	\begin{equation*}
		\begin{multlined}
			\Prob \Big( (N |\inner{u_{\alpha}, q_i}|^2)_{(\widehat{k})} > - 2 \widehat{C} + b_{\#_J} \Big) \\ 
			\leq \EX[F((y_{i \alpha}^2)_{(\widehat{k})})] + O(N^{-\epsilon/2}) \\ 
			\leq \Prob\Big(\mathcal{N}_J(-(1 + \widehat{C}), \infty) \geq \widehat{k}\Big) + O(N^{-\epsilon/2})
		\end{multlined}
	\end{equation*}
	This last term may be bounded using $\widehat{k} \geq \log N$ and Bennett's inequality (see Theorem 2.9 of \cite{concentration_inequalities}). This yields, 
	\begin{equation}
		\Prob\Big(\mathcal{N}_J(-(1 + \widehat{C}), \infty) \geq \widehat{k}\Big) \leq O(N^{-1}).
		\label{bennett_bound}
	\end{equation}
	Hence, we have shown that \eqref{laplace_truncation} is true for $c_1 = \epsilon/3$. 	

	Equipped with \eqref{laplace_truncation} we can use Theorem \ref{main_result} with $K = \min(\#_{J}, \floor{\log N})$,  $k_l = l$ for $1 \leq l \leq K$, and \begin{equation*}
	F(x_1, \ldots, x_K) = \exp\left( - \sum_{l = 1}^K g\left(\frac{1}{2}(x_l - b_{\#_J})\right) \right).	
	\end{equation*} Since $K \leq \log N$ and $F$ is compactly supported, \eqref{F_deriv_assumptions_eq} is satisfied. Therefore, we have (with a potentially smaller $\delta$) that, \begin{equation*}
		\begin{multlined}
		\EX\Bigg[\exp \Big(- \sum_{k = 1}^{\#_{J}} g\Big(\frac{1}{2}((N |\inner{u_{\alpha}, q_i}|^2)_{(k)} - b_{\#_{J}})\Big) \Big)\Bigg] \\ 
		=  \EX\Bigg[\exp \Big(- \sum_{k = 1}^{\min(\#_{J}, \floor{\log N})} g\Big(\frac{1}{2}((N |\inner{u_{\alpha}(W_N), q_i}|^2)_{(k)} - b_{\#_{J}})\Big) \Big)\Bigg] + O(N^{-\epsilon/2}).
		\end{multlined}
	\end{equation*} Again, estimating the eigenvector entries of $W_N$ by i.i.d. Gaussians, \eqref{iid_good_approx}, we obtain,
	\begin{equation*}
		\begin{multlined}
		\EX\Bigg[\exp \Big(- \sum_{k = 1}^{\#_{J}} g\Big(\frac{1}{2}((N |\inner{u_{\alpha}, q_i}|^2)_{(k)} - b_{\#_{J}})\Big) \Big)\Bigg] \\ 
		 =  \EX\Bigg[\exp \Big(- \sum_{k = 1}^{\min(\#_{J}, \floor{\log N})} g\Big(\frac{1}{2}((y_{i \alpha}^2)_{(k)} - b_{\#_{J}})\Big) \Big)\Bigg] + O(N^{-\epsilon/2}). 
		\end{multlined}
	\end{equation*} Finally, we may use \eqref{bennett_bound} to remove the truncation, and conclude the proof by using \eqref{iid_laplace_conv}. 
\end{proof}

Finally, we present the proof of Theorem \ref{no_gap_deloc_prop}.

\begin{proof}[Proof of Theorem \ref{no_gap_deloc_prop}]
	Let $f$ be an even smooth function that is non-decreasing for $x \geq 0$, and satisfies,
	\begin{equation}
		f(x) = \begin{cases}
			0, & \text{for } 0 \leq x \leq 1, \\
			1, & \text{for } 2 \leq x.
		\end{cases}
	\end{equation} Further, recall the constant $\delta >0$ from Theorem \ref{main_result} and assume that $\epsilon < \delta$. Let $$f_{\mathfrak{c}, N}(x) := f( N^{\epsilon/4} (x - u^{\star}_{\mathfrak{c}})). $$
	We shall prove \eqref{order_statistic_concentration} by showing that,
	\begin{equation}
		\Prob\Big(\abs{(N\abs{\inner{u_{\alpha}, q_i}}^2)_{\ceil{\mathfrak{c} N}} - u^{\star}_{\mathfrak{c}}} \geq 2 N^{-\epsilon/4} \Big) = O(N^{-\epsilon/2}).
		\label{goal_eq_in_proof}
	\end{equation} We bound the probability as follows,
	\begin{equation*}
		\begin{split}
			\Prob\Big(\abs{(N\abs{\inner{u_{\alpha}, q_i}}^2)_{\ceil{\mathfrak{c} N}} - u^{\star}_{\mathfrak{c}}} \geq 2 N^{-\epsilon/4} \Big) \leq \EX[f_{\mathfrak{c}, N}((N\abs{\inner{u_{\alpha}, q_i}}^2)_{\ceil{\mathfrak{c} N}})].
		\end{split}
	\end{equation*} On this last expectation we use Theorem \ref{main_result} with $K = 1$, $k_1 = \ceil{\mathfrak{c} N}$ and $F = f_{\mathfrak{c}, N}$. Note that by construction, $f_{\mathfrak{c}, N}$ satisfies the assumptions \eqref{F_deriv_assumptions_eq}. Hence, we obtain, 
	\begin{equation*}
		\Prob\Big(\abs{(N\abs{\inner{u_{\alpha}, q_i}}^2)_{\ceil{\mathfrak{c} N}} - u^{\star}_{\mathfrak{c}}} \geq 2 N^{-\epsilon/4} \Big) \leq \EX^{\GOE}[f_{\mathfrak{c}, N}((N\abs{\inner{u_{\alpha}, q_i}}^2)_{\ceil{\mathfrak{c} N}})] + O(N^{-\epsilon/2}).
	\end{equation*}  Next we use the same approximation of the eigenvector entries by i.i.d. Gaussians $(y_{i \alpha})_{i=1}^N$ as in \eqref{iid_good_approx}. Exactly as in the proof of Theorem \ref{gumbel_convergence_prop} we use the Haar invariance for the eigenvectors of the $\GOE$ to replace $\inner{u_{\alpha}, q_i}$ by $\inner{u_{\alpha}, e_{i}}$, and then use that the derivative of $f$ is bounded by $O(N^{\epsilon/2}) \ll N^{1/8 - 3\epsilon/2}$ to see that 
	\begin{equation*}
		\Prob\Big(\abs{(N\abs{\inner{u_{\alpha}, q_i}}^2)_{\ceil{\mathfrak{c} N}} - u^{\star}_{\mathfrak{c}}} \geq 2 N^{-\epsilon/4} \Big) \leq \EX[f_{\mathfrak{c}, N}((y_{i \alpha}^2)_{\ceil{\mathfrak{c} N}})] + O(N^{-\epsilon/2}).
	\end{equation*} Finally, we see that this last expectation can be bounded as,
	\begin{equation*}
		 \EX[f_{\mathfrak{c}, N}((y_{i \alpha}^2)_{\ceil{\mathfrak{c} N}})]	\leq \Prob(\abs{(y_{i \alpha}^2)_{\ceil{\mathfrak{c} N}} - u^{\star}_{\mathfrak{c}}} \geq N^{-\epsilon/4}).
	\end{equation*} But $(y_{i \alpha}^2)_{\ceil{\mathfrak{c} N}}$ is the sample quantile of i.i.d. $\chi^2(1)$-variables. In particular, the pdf of $\chi^2(1)$ is bounded from below in a compact neighborhood around the quantile $u_{\mathfrak{c}}^{\star}$, which implies that $(y_{i \alpha}^2)_{\ceil{\mathfrak{c} N}}$ concentrates at a rate $N^{-1/2}$ around $u^{\star}_{\mathfrak{c}}$. So we even have that $\Prob(\abs{(y_{i \alpha}^2)_{\ceil{\mathfrak{c} N}} - u^{\star}_{\mathfrak{c}}} \geq N^{-\epsilon/4}) \leq N^{-D}$ for any large $D > 0$. This may be shown, \eg by using Hoeffding's inequality. With this we have shown \eqref{goal_eq_in_proof} and have finished the proof. 

	Finally, \eqref{limit_of_smallest_sub_vector} follows from noting that $$\inf_{\substack{I \subset [[1, N]] \\ \#_{I} = \floor{(1-\mathfrak{c})N}}} \sum_{i \in I} \abs{\inner{u_{\alpha}, q_i}}^2 = \sum_{k = N - \floor{(1 - \mathfrak{c})N} + 1}^N (\abs{\inner{u_{\alpha}, q_i}}^2)_{(k)}.$$
	By using that the summands are non-increasing one may estimate the sum from above and below by picking $N^{\epsilon/4}$ indices in a uniform grid of $[[N - \floor{(1 - \mathfrak{c})N} - 1, N]]$. Then, using \eqref{order_statistic_concentration} both the upper and lower bound yields a Riemann sum for the integral in \eqref{limit_of_smallest_sub_vector}. 
\end{proof}

\section{Green function comparison}

 The Green function of $H$ is defined as $G(z) := (H - z I)^{-1}$. For two vectors $u, v \in \mathbb{R}^N$ we denote $G_{uv} := \inner{u, G v}$. When we write $G_{ua}$, where $a \in [[1, N]]$ is an index we will understand this to mean $G_{ua} = G_{u e_a}$, where $e_a$ is the $a$-th standard basis vector. 

Following \cite{knowles_yin_2013} we can estimate the entries of $Q u_\alpha$ as a functional of the Green function as follows. 

\begin{lemma}[Lemma 3.2 in \cite{knowles_yin_2013}]
	Let $\epsilon > 0$, and let $\alpha$ be an index $< A := \floor{N^{ \epsilon / 10}}$ and $\varrho = \varrho_{\alpha} : \mathbb{R} \to \mathbb{R}^+$ a smooth cutoff function concentrated around $\alpha - 1$, satisfying \begin{equation*}
		\varrho(x) = 1 \quad \mathrm{if} \quad \abs{x - \alpha + 1} \leq 1/3, \qquad \varrho(x) = 0 \quad \mathrm{if} \quad \abs{x - \alpha + 1} > 2/3.
	\end{equation*} Let \begin{equation*}
		E_L := -2 - 2N^{-2/3 + \epsilon}, \quad E^- := E - N^{2 \epsilon} \eta,
	\end{equation*} where \begin{equation}
		\eta := N^{-2/3 - 6 \epsilon}, \qquad \tilde{\eta} := N^{-2/3 - 14 \epsilon} . 
		\label{eta_defs}
	\end{equation} Further, let $I := [-2 - N^{-2/3+ \epsilon}, -2 + N^{-2/3+ \epsilon}]$. 
	Then there exists $\epsilon_0$ such that for $\epsilon \in (0, \epsilon_0)$, it holds for $N \geq N_0(\epsilon)$ with probability $\geq 1 - N^{- \epsilon}$, that \begin{equation}
	\max_{\alpha < A} \max_{i,j} \left| N \inner{u_{\alpha}, q_i} \inner{u_{\alpha}, q_j}  -  \frac{N}{\pi} \int_I 
	\im G_{q_i q_j}(E + \ii \eta) \varrho_{\alpha}[\Tr(\indicator_{[E_L, E^-]} \star \; \theta_{\tilde{\eta}})(H)] \di E \right| 
	\leq C N^{-\epsilon},
	\label{entry_size_functional}
	\end{equation} with the constant $C$ not depending on $\epsilon$, and $\star$ denotes convolution. 
	\label{green_function_estimate_lemma}
\end{lemma}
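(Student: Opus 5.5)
The argument reproduces the scheme of \cite[Lemma 3.2]{knowles_yin_2013}, using rigidity and isotropic delocalization for generalized Wigner matrices (from the isotropic local law of \cite{bloemendal_erdos_knowles_yau_yin_2014}). The plan has three steps, carried out on a high-probability event.

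\textbf{Step 1: the smoothed counting function selects $\lambda_\alpha$.} By rigidity at the lower edge, with overwhelming probability $\abs{\lambda_\beta + 2} \leq N^{-2/3+\epsilon/2}$ for all $\beta \leq A$ (here $A = N^{\epsilon/10}$ and $\gamma_\beta + 2 \asymp (\beta/N)^{2/3}$), and there are no eigenvalues below $E_L$. For $E \in I$ we compare $\Tr(\indicator_{[E_L,E^-]}\star\theta_{\tilde\eta})(H)$ with $\mathcal{N}(E^-) := \#\{\beta : \lambda_\beta \leq E^-\}$. The difference is controlled by eigenvalues within distance $N^{\epsilon'}\tilde\eta$ of $E^-$ or $E_L$, plus tail contributions of the Cauchy kernel. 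The kernel tails give a contribution of order $\tilde\eta \sum_\beta (\lambda_\beta - E^-)^{-2}$. Away from a small-probability event, a dyadic count using rigidity bounds this by $N^{-c\epsilon}$, so $\varrho_\alpha$ of the smoothed trace equals $\indicator(\mathcal{N}(E^-) = \alpha-1)$ except when $E^-$ lies within $N^{\epsilon}\tilde\eta$ of some $\lambda_\beta$.

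\textbf{Step 2: level repulsion makes the integrand effectively $\indicator(E \in [\lambda_\alpha + N^{2\epsilon}\eta, \lambda_{\alpha+1}+N^{2\epsilon}\eta))$.} We need that, with probability $\geq 1 - N^{-\epsilon}$, $\lambda_{\alpha+1} - \lambda_\alpha \geq N^{-2/3-\epsilon/2}$ for all $\alpha < A$. I would import this level repulsion at the edge from edge universality (Tracy--Widom/Airy gap comparison). This is where the non-explicit $\epsilon_0$ enters, consistent with the remark after Theorem \ref{gumbel_convergence_prop}. On this event the integral reduces to
\[
\frac{N}{\pi}\int_{\lambda_\alpha + N^{2\epsilon}\eta}^{\lambda_{\alpha+1}+N^{2\epsilon}\eta} \im G_{q_iq_j}(E+\ii\eta)\,\di E ,
\]
up to errors from windows of width $O(N^{\epsilon}\tilde\eta)$. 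Since $\tilde\eta \ll \eta$, those windows are negligible given $\abs{\im G_{q_iq_j}} \prec N^{2/3+\dots}$ from the isotropic local law together with delocalization at the edge.

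\textbf{Step 3: spectral decomposition.} Write $\im G_{q_iq_j}(E+\ii\eta) = \sum_\beta \frac{\eta\,\inner{q_i,u_\beta}\inner{q_j,u_\beta}}{(\lambda_\beta-E)^2+\eta^2}$. The $\beta=\alpha$ term integrated over the window gives $\pi\inner{q_i,u_\alpha}\inner{q_j,u_\alpha}\,(1 - O(N^{-2\epsilon}) - O(\eta/(\lambda_{\alpha+1}-\lambda_\alpha)))$. Multiplied by $N$ and using $N\abs{\inner{q_i,u_\alpha}}^2 \prec 1$, this gives error $N^{-c\epsilon}$; the parameters $6\epsilon$ and $2\epsilon$ are chosen for exactly this. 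Terms with $\beta \neq \alpha$ contribute at most $N\eta\,\abs{\inner{q_i,u_\beta}\inner{q_j,u_\beta}} \cdot \int (\dots)$. For $\beta$ near the edge we use the repulsion gap, which makes each term $O(N^{-c\epsilon})$, and summing over the $O(N^{\epsilon})$ eigenvalues in the window keeps the total $O(N^{-c\epsilon})$. Far eigenvalues are handled by rigidity and the bound $\sum_\beta N\abs{\inner{q_i,u_\beta}}^2\eta/\mathrm{dist}^2$, again via the local law. Finally, taking the max over $\alpha<A$ and $i,j$ costs only a union bound against overwhelming-probability estimates; the only non-overwhelming event is the repulsion event, which is already uniform in $\alpha$.

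\textbf{Main obstacle.} The hard part is Step 2: obtaining a gap lower bound $N^{-2/3-\epsilon/2}$ uniformly for the first $N^{\epsilon/10}$ gaps, with failure probability at most $N^{-\epsilon}$. I would first try the edge universality comparison with GOE combined with known GOE edge repulsion. Calibrating the exponents so that the failure probability beats $N^{-\epsilon}$ forces $\epsilon$ to be small and gives a constant $C$ independent of $\epsilon$.
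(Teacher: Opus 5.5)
Your plan follows the same scheme as the Knowles--Yin argument that the paper imports by citation; the paper itself gives no proof. Rigidity and the smoothed counting function turn $\varrho_\alpha$ into an indicator, edge level repulsion isolates $\lambda_\alpha$, and a spectral decomposition with isotropic delocalization finishes the argument. The architecture is right, but several steps are wrong as written.

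\emph{Step 2 is off by one and contradicts your Step 1.} The event $\mathcal{N}(E^-)=\alpha-1$ means $E\in[\lambda_{\alpha-1}+N^{2\epsilon}\eta,\lambda_\alpha+N^{2\epsilon}\eta)$. On your window $[\lambda_\alpha+N^{2\epsilon}\eta,\lambda_{\alpha+1}+N^{2\epsilon}\eta)$, the Lorentzian at $\lambda_\alpha$ carries mass only $O(N^{-2\epsilon})$, and the integral reproduces $N\inner{u_{\alpha+1},q_i}\inner{u_{\alpha+1},q_j}$ instead. With the correct window, the left-margin loss in Step 3 is governed by $\lambda_\alpha-\lambda_{\alpha-1}$.

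\emph{The tail estimate in Step 1 is wrong.} The tail of $\indicator_{[E_L,E^-]}\star\theta_{\tilde\eta}$ at distance $d$ from one endpoint is $\asymp\tilde\eta/d$, not $\tilde\eta/d^2$. Summed over the bulk this is of order $N\tilde\eta=N^{1/3-14\epsilon}$. You therefore need the cancellation between the two endpoint tails, which gives a net $\tilde\eta\,(E^--E_L)/d^2$ for eigenvalues far from both endpoints. Eigenvalues within $N^{-2/3+C\epsilon}$ of $E^-$ must be handled separately via $\tilde\eta/d$ and rigidity counting.

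\emph{The bound for the exceptional windows is misstated.} What you need is $N\abs{\im G_{q_iq_j}(E+\ii\eta)}\prec N(N\eta)^{-1}=N^{2/3+6\epsilon}$, not $\abs{\im G_{q_iq_j}}\prec N^{2/3+\cdots}$. This gives $N^{-7\epsilon}$ per window. Only the two windows with $E^-\approx\lambda_{\alpha-1}$ or $E^-\approx\lambda_\alpha$ matter.

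\emph{The substantive gap is the repulsion input.} You ask for $\lambda_{\alpha+1}-\lambda_\alpha\ge N^{-2/3-\epsilon/2}$ for all $\alpha<A$ with probability $\ge 1-N^{-\epsilon}$. This fails already for the GOE. Near the $k$-th edge eigenvalue the typical spacing is $\asymp N^{-2/3}k^{-1/3}$, and GOE gaps satisfy $\Prob(\text{gap}\le s\cdot\text{spacing})\asymp s^2$. Hence $\Prob(\lambda_{k+1}-\lambda_k\le N^{-2/3-\epsilon/2})\asymp N^{-\epsilon}k^{2/3}$, which for the single index $k\approx N^{\epsilon/10}$ is already $\gg N^{-\epsilon}$.

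The fix is to use the room the parameters leave. The gaps only need to beat $N^{2\epsilon}\eta=N^{-2/3-4\epsilon}$ by a power. A threshold such as $N^{-2/3-2\epsilon}$ keeps all Step 3 errors at $N^{-2\epsilon+o(1)}$, while the GOE failure probability drops to $O(N^{-4\epsilon+\epsilon/6})$.

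For $H$ itself you then need this repulsion bound uniformly over the first $N^{\epsilon/10}$ gaps. That requires an edge universality comparison with a polynomial error $N^{-c}$ valid for indices up to $N^{\epsilon/10}$; Knowles--Yin take such a level-repulsion estimate as a hypothesis. Requiring this unknown error to be $\ll N^{-\epsilon}$ is exactly what produces the non-explicit $\epsilon_0$.
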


We introduce the shorthand notation for the quantity in \eqref{entry_size_functional},
\begin{equation}
	\mathcal{X}_{ij}^{(\alpha)} := \frac{N}{\pi} \int_{I} \im G_{q_i q_j}(E + \ii \eta) \varrho_{\alpha}[\Tr (\chi \star \theta_{\tilde{\eta}})(H)] \di E.
	\label{cal_X_def}
\end{equation} In this notation, Lemma \ref{green_function_estimate_lemma} states that
\begin{equation}
    \max_{(\alpha, i, j) \in J} \left| N \inner{u_\alpha, q_i} \inner{u_\alpha, q_j} - \mathcal{X}_{ij}^{(\alpha)} \right| \le C N^{-\epsilon}, 
	\label{green_function_estimate_simpler_notation}
\end{equation} holds with probability $\geq 1 - N^{-\epsilon}$. 
We also introduce, 
\begin{equation*}
	X := (\mathcal{X}_{ij}^{\alpha})_{(\alpha, i, j) \in J},
\end{equation*}
as the vector of length $\#_{J}$ holding all $\mathcal{X}_{ij}^{(\alpha)}$.

Next we wish to interpolate our Wigner matrix with a Gaussian matrix. This is done in the following Theorem, which is our main technical result. Our main result, Theorem \ref{main_result} follows by using Theorem \eqref{main_gfct} together with a suitable chosen smooth estimate of order statistics, see Section \ref{smoothed_order_statistics_section} for details.

\begin{theorem}

	Let $\widetilde{\func}_{\tau} = (\func_{\tau}^1, \dots, \func_{\tau}^K) : \R^{\#_{J}} \to \R^K$ be an $N$-dependent family of smooth functions parametrized by $\tau > 0$, whose components $\func_{\tau}^l, l = 1, \dots, K$ each satisfy the bound, 
	\begin{align}
	\sum_{1 \leq i_1, \cdots, i_k \leq \#_{J}} \abs{\partial_{i_1} \dots \partial_{i_k} \func_{\tau}^l(x)} \leq \frac{C_k}{\tau^{k-1}}, \label{func_assumptions}
	\end{align} 
	for all $k \in \mathbb{N}$, and all $x \in \R^{\#_J}$, where $C_k$ are $k$-dependendent constants. 

	Recall $\epsilon_0 > 0$ from Lemma \ref{green_function_estimate_lemma}. There exists $\epsilon_1$ with $0 < \epsilon_1 \leq \epsilon_0$ such that for $\epsilon \in (0, \epsilon_1)$, the following holds: 
	
	Let $F : \R^K \to \R$ be a smooth function such that for each $k \in \mathbb{N}$, there is an $N_0(k)$ such that for all $N \geq N_0(k)$ and all $x \in \R^K$, 
	\begin{align}
		\label{deriv_F_bound_assump}
	\sum_{1 \leq i_1, \dots, i_k \leq K} \abs{\partial_{i_1} \dots \partial_{i_k} F(x)} \leq N^{\Fasump k \epsilon}. 
	\end{align} Further, choose $\tau := N^{-\epsilon}$. Then for $N \geq N_0(\epsilon, (C_k)_{k=1}^{\infty})$, 
	\begin{equation}
	\abs{(\EX - \EX^{\GOE})[(F \circ \widetilde{\func}_{\tau}) (X)]}	\leq N^{-1/9}.
	\label{main_gfc_theorem_eq}
	\end{equation}
	\label{main_gfct}
\end{theorem}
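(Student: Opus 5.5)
We prove Theorem \ref{main_gfct} by a continuous Green function comparison along an Ornstein--Uhlenbeck type interpolation. Let $W$ be a GOE-type Gaussian matrix whose variance profile matches $V$. Set $H_t := \e^{-t/2} H + \sqrt{1-\e^{-t}}\, W$, and write $X(t)$ for the vector $X$ evaluated at $H_t$. The comparison $\EX$ versus $\EX^{\GOE}$ splits into two steps. First we compare $H$ with the Gaussian matrix having profile $V$. Then we compare that matrix with $\GOE_N$ itself. For the second step we follow the strategy of \cite{schnelli_xu_generalized_22}: interpolate the variance profile linearly, $V(s) = (1-s)V + s/N$. Since the profile stays in the generalized Wigner class, the same derivative bounds apply.

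For the first step we differentiate $\EX[(F\circ\widetilde\func_\tau)(X(t))]$ in $t$ and apply cumulant expansion in the entries $h_{ab}$. The first and second order terms cancel by construction. The derivative then reduces to $N^{-3/2}$ times third cumulants, plus $N^{-2}$ times fourth cumulants, plus truncation errors, all summed over $a,b$ against $\partial_{ab}^{k}$ of the functional, for $k=3,4,\dots$. Each $\partial_{ab}$ acting on $\mathcal X_{ij}^{(\alpha)}$ produces terms of two kinds. The first kind hits $G_{q_iq_j}$, giving products like $G_{q_i a}G_{bq_j}$. The second kind hits the cutoff $\varrho_\alpha[\Tr(\chi\star\theta_{\tilde\eta})(H)]$, which by Helffer--Sjöstrand is an integral of $\Tr G$ at scale $\tilde\eta$, giving $\sum_x G_{xa}G_{bx}$ at $\tilde\eta$. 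By the chain rule, derivatives of $F\circ\widetilde\func_\tau$ produce sums over $l$ and over indices in $J$ of $\partial F\cdot\partial\func^l\cdots$ multiplied by products of $\partial_{ab}^{m}\mathcal X$. Here \eqref{func_assumptions} is used in its $\ell^1$ form: a $k$-th derivative of $\func^l$ contracted against vectors bounded in sup-norm costs $C_k\tau^{1-k}$. Likewise \eqref{deriv_F_bound_assump} costs $N^{100k\epsilon}$. Consequently every derivative of the composite is bounded by $N^{C\epsilon}$ times the maximum over $(\alpha,i,j)\in J$ of $|\partial^m_{ab}\mathcal X_{ij}^{(\alpha)}|$, with no loss from the possibly huge cardinality $\#_J$.

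The key input is the edge local law (isotropic version) of \cite{bloemendal_erdos_knowles_yau_yin_2014}. At the edge it gives $|G_{q_i a}|,|G_{bq_j}| \prec N^{-1/3+C\epsilon}$ for $z=E+\ii\eta$, and $\sum_x|G_{xa}|^2 = \im G_{aa}/\tilde\eta \prec N^{C\epsilon}N^{-1/3}\tilde\eta^{-1}$. Integrating over $|I|\sim N^{-2/3+\epsilon}$ with the prefactor $N$ then yields $|\partial_{ab}^m\mathcal X|\prec N^{-1/3+C_m\epsilon}$ for $a\ne b$. Diagonal terms $a=b$ are only $N$ in number and are harmless.

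The main obstacle is the third-order term. Naive power counting gives $N^2\cdot N^{-3/2}\cdot N^{-1/3+C\epsilon}$, which is too large, so the fourth- and higher-order terms work by power counting but the third-order one does not. To handle it, we would expand $\partial_{ab}^3$ into monomials with an odd number of off-diagonal factors such as $G_{ab}$ or $G_{q_ia}G_{bq_j}G_{ab}$. Each such off-diagonal factor gains an extra $N^{-1/3}$, and the sum over $a,b$ of $G_{q_ia}G_{bq_j}$ is controlled by the Ward identity. If that is not enough, we would perform one further cumulant expansion on the off-diagonal $G_{ab}$ (the standard ``unmatched index'' argument) to gain an extra factor $N^{-1/3}$. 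The result is a bound of order $N^{-1/3+C\epsilon}$ for the derivative, uniformly in $t$.

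Integrating the derivative over $t\le C\log N$ and using that $H_t$ is Gaussian up to $\e^{-t/2}$ error for large $t$ gives the bound $N^{-1/3+C\epsilon}\log N$. The variance-profile interpolation in the second step is estimated with the same derivative bounds. Choosing $\epsilon_1$ small so that $C\epsilon<1/3-1/9$ yields \eqref{main_gfc_theorem_eq}.
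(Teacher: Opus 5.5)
Your Step 1 (OU flow between $H$ and the Gaussian matrix with the same profile, cancellation of the variance terms, and an unmatched-index expansion for the $\kappa_3$ terms) matches the paper's Lemma~\ref{first_comprison_step_lemma}. Your Step 2, however, has a genuine gap.

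Once the two Gaussian ensembles have different variance profiles, the second-order terms no longer cancel. Along the paper's flow \eqref{gaussian_to_gaussian_interpolating_flow}, and likewise along your linear interpolation $V(s)$, the derivative is, up to constants,
\[
\e^{-t}\sum_{a,b}\Big(V_{ab}-\frac1N\Big)\,\EX\Big[\frac{\partial^2\widetilde F(X(t))}{\partial h_{ab}(t)^2}\Big].
\]
The weights are only $O(1/N)$ and there are $N^2$ pairs, so your bound $|\partial_{ab}^2\mathcal X|\prec N^{-1/3+C\epsilon}$ gives $N^{2/3+C\epsilon}$. You can use $\sum_b (V_{ab}-1/N)=0$ to replace $G_{aa},G_{bb}$ by $G_{aa}-\msc$, $G_{bb}-\msc$ in a term like $\im G_{q_iq_j}\,\varrho_\alpha'\,\Dim(G_{aa}G_{bb})$. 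Even then you are still at $N^{1/3+C\epsilon}$.

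So the claim that ``the same derivative bounds apply'' is exactly what fails here. Staying in the generalized Wigner class only gives you the local law, not smallness of this term.

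The missing idea, which is the actual content of the Schnelli--Xu strategy you cite, is a recursive expansion in the indices carrying the inhomogeneous weight $\mathring V=V-\Pi$:
\begin{enumerate}
\item Write $G_{aa}=-(z\msc+\msc^2)G_{aa}$, use $zG_{aa}=(HG)_{aa}-1$, and integrate by parts in $h_{ac}$.
\item Every resulting term gains a factor $(N\tilde\eta)^{-1}$, except one. That exceptional term is the original one with $\mathring V$ replaced by $\mathring V V(t)=(1-\e^{-t})\mathring V^2$.
\item Iterate $\widetilde k\sim\log N$ times and use $\|\mathring V^k\|\le(1-c_0)^k$; this is where $\Cinf>0$ enters. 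This removes the leading term.
\end{enumerate}
The price is that you must control the roughly $(\widetilde k d)^d$ generated terms. In the isotropic setting you must also handle the deterministic remainders $\langle q_i,e_{i_1}\rangle\msc$ that appear when the weighted index sits in $G_{i_1q_i}$. These cannot be expanded further; they are handled by stopping once four such factors accumulate, which gives $N^{-2}$ by Cauchy--Schwarz.

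There are also three smaller points.
\begin{itemize}
\item The endpoint $V(1)=\Pi$ of your interpolation has diagonal variance $1/N$ rather than the $\GOE$ value $2/N$. A further, easy comparison is needed; the paper does this via $\widetilde V=((1+\delta_{ij})V_{ij})$.
\item In Step 1, the dangerous $\kappa_3$ term is $\im(G_{q_ia}G_{bq_j}G_{aa}G_{bb})$, which contains no $G_{ab}$. The Ward identity gives nothing beyond the $|\langle q_i,e_a\rangle|$ gain. The unmatched expansion must therefore be done on $G_{q_ia}$, after which the two leading terms cancel only up to $O(1/N)$ via $\sum_d V_{cd}=1$.
\item After this expansion the $\kappa_3$ terms are $O(N^{-1/6+C\epsilon})$, not $O(N^{-1/3+C\epsilon})$: the prefactor $N^{1/2}$ multiplies two small factors. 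This still suffices for $N^{-1/9}$, provided $C\epsilon<1/6-1/9$.
\end{itemize}
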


\begin{remark}
	The bound $N^{-1/9}$ in \eqref{main_gfc_theorem_eq} is not optimal. Our proof implies that the inequality holds for $N^{-1/9}$ replaced by $N^{-1/6 + c}$, for any positive constant $c > 0$. However, the bottleneck error in Theorem \ref{main_result} is coming from the approximation error of the eigenvector entries in Lemma \ref{green_function_estimate_lemma}. 
\end{remark}

The proof outline is similar to the Green function comparison in \cite{schnelli_xu_generalized_22}. We shall first compare our generalized Wigner matrix $H$ to a Gaussian matrix with the same variance profile as $H$, and then compare the Gaussian matrix with a custom variance profile to the $\GOE$ to obtain \eqref{main_gfc_theorem_eq}. A more detailed overview is presented below in Section \ref{comparison_outline_section}. Compared to the Green function comparison in \cite{schnelli_xu_generalized_22}, Theorem \ref{main_gfct} introduces substantial technical difficulties. Since $K$ and $\#_J$ can be large (of order $N^2$) the number of terms explodes if they are not tracked effectively. The general orthogonal basis $(q_i)_{i=1}^N$ also complicates the proof. In particular, the local law bounds provided below in Lemmas \ref{local_law_bound_lemma} and \ref{step2_local_law_bound_lemma} require considerable work, partially since the functional $\widetilde{\func}_\tau$ prohibits us from using the standard Ward identity-bound.    

\subsection{Preliminaries and notation}

First we gather some information regarding, $\msc$, the Stieltjes transform of the semicircle law, which can be defined as the unique analytic solution $\C^+ \to \C^+$ to the equation, 
\begin{equation*}
	1 + z \msc(z) + (\msc(z))^2 = 0
\end{equation*}

\begin{lemma}
	The imaginary part of the Stieltjes transform of the semicircular law satisfies
	\begin{equation}
		\abs{\im \msc(z)} \sim \begin{cases}
			\sqrt{\kappa + b}, & \mathrm{if } a \in [-2, 2], \\
			\frac{b}{\sqrt{\kappa + b}}, & \mathrm{otherwise},
		\end{cases}
		\label{msc_imaginary_bounds}
	\end{equation}
	uniformly in $z \in \{a + \ii b : \abs{a} \le 5, 0 < b \le 10\}$, with $\kappa := \min\{\abs{a - 2}, \abs{a + 2}\}$. Moreover, $\abs{\msc(z)} \le 1$ holds on the same spectral domain.
\end{lemma}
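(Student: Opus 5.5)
The statement just above is the classical lemma on the imaginary part of $\msc$, so I will argue directly from the explicit formula. Solving $1 + z\msc + \msc^2 = 0$ gives
\[
\msc(z) = \frac{-z + \sqrt{z^2-4}}{2},
\]
with the branch of the square root chosen so that $\msc$ maps $\C^+$ to $\C^+$. Equivalently, $\sqrt{z^2-4} = \sqrt{z-2}\,\sqrt{z+2}$ with principal branches.

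The bound $\abs{\msc}\le 1$ comes first. The two roots $m_\pm$ of the quadratic satisfy $m_+ m_- = 1$. Since $\msc(z) = \int \frac{\rho_{sc}(x)\di x}{x-z}$, we have $\im \msc > 0$ on $\C^+$, and the other root is $1/\msc$, which has negative imaginary part. I would show $\abs{\msc} < 1$ by observing that $\abs{\msc(z)}\to 0$ as $z \to \infty$, while $\abs{\msc} = 1$ would force $\msc = \bar{\msc}^{-1}$, i.e.\ the two roots coincide up to conjugation. From $\msc + 1/\msc = -z$ one gets $\im(\msc - 1/\msc)$ consistent only with $\im z = 0$. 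Continuity on the connected set $\C^+$ then gives $\abs{\msc}<1$ throughout.

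For the imaginary part, the key factorization is that $\abs{\sqrt{z^2-4}} = \abs{z-2}^{1/2}\abs{z+2}^{1/2} \sim \sqrt{\kappa + b}$ on the bounded domain. One factor is comparable to $\abs{a \mp 2} + b \sim \kappa + b$, and the other is bounded above and below away from the relevant edge. Where both edges are far away, both factors are comparable to constants, and so is $\kappa + b$, since $\kappa+b \ge c$ there. I would then use the identity
\[
\im \msc = \frac{-b + \im\sqrt{z^2-4}}{2}
\]
together with the self-consistent relation $\im \msc = \frac{b + \im \msc}{\abs{\msc + z}^2}\,\abs{\msc}^2$, which follows from $\msc = -1/(z + \msc)$. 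The latter gives
\[
\im \msc\,\bigl(1 - \abs{\msc}^2\bigr) = b\,\abs{\msc}^2.
\]

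Bulk case $a\in[-2,2]$: the argument of $z^2-4$ lies near the negative real axis, so $\sqrt{z^2-4}$ is nearly purely imaginary and $\im\sqrt{z^2-4} \sim \abs{\sqrt{z^2-4}} \sim \sqrt{\kappa+b}$. Subtracting $b \lesssim \sqrt{\kappa+b}$ only matters when $b$ is large. In that regime I instead use $\im \msc \sim b/\abs{z}^2 \sim 1$ together with $\kappa+b\sim 1$ on the bounded domain.

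Outside case $\abs{a}>2$: I use $1-\abs{\msc}^2 \sim \abs{\msc^2 - 1}$, which holds up to constants on the bounded domain. Since $\msc - 1/\msc = \sqrt{z^2-4}$ up to sign, this gives $1 - \abs{\msc}^2 \sim \sqrt{\kappa+b}$, while $\abs{\msc}\sim 1$ holds on the bounded domain. Plugging into the relation above yields $\im \msc \sim b/\sqrt{\kappa+b}$.

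The main obstacle is the outside regime, specifically the lower bound $1-\abs{\msc}^2 \gtrsim \sqrt{\kappa+b}$. Here I would write $\msc = \abs{\msc}e^{\ii\phi}$ and expand $\msc^2 - 1$ near $\pm 1$ to show the deviation is not purely rotational. Since $\phi \sim \im\msc$ is small, the rotational component is of lower order than the radial one in this regime.
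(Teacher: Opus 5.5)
The paper gives no proof of this lemma; it quotes it as a standard fact about the semicircle law, so your argument has to stand on its own. Most of it does. The explicit formula is fine, and so is $\abs{\msc}<1$, which follows at once from your identity $\im\msc\,(1-\abs{\msc}^2)=b\abs{\msc}^2$. Two slips there: the displayed intermediate relation carries a spurious extra factor $\abs{\msc}^2$, and the contradiction argument should use $\im(\msc+1/\msc)$. The bound $\abs{\sqrt{z^2-4}}\sim\sqrt{\kappa+b}$ and the bulk case also work. In the bulk, however, $z^2-4$ is not near the negative real axis close to the edges; at $a=2$ it is $\approx 4\ii b$. The correct input is that its real part $a^2-4-b^2$ is nonpositive. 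This puts $\arg\sqrt{z^2-4}\in[\pi/4,3\pi/4]$ and gives $\im\sqrt{z^2-4}\ge\abs{\sqrt{z^2-4}}/\sqrt2$.

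The genuine gap is the step you flag as the main obstacle. Put $X:=1-\abs{\msc}^2$ and $Y:=\im\msc$. Then exactly $XY=b\abs{\msc}^2$ and $X^2+4Y^2=\abs{1-\msc^2}^2=\abs{\msc}^2\abs{z^2-4}$. These two relations admit a branch with $X\gtrsim\abs{1-\msc^2}$ and a branch with $X\ll\abs{1-\msc^2}$ (the bulk). So the entire content of the outside case is an argument that selects the first branch.

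Your heuristic that the rotational part is of lower order than the radial part does not supply this. For $\abs{a}>2$ with $\kappa\lesssim b$ it is false: as $a\downarrow 2$ with $b$ small, $\msc\approx-1+\sqrt{z-2}$ gives $Y\approx\sqrt{b/2}$ and $X\approx\sqrt{2b}$, so $X\approx 2Y$. Where it is true ($\kappa\gg b$), the claim that $\phi\sim\im\msc$ is small presupposes the upper bound on $\im\msc$ that you are trying to prove.

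One clean repair uses $(X-2Y)^2=\abs{\msc}^2(\abs{z^2-4}-4b)$ together with $\abs{z^2-4}^2-16b^2=(\abs{z}^2-4)^2$. Hence $X-2Y$ vanishes in $\C^+$ only on $\abs{z}=2$, and it is positive for $\abs{z}>2$ (check $z\to\infty$). This gives $X\ge\abs{1-\msc^2}/\sqrt2$ whenever $\abs{a}>2$.

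Alternatively, avoid $X$ for the upper bound. For $a>2$ one has $\arg(z+2)<\arg(z-2)=:\theta<\pi/2$, so $\im\msc\le\frac12\im\sqrt{z^2-4}\le\frac12\abs{z^2-4}^{1/2}\sin\theta=\frac b2\abs{z+2}^{1/2}\abs{z-2}^{-1/2}\lesssim b/\sqrt{\kappa+b}$. The matching lower bound follows from your identity together with $X\le\abs{1-\msc^2}\lesssim\sqrt{\kappa+b}$.
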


We will use the following isotropic local law from \cite{bloemendal_erdos_knowles_yau_yin_2014}.

\begin{lemma}[Theorem 2.12 in \cite{bloemendal_erdos_knowles_yau_yin_2014}]
Fix a small $\omega > 0$, and define the domain,
\begin{equation*}
	\mathbb{S}  = \mathbb{S}(\omega, N) := \{z = a + \ii b \in \mathbb{C}: \abs{a} \leq \omega^{-1}, N^{-1 + \omega} \leq b \leq \omega^{-1}  \}.
\end{equation*} Then for any deterministic unit vectors $u, v \in \mathbb{R}^{N}$, 
\begin{equation}
	\abs{\inner{u, G(v)} - \inner{u, v} \msc(z)} \prec \sqrt{\frac{\im \msc(z)}{N b}} + \frac{1}{N b},
	\label{local_law}
\end{equation} uniformly in $z \in \mathbb{S}$. 
\end{lemma}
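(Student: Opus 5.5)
Since this lemma is quoted from \cite{bloemendal_erdos_knowles_yau_yin_2014}, we could simply invoke it. If we were to prove it, we would do so in two stages: first the entrywise local law, then its upgrade to arbitrary deterministic directions. Write $\Psi(z) := \sqrt{\im \msc(z)/(Nb)} + 1/(Nb)$.

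\textbf{Stage 1: entrywise local law.} The first step is to establish, uniformly in $z \in \mathbb{S}$,
\begin{equation*}
\max_{i,j} \abs{G_{ij}(z) - \delta_{ij}\msc(z)} \prec \Psi(z).
\end{equation*}
\begin{itemize}
\item Schur's complement formula gives
\begin{equation*}
1/G_{ii} = h_{ii} - z - \sum_{k,l \neq i} h_{ik} G^{(i)}_{kl} h_{li}.
\end{equation*}
Large deviation bounds for quadratic forms in the independent entries $(h_{ik})_k$ replace the quadratic form by $\sum_k V_{ik} G^{(i)}_{kk}$, with an error $\prec \Psi$. Here the minor $G^{(i)}$ is compared with $G$ through the usual resolvent identities.
\item This yields a perturbed vector self-consistent equation for $(G_{ii})_i$. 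The normalization \eqref{sum_condition} makes $\msc$ its constant solution. The mean-field bounds \eqref{mean_field_condition} give a spectral gap for $V$ on the orthogonal complement of the constant vector, which provides stability of the equation.
\item Off-diagonal entries are handled by $G_{ij} = -G_{ii}\sum_k h_{ik} G^{(i)}_{kj}$ together with the same large deviation bounds.
\item A continuity (bootstrapping) argument in $b$ closes the argument: start at $b \sim 1$, where the bounds are trivial, and decrease $b$ in steps of size $N^{-3}$. Lipschitz continuity of $G$ in $z$ and a union bound transfer the estimate from one step to the next. The standard fluctuation averaging bound for $\frac1N\sum_i Q_i(1/G_{ii})$ upgrades the averaged error to $\Psi^2$, which sharpens the stability analysis near the edge where $\im \msc$ is small.
\end{itemize}

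\textbf{Stage 2: isotropic extension.} Next we pass to deterministic unit vectors $u,v$. By polarization it suffices to treat $u = v$. We split
\begin{equation*}
\inner{u,Gu} - \msc = \sum_i u_i^2 (G_{ii} - \msc) + \sum_{i \neq j} u_i G_{ij} u_j .
\end{equation*}
The diagonal part is $\prec \Psi$ by Stage 1 and $\norm{u}_2 = 1$. For the off-diagonal part $Z := \sum_{i\neq j} u_i G_{ij} u_j$ we aim at a high-moment bound
\begin{equation*}
\EX\abs{Z}^{2p} \leq N^{\epsilon}\Psi^{2p}\quad \text{for every fixed } p,
\end{equation*}
and then conclude by Markov's inequality. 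To prove it, expand $\abs{Z}^{2p}$ as a sum over $2p$ index pairs $(i_k,j_k)$. Each $G_{i_kj_k}$ is repeatedly expanded through
\begin{equation*}
G_{ij} = G^{(k)}_{ij} + G_{ik}G_{kj}/G_{kk}
\end{equation*}
and the Schur formula, until every factor is independent of the rows carrying a lone index. Any term in which some index appears only once has vanishing expectation, since $\EX h_{ij}=0$ and the $h$'s are independent. The surviving terms each carry enough off-diagonal factors, each of size $\prec\Psi$ by Stage 1, to compensate for the combinatorial sum over coinciding indices. The weights $u_i$ are summed using only $\sum_i u_i^2 = 1$ and the Cauchy--Schwarz inequality.

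\textbf{Main obstacle.} We expect the hardest step to be the bookkeeping in this isotropic moment expansion. Because $u$ may be delocalized, $\sum_i\abs{u_i}$ can be as large as $\sqrt N$, so naive bounds lose powers of $N$. The expansion must therefore be organized as a graphical or partition-based scheme in which every summed index is paired with an $\ell^2$ weight or a Ward-type factor $\sum_j \abs{G_{ij}}^2 = \im G_{ii}/b$. This is where the factor $\im\msc/(Nb)$ in $\Psi$ arises. Finally, uniformity in $z\in\mathbb{S}$ follows from a net argument combined with the Lipschitz continuity of $G$ in $z$.
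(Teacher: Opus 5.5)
The paper gives no proof of this lemma. It is quoted from Theorem~2.12 of \cite{bloemendal_erdos_knowles_yau_yin_2014}, so your opening remark that it can simply be invoked is exactly the paper's route, and it is all the paper needs. Your Stage~1 is a reasonable outline of the entrywise law for generalized Wigner matrices.

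If the Stage~2 sketch is meant as a proof, it has a genuine gap at the lone-index step.

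\textbf{Why lone indices need an $N^{-1/2}$ gain.} In the expansion of $\EX\abs{Z}^{2p}$, an index $i$ that occurs only once carries the weight $\sum_i\abs{u_i}\le\sqrt N$ rather than $\sum_i\abs{u_i}^r\le 1$ ($r\ge 2$). So every lone index must be paid for by a gain of order $N^{-1/2}$.

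\textbf{Your proposed tools do not supply it.} A single extra off-diagonal factor does not, because $\sqrt N\,\Psi\ge c$ on $\mathbb{S}$. The Ward identity does not either: it gives $\sum_i\abs{u_i}\abs{G_{ij}}\le(\im G_{jj}/b)^{1/2}$. This is of order $\sqrt N\cdot\sqrt{\im\msc/(Nb)}$, still a factor $\sqrt N$ above the target $\Psi$.

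\textbf{The vanishing claim is false.} Everything therefore rests on your claim that a term with a lone index has vanishing expectation, and this fails for general entries. By Schur's formula $G_{ii}$ is a rational function of row $i$, so your expansion never makes all factors independent of that row. Write $G_{ij}=-G_{ii}\sum_k h_{ik}G^{(i)}_{kj}$ and expand $G_{ii}$ around its row-$i$-independent part. Already the first-order correction has partial expectation over row $i$ equal, up to a prefactor independent of row $i$, to $\sum_k\EX[h_{ik}^3]\,G^{(i)}_{kk}G^{(i)}_{kj}$. This is nonzero whenever the third cumulants are nonzero, which Definition~\ref{general_wigner_def} allows. It is only small, of order $N^{-3/2}\sum_k\abs{G^{(i)}_{kj}}\prec N^{-1/2}\Psi$.

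\textbf{What is missing.} A moment-method proof of the isotropic law must show such an $N^{-1/2}$ gain per lone index for every term of the expansion, including the higher-order Taylor terms of $G_{ii}$. That idea is absent from your sketch.
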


We shall frequently use cumulant expansions for computing and estimating expectations. 

\begin{lemma}[Lemma 3.1 in \cite{cumulant_expansion_ref}]
	Let $h$ be a random variable with finite moments. The $k$-th cumulant of $h$, denoted by $c^{(k)}$, is defined as \begin{equation*}
		c^{(k)} := (-\ii)^k \left. \left(\frac{\di^k}{\di t^k} \log \EX[\e^{\ii t h}]\right) \right|_{t=0}.
	\end{equation*}
	Let $f : \R \to \C$ be a smooth function. Then for any fixed $l \in \mathbb{N}$ we have \begin{equation}
		\EX [h f(h)] = \sum_{k=1}^l \frac{1}{k!} c^{(k+1)} \EX [f^{(k)}(h)] + R_{l+1},
		\label{cumulant_expansion_formula}
	\end{equation} provided that all the expectations in \eqref{cumulant_expansion_formula} exist. The error term satisfies \begin{equation}
		\abs{R_{l+1}} \leq C_l \EX [\abs{h}^{l+1}] \sup_{\abs{x} \leq M} \abs{f^{(l)}(x)} + C_l \EX \left[ \abs{h}^{l+1} \indicator_{\{\abs{h} > M\}} \right] \norm{f^{(l)}}_{\infty},
		\label{cumulant_expansion_error_bound}
	\end{equation} where $M > 0$ is an arbitrary fixed cutoff and $C_l$ is a constant depending only on $l$.
	\label{cumulant_expansion_lemma}
\end{lemma}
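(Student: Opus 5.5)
The plan is the classical one: show that \eqref{cumulant_expansion_formula} holds \emph{exactly}, with no remainder, for polynomials of low degree, and then apply it to the Taylor polynomial of $f$. What is left over is controlled by the Taylor remainder, and \eqref{cumulant_expansion_error_bound} comes from estimating that remainder, splitting according to whether $|h| \le M$.

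\textbf{Step 1 (exact identity for polynomials).} Let $\varphi(t) = \EX[\e^{\ii t h}]$. Since $h$ has finite moments, $\log \varphi$ is smooth near $t = 0$, and
\begin{equation*}
\EX[h \e^{\ii t h}] = -\ii \varphi'(t) = -\ii \varphi(t) (\log \varphi)'(t) = \varphi(t) \sum_{k \ge 0} \frac{c^{(k+1)}}{k!} (\ii t)^k ,
\end{equation*}
understood as an identity of Taylor expansions at $t = 0$ to any finite order. We also have $\EX[\partial_x^k \e^{\ii t x}|_{x=h}] = (\ii t)^k \varphi(t)$. Comparing the coefficients of $t^d$ for $d \le l-1$ gives the moment--cumulant recursion
\begin{equation*}
\EX[h^{d+1}] = \sum_{k=0}^{d} \binom{d}{k} c^{(k+1)} \EX[h^{d-k}].
\end{equation*}
This is equivalent to $\EX[h\, p(h)] = \sum_{k=1}^{l} \frac{1}{k!} c^{(k+1)} \EX[p^{(k)}(h)]$ for every polynomial $p$ of degree at most $l-1$. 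The $k=0$ term is absent because $c^{(1)} = \EX h$, which is harmless: the formula as stated starts at $k = 1$ and is applied to centred $h$, and for non-centred $h$ one keeps the $k=0$ term $c^{(1)} \EX f(h)$. Terms with $k > \deg p$ vanish, so the truncation at $l$ is exact.

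\textbf{Step 2 (reduction to the Taylor remainder).} Write $f = P + r$, where $P$ is the Taylor polynomial of $f$ at $0$ of degree $l-1$. By Step 1 and linearity,
\begin{equation*}
R_{l+1} = \EX[h\, r(h)] - \sum_{k=1}^{l} \frac{1}{k!} c^{(k+1)} \EX[r^{(k)}(h)].
\end{equation*}
Taylor's theorem gives $|r^{(k)}(x)| \le C |x|^{l-k} \sup_{|y| \le |x|} |f^{(l)}(y)|$ for $0 \le k \le l$. Note that $r^{(l)} = f^{(l)} - f^{(l)}(0)$, which is bounded by $2 \sup |f^{(l)}|$ over the relevant range.

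\textbf{Step 3 (bounding the remainder).} Split each expectation on the events $\{|h| \le M\}$ and $\{|h| > M\}$.
\begin{itemize}
\item On $\{|h| \le M\}$, the supremum is at most $\sup_{|x|\le M}|f^{(l)}(x)|$. The cumulants obey $|c^{(k+1)}| \le C_k \EX|h|^{k+1}$, and Hölder gives $\EX|h|^{k+1}\, \EX|h|^{l-k} \le \EX|h|^{l+1}$. Together these produce the first term of \eqref{cumulant_expansion_error_bound}.
\item On $\{|h| > M\}$, we bound by $\norm{f^{(l)}}_\infty$ and must control $\EX[|h|^{l+1}\indicator_{|h|>M}]$ for the $k=0$ term, and the mixed quantities $\EX|h|^{k+1}\, \EX[|h|^{l-k}\indicator_{|h|>M}]$ for $k \ge 1$.
\end{itemize}

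\textbf{Main obstacle.} The delicate point is the tail part of the mixed terms with $k \ge 1$, since the product of a full moment and a truncated moment is not obviously bounded by $\EX[|h|^{l+1}\indicator_{|h|>M}]$. I would first try the elementary inequality
\begin{equation*}
\EX[|h|^{l-k}\indicator_{|h|>M}] \le M^{-(k+1)} \EX[|h|^{l+1}\indicator_{|h|>M}].
\end{equation*}
This reduces the problem to the ratio $\EX|h|^{k+1}/M^{k+1}$. When $M$ is of order at least the typical size of $h$ (the regime in which the lemma is used), this ratio is bounded. Otherwise one can enlarge the first term, which already dominates since $\EX|h|^{k+1}\EX|h|^{l-k} \le \EX|h|^{l+1}$. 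Failing a clean version of this, one can alternatively Taylor expand $f^{(k)}$ directly around $0$ term by term to order $l-k$, which gives the same structure. The constant $C_l$ collects the combinatorial factors.
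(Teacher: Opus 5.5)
\paragraph{Overall.} The paper does not prove this lemma; it imports it from \cite{cumulant_expansion_ref}. Your route is the classical one: exactness on polynomials via the moment--cumulant recursion, then a Taylor remainder split at $\abs{h}=M$. Steps 1 and 2 are fine. One small slip: with $P$ of degree $l-1$ you get $r^{(l)}=f^{(l)}$, not $f^{(l)}-f^{(l)}(0)$, but this is harmless. Your remark about centring is correct: for $l\ge 1$ and $f\equiv 1$ the stated bound forces $R_{l+1}=0$, hence $\EX h=0$, so the lemma as written tacitly needs centred $h$.

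\paragraph{The gap.} It sits exactly at the step you flag. Your inequality $\EX[\abs{h}^{l-k}\indicator_{\{\abs{h}>M\}}]\le M^{-(k+1)}\EX[\abs{h}^{l+1}\indicator_{\{\abs{h}>M\}}]$ leaves a factor $\EX\abs{h}^{k+1}/M^{k+1}$. That factor is not bounded by a constant depending only on $l$ once $M$ is small, yet the lemma is claimed for every $M>0$.

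The fallback does not rescue this. The first term of the error bound carries $\sup_{\abs{x}\le M}\abs{f^{(l)}(x)}$, not $\norm{f^{(l)}}_\infty$. So a contribution of size $\EX\abs{h}^{l+1}\norm{f^{(l)}}_\infty$ cannot be absorbed into it: if $f^{(l)}$ vanishes on $[-M,M]$, that first term is zero. Re-expanding $f^{(k)}$ does not help either, since the product structure $c^{(k+1)}\times\EX[\,\cdots]$ is unchanged.

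\paragraph{The fix.} The missing idea is to split the full moment at level $M$ as well:
\[
\EX\abs{h}^{k+1}\le M^{k+1}+\EX\big[\abs{h}^{k+1}\indicator_{\{\abs{h}>M\}}\big].
\]
The first piece is harmless. Since $M^{k+1}\le\abs{h}^{k+1}$ on $\{\abs{h}>M\}$,
\[
M^{k+1}\EX[\abs{h}^{l-k}\indicator_{\{\abs{h}>M\}}]\le\EX[\abs{h}^{l+1}\indicator_{\{\abs{h}>M\}}].
\]
For the second piece, apply H\"older on the event $\{\abs{h}>M\}$ with exponents $\frac{l+1}{k+1}$ and $\frac{l+1}{l-k}$:
\[
\EX\big[\abs{h}^{k+1}\indicator_{\{\abs{h}>M\}}\big]\,\EX\big[\abs{h}^{l-k}\indicator_{\{\abs{h}>M\}}\big]\le \Prob(\abs{h}>M)\,\EX\big[\abs{h}^{l+1}\indicator_{\{\abs{h}>M\}}\big].
\]
For $k=l$ this is trivial, because $\EX[\abs{h}^{0}\indicator_{\{\abs{h}>M\}}]=\Prob(\abs{h}>M)$.

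Hence every mixed tail term is at most $2C_k\,\EX[\abs{h}^{l+1}\indicator_{\{\abs{h}>M\}}]\,\norm{f^{(l)}}_\infty$, uniformly in $M>0$. With this inserted, your argument proves the lemma as stated, for centred $h$.
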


\subsection{Comparison outline and proof of Theorem \ref{main_gfct}}
\label{comparison_outline_section}
The proof of Theorem \ref{main_gfct} is based on interpolating between three different ensembles, in the end connecting a generalized Wigner matrix and the $\GOE$. 

Recall the variance profile $V$ of $H$ from Definition \ref{general_wigner_def}. Let $W^V = (w^{(v)}_{ij})_{1 \leq i, j \leq N}$ be a symmetric random matrix with independent Gaussian entries (up to symmetry) that has $V$ as variance profile matrix. That is, $\EX[((W^V)_{ij})^2] = V_{ij}$. For ease of notation we introduce, 
\begin{equation}
	\widetilde{F} := F \circ \widetilde{\func}_{\tau},
	\label{F_tilde_def}
\end{equation} to denote the composition inside the expectation in \eqref{main_gfc_theorem_eq}. The comparison steps are as follows:

	\noindent\textbf{Step 1: Comparing $H$ to Gaussian with same variance profile}\\
	 In this step the goal is to show the following Lemma.
	\begin{lemma}
	Recall $\widetilde{F}$ from \eqref{F_tilde_def}, $X = (\mathcal{X}_{ij}^{(\alpha)})_{(\alpha, i, j) \in J}$, $\epsilon_0$ from Lemma \ref{green_function_estimate_lemma}, $\tau = N^{-\epsilon}$, and the assumptions on $\widetilde{\mathcal{L}}_{\tau}$ in \eqref{func_assumptions} from Theorem \ref{main_gfct}. Then there exist $\epsilon_2$ with $0 < \epsilon_2 < \epsilon_0$ such that for $\epsilon \in (0, \epsilon_2)$ the following holds: 

	Assume that $F$ satisfies \eqref{deriv_F_bound_assump}. Then for sufficiently large $N$,
	\begin{equation}
		\abs{\EX^H[\widetilde{F}(X)] - \EX^{W^V}[\widetilde{F}(X)]} \leq N^{-1/8}. 
		\label{first_comparison_step}
	\end{equation} 
	\label{first_comprison_step_lemma}
	\end{lemma}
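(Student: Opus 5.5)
I plan to prove Lemma \ref{first_comprison_step_lemma} by a Lindeberg-type continuous interpolation combined with cumulant expansions, following \cite{schnelli_xu_generalized_22}. Set $H^t := \sqrt{1-t}\,H + \sqrt{t}\,W^V$ for $t \in [0,1]$, with $W^V$ independent of $H$. Since the variance profile is preserved along the path, $H^t$ is generalized Wigner uniformly in $t$, so the isotropic local law \eqref{local_law} and Lemma \ref{green_function_estimate_lemma} hold uniformly in $t$. Writing $X^t$ for $X$ evaluated at $H^t$, we have
\begin{equation*}
\frac{\di}{\di t}\EX[\widetilde F(X^t)] = \sum_{a\le b}\EX\Big[\Big(\frac{w_{ab}}{2\sqrt t}-\frac{h_{ab}}{2\sqrt{1-t}}\Big)\partial_{h^t_{ab}}\widetilde F(X^t)\Big].
\end{equation*}
I will expand both terms with Lemma \ref{cumulant_expansion_lemma}, applied in $h_{ab}$ and in $w_{ab}$ separately. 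The second-order cumulant terms cancel exactly because the variances agree. For the Gaussian part, all higher cumulants vanish. What remains is $\sum_{a\le b}\sum_{k=3}^{l}\frac{c^{(k)}_{ab}(1-t)^{k/2}}{(k-1)!}\EX[\partial^{k}_{ab}\widetilde F(X^t)]$ plus a remainder. Here $|c^{(k)}_{ab}|\le C N^{-k/2}$, and I will take $l$ large, say $l=8$, so that the remainder is negligible using the moment bound \eqref{moment_condition}.

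Next comes derivative bookkeeping. Each $\partial_{ab}$ hits $\widetilde F = F\circ\widetilde\func_\tau$, producing by Faà di Bruno sums of products of $\partial^{m}F$, $\partial^{n}\func^l_\tau$, and derivatives $\partial^{r}_{ab}\mathcal X^{(\alpha)}_{ij}$. By \eqref{deriv_F_bound_assump} and \eqref{func_assumptions}, the $\ell_1$-summed derivatives of $F$ and $\widetilde\func_\tau$ cost at most $N^{C k\epsilon}$, since $\tau^{-1}=N^{\epsilon}$. The summation over the $\#_J$ coordinates is absorbed in the $\ell_1$ norm. Hence it suffices to bound $\max_{(\alpha,i,j)\in J}|\partial^r_{ab}\mathcal X^{(\alpha)}_{ij}|$ uniformly. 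Differentiating \eqref{cal_X_def} gives products of resolvent entries $G_{q_i a}G_{bb}\cdots G_{b q_j}$ integrated over $I$ (length $N^{-2/3+\epsilon}$) and multiplied by $N$. There are also derivatives of $\varrho_\alpha[\Tr(\chi\star\theta_{\tilde\eta})(H)]$, which I will express through $\Tr G$ at scale $\tilde\eta$ via Helffer–Sjöstrand, as in \cite{knowles_yin_2013}. At the edge, the local law gives $|G_{q_i a}|\prec N^{-1/3+C\epsilon}$ for off-diagonal-type entries and $|G_{aa}|\prec 1$. Thus each $\partial^r_{ab}\mathcal X$ is $\prec N^{C\epsilon}$ (at least roughly $N\cdot N^{-2/3}\cdot N^{-2/3}$ after integration), and crude counting for the $k=3$ term gives only $N^2\cdot N^{-3/2}\cdot N^{C\epsilon}=N^{1/2+C\epsilon}$.

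The main obstacle is improving this to $N^{-1/8}$. A naive bound fails because the Ward identity is unavailable: $F$ couples all indices $(\alpha,i,j)$ at once, so the usual argument of summing $\sum_a|G_{q_ia}|^2$ is blocked. I would instead exploit that in the third-order term at least one factor, namely an off-diagonal $G_{q_i a}$ or $G_{b q_j}$ with a free index $a$ or $b$, appears. I will then perform a second cumulant expansion in an off-diagonal entry (the ``unmatched index'' argument) to gain an extra factor of $N^{-1/3}$ per off-diagonal entry. Summed over $a,b$, this gives $N^2\cdot N^{-3/2}\cdot N^{-2/3+C\epsilon}=N^{-1/6+C\epsilon}$ for $k=3$. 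For $k\ge4$ the power counting $N^{2-k/2}\cdot N^{-2/3+C\epsilon}$ is already enough. The $\#_J$-fold sums inside the Faà di Bruno expansion are controlled by the $\ell_1$ bounds, with $N^{C\epsilon}$ losses only, and choosing $\epsilon_2$ small then yields $N^{-1/8}$.

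Finally, high-probability bounds must be converted into bounds in expectation. On the complementary event of probability $N^{-D}$, I will use the deterministic bounds $|G|\le\eta^{-1}$ and $|\widetilde F|\le C$, together with the polynomial derivative bounds. Integrating over $t\in[0,1]$ then concludes \eqref{first_comparison_step}.
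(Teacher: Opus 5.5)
Your overall architecture (continuous interpolation, cumulant expansion with exact cancellation of the second cumulants, naive local-law bounds, and an unmatched-index expansion for the third-cumulant terms) is the one the paper uses. However, the power counting rests on a step that fails.

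The isotropic local law does not give $|G_{q_i a}|\prec N^{-1/3+C\epsilon}$. It gives $G_{q_i a}=\inner{q_i,e_a}\msc+O_\prec(N^{-1/3+C\epsilon})$, and since $Q$ is an arbitrary orthogonal matrix, the deterministic part can be of order one; for $Q=I$ one has $G_{q_a a}=G_{aa}\approx\msc$. This breaks your reduction, which bounds $\max_{(\alpha,i,j)\in J}|\partial^r_{ab}\mathcal X^{(\alpha)}_{ij}|$ pointwise in $(a,b)$ and then sums over $a,b$. Take $Q=I$ and let $J$ contain all diagonal triples. Then for \emph{every} $(a,b)$ the maximum is attained at an aligned configuration ($i=a$), not just for an $O(1/N)$ fraction of pairs. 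There the inner factor is only $\prec N^{1/3}\Psi\sim N^{C\epsilon}$ instead of $N^{1/3}\Psi^2$; for four derivatives you even get the purely diagonal product $\im(G_{aa}G_{bb}G_{aa}G_{bb}G_{aa})$. With this order of operations the fourth-cumulant terms are only $O(N^{C\epsilon})$. The third-cumulant terms are $N^{1/2+C\epsilon}$, and remain at least $N^{1/6}$ after one further gain of $N^{-1/3}$ from an unmatched expansion. So neither your naive bound nor your improved bound closes.

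The missing ingredient is exactly the content of Lemma \ref{local_law_bound_lemma}:
\begin{itemize}
\item Pull out $\sum_{\mathcal J}|\mathfrak w_{\mathcal J}|$ via \eqref{func_assumptions}, but keep the average over the free indices inside. That is, bound $\max_{\mathcal J}N^{-\#_{\mathcal I}}\sum_{\mathcal I}\prod|\mathrm{IF}|$ with the $q$'s fixed.
\item Split each $q$-entry as $\inner{q,e_a}\msc+O(\Psi)$ and use $\frac1N\sum_a\prod_{l=1}^{p}|\inner{q_l,e_a}|\le N^{-\min\{p,2\}/2}$.
\item Repeated derivatives hitting $F$ and $\widetilde{\func}_{\tau}$ create several inner factors that may pair different $q$'s with the same free index. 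The gain therefore saturates at $N^{-1}\le\Psi^3$. This is why \eqref{local_law_bound} carries the losses $\max\{p_j-3,0\}$ and yields only the effective degree $\min\{d+\mathfrak r_2+\mathfrak r_3+\mathfrak c,3\}$. That still suffices for $p+1\ge 4$, and for $p+1=3$ after the unmatched expansion; coinciding free indices are handled separately.
\end{itemize}

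The same averaging is needed inside your unmatched expansion, which you treat as a black box. Expanding $G_{qa}$ by the resolvent identity produces the deterministic term $\inner{e_a,q}\underline{G}\cdots$, which is small only through averaging over $a$. The two leading terms of the subsequent cumulant expansion gain nothing individually: they cancel up to $O(1/N)$ only after replacing $G_{dd}$ by $\msc$ and using \eqref{sum_condition}. Finally, not every degree-one term contains $G_{q_i a}$ or $G_{b q_j}$. In \eqref{second_remaining_case} the $q$'s sit in $\im G_{qq}$, and you must expand in the $G_{ab}$ inside $\Dim$.
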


	To show Lemma \ref{first_comprison_step_lemma}, we interpolate between $W^V$ and $H$ with the flow,
	\begin{equation}
		H^{(1)}(t) := \e^{-\frac{t}{2}} W^V + \sqrt{1 - \e^{-t}} H, \quad t \in \mathbb{R}^+. 
		\label{interpolation_step_1}
	\end{equation} The proof is presented below in Section \ref{first_comparison_section}.

\noindent\textbf{Step 2: Comparing Gaussian with variance profile to $\GOE$}\\
	For the second step we first need to handle the variance on the diagonal of the $\GOE$. As in \cite{schnelli_xu_generalized_22} we define \begin{equation}
		\widetilde{V} := ((1 + \delta_{ij}) V_{ij})_{1 \leq i, j \leq N}. 
	\end{equation} Next we show that this small perturbation from $V$ to $\widetilde{V}$ does not significantly affect our observable: 
	\begin{lemma}
	Recall $\widetilde{F}$ from \eqref{F_tilde_def}, $X = (\mathcal{X}_{ij}^{(\alpha)})_{(\alpha, i, j) \in J}$, $\epsilon_0$ from Lemma \ref{green_function_estimate_lemma}, $\tau = N^{-\epsilon}$, and the assumptions on $\widetilde{\mathcal{L}}_{\tau}$ in \eqref{func_assumptions} from Theorem \ref{main_gfct}. Then there exist $\epsilon_3$ with $0 < \epsilon_3 < \epsilon_0$ such that for $\epsilon \in (0, \epsilon_3)$ the following holds: 

	Assume that $F$ satisfies \eqref{deriv_F_bound_assump}, and choose $\tau = N^{-\epsilon}$. Then for sufficiently large $N$,
	\begin{equation}
		\abs{\EX^{\widetilde{V}}[\widetilde{F}(X)] - \EX^V[\widetilde{F}(X)]} \leq N^{-1/8}.
		\label{diagonal_variance_perturbation_comparison}
	\end{equation} 
	\label{diagonal_variance_perturbation_lemma}
	\end{lemma}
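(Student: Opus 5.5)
We prove Lemma \ref{diagonal_variance_perturbation_lemma} by a continuous Gaussian interpolation between the two variance profiles. Let $W^{V}$ and $W^{\widetilde V}$ be Gaussian matrices with profiles $V$ and $\widetilde V$. The profiles differ only on the diagonal, where $\widetilde V_{ii}-V_{ii}=V_{ii}=O(N^{-1})$. Set $V(s):=V+s(\widetilde V-V)$ for $s\in[0,1]$ and let $W(s)$ be the Gaussian matrix with profile $V(s)$. Only the $N$ diagonal entries change, so by Gaussian integration by parts (the heat equation in the variances),
\begin{equation*}
	\frac{\di}{\di s}\EX^{V(s)}[\widetilde F(X)] = \frac12\sum_{a=1}^N V_{aa}\,\EX\big[\partial_{aa}^2 \widetilde F(X)\big],
\end{equation*}
where $\partial_{aa}$ denotes differentiation in the entry $h_{aa}$. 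It therefore suffices to show that $N^{-1}\sum_a \EX|\partial_{aa}^2\widetilde F(X)|\le N^{-1/8}$ uniformly in $s$. Since $V(s)$ satisfies \eqref{sum_condition} up to an $O(N^{-1})$ error and \eqref{mean_field_condition}, the isotropic local law \eqref{local_law} applies along the whole path, after rescaling if needed.

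The core step is to bound $\partial_{aa}^2\widetilde F(X)$. By the chain rule, $\partial_{aa}\widetilde F=\sum_{l}\sum_{(\alpha,i,j)\in J}(\partial_l F)(\partial_{(\alpha ij)}\func^l_\tau)\,\partial_{aa}\mathcal X^{(\alpha)}_{ij}$, and the second derivative adds terms with two derivatives on $F$ or $\func_\tau$, or two on $\mathcal X$. Differentiating \eqref{cal_X_def} gives $\partial_{aa}G_{q_iq_j}=-G_{q_ia}G_{aq_j}$. It also brings down a derivative of $\varrho_\alpha[\Tr(\chi\star\theta_{\tilde\eta})(H)]$, which is a $\tilde\eta$-smoothed integral of $\im G_{aa}$. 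We then bound every resolvent entry with \eqref{local_law} at $z=E+\ii\eta$. Near the edge, $\im\msc\lesssim N^{-1/3+C\epsilon}$, so $|G_{q_ia}|\prec N^{-1/3+C\epsilon}$ and $|G_{aa}|\prec 1$, and the integral over $I$ contributes a factor $N^{-2/3+\epsilon}$. This yields $\max_{\alpha,i,j}|\partial_{aa}\mathcal X^{(\alpha)}_{ij}|\prec N^{-1/3+C\epsilon}$, and likewise for the second derivatives. For the $\tilde\eta$-scale resolvents in the smoothed counting function, where the local law fails, we will instead use the standard bound through $\eta$-scale quantities via $\im G(E+\ii\tilde\eta)\le(\eta/\tilde\eta)\im G(E+\ii\eta)$, as in \cite{knowles_yin_2013}.

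The main obstacle is the sum over $J$, which may have order $N^2$ terms, so we cannot bound it term by term. For this we use the $\ell^1$ structure in \eqref{func_assumptions}: for each fixed $l$, $\sum_{(\alpha ij)}|\partial_{(\alpha ij)}\func^l_\tau|\le C_1$, and $\sum_{(\alpha ij),(\beta kl)}|\partial^2\func^l_\tau|\le C_2\tau^{-1}=C_2N^{\epsilon}$. Hence every sum over $J$ is bounded by the maximal derivative of $\mathcal X$ times an $\ell^1$ norm. The sum over $l\le K$ is then absorbed by \eqref{deriv_F_bound_assump}, since $\sum_l|\partial_lF|\le N^{100\epsilon}$. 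Combining these,
\begin{equation*}
	|\partial_{aa}^2\widetilde F(X)|\prec N^{C'\epsilon}\big(N^{-2/3}+N^{-1/3}\big),
\end{equation*}
with the dominant contribution coming from the second derivative of $\mathcal X$ and from the squared first derivatives. On the complementary low-probability event we use deterministic bounds: $\|G\|\le\eta^{-1}$ and the boundedness of $\widetilde F$'s derivatives by powers of $N$. Summing over $a$ with the weight $V_{aa}=O(N^{-1})$ and integrating over $s$ gives $|\EX^{\widetilde V}-\EX^{V}|\le N^{-1/3+C'\epsilon}\le N^{-1/8}$ once $\epsilon<\epsilon_3$ is small.

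We must also check that the local-law constants are uniform in $s$. If the available bound turns out weaker, the target $N^{-1/8}$ still leaves ample room.
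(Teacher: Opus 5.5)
\textbf{There is a genuine gap in your central estimate.} Your setup is the paper's: the Gaussian path with profile $V+s(\widetilde V-V)$ has the same law as $H^{(3)}(t)$ with $s=1-\e^{-t}$. In both cases only $\frac12\sum_a V_{aa}\EX[\partial_{aa}^2\widetilde F]$ survives.

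The failing step is the claim $|G_{q_ia}|\prec N^{-1/3+C\epsilon}$, and with it the uniform-in-$a$ bound $\max_{\alpha,i,j}|\partial_{aa}\mathcal X^{(\alpha)}_{ij}|\prec N^{-1/3+C\epsilon}$. The isotropic local law only gives $G_{q_ia}=\inner{q_i,e_a}\msc+O_\prec(\Psi)$. For an arbitrary orthogonal $Q$ the overlap $\inner{q_i,e_a}$ can be of order one. For example, take $Q=I$ and $(\alpha,a,a)\in J$. Then $\partial_{aa}\mathcal X^{(\alpha)}_{aa}\approx-\frac N\pi\int_I\im(G_{aa}^2)\varrho_\alpha\,\di E$ is of order one. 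This matches perturbation theory: $\partial_{h_{aa}}(Nu_\alpha(a)^2)=2Nu_\alpha(a)^2\sum_{\beta\neq\alpha}u_\beta(a)^2/(\lambda_\alpha-\lambda_\beta)\approx-2Nu_\alpha(a)^2$ at the edge. With the correct uniform bound, ``maximal derivative of $\mathcal X$ times an $\ell^1$ norm'' only gives $|\partial_{aa}^2\widetilde F|\prec N^{C\epsilon}$. Since $\sum_a V_{aa}\asymp 1$, this yields no decay. For instance, with $\widetilde F$ built from the smoothed maximum, $\partial^2_{aa}\widetilde F$ need not be small at the index $a$ carrying the maximum; only its average over $a$ is small.

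\textbf{The missing idea is to average over $a$ before maximizing over $J$.}
\begin{enumerate}
\item Since $\partial_\beta F$ and $\partial_{\mathcal J}\func_\tau^{l}$ do not depend on $a$, pull $\frac1N\sum_a$ inside the $J$-sums. Then bound by the $\ell^1$ norms from \eqref{func_assumptions} and \eqref{deriv_F_bound_assump} times $\max_{\mathcal J}\frac1N\sum_a(\cdots)$.
\item Inside, expand $G_{uv}=\inner{u,v}\msc+\mathcal E_{uv}$ and use the $\im$. This gives $|\im(G_{q_ia}G_{aq_j})|\lesssim\Psi\big(\Psi+|\inner{q_i,e_a}|+|\inner{q_j,e_a}|\big)$. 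Hence $|\partial_{aa}\mathcal X^{(\alpha)}_{ij}|\lesssim N^{C\epsilon}\big(N^{-1/3}+|\inner{q_i,e_a}|+|\inner{q_j,e_a}|\big)$, and similarly for the second derivatives.
\item By Cauchy--Schwarz, $\frac1N\sum_a|\inner{q,e_a}|\le N^{-1/2}$ and $\frac1N\sum_a|\inner{q,e_a}\inner{q',e_a}|\le N^{-1}$.
\end{enumerate}
This recovers the averaged bound $N^{-1/3+C\epsilon}$ you were aiming for. It is exactly the bookkeeping of Lemma \ref{local_law_bound_lemma} (the counts $p_j$ and \eqref{inner_power_bounds}), which the paper invokes for this lemma as in the $d_N\le 0$ case.

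A minor point: the local law does not fail at scale $\tilde\eta=N^{-2/3-14\epsilon}\gg N^{-1+\omega}$. It gives $\Psi\prec(N\tilde\eta)^{-1}$ directly, though your monotonicity workaround is also valid.
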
 
	The interpolation, 
	\begin{equation}
		H^{(3)}(t) := \e^{-\frac{t}{2}} W^V + \sqrt{1 - \e^{-t}} W^{\widetilde{V}}, \quad t \in \mathbb{R}^+,
		\label{diagonal_var_interpolation_eq}
	\end{equation} is used to prove Lemma \ref{diagonal_variance_perturbation_lemma}. The proof is relatively simple compared to the other interpolation steps. So we omit the full details and sketch the proof in Appendix \ref{easy_interpolation_lemma_proof_section}. 
	
	The second major step is to compare $W^{\widetilde{V}}$ with the $\GOE$. We wish to show the following Lemma.

	\begin{lemma}
	Recall $\widetilde{F}$ from \eqref{F_tilde_def}, $X = (\mathcal{X}_{ij}^{(\alpha)})_{(\alpha, i, j) \in J}$, $\epsilon_0$ from Lemma \ref{green_function_estimate_lemma}, $\tau = N^{-\epsilon}$, and the assumptions on $\widetilde{\mathcal{L}}_{\tau}$ in \eqref{func_assumptions} from Theorem \ref{main_gfct}. Then there exist $\epsilon_4$ with $0 < \epsilon_4 < \epsilon_0$ such that for $\epsilon \in (0, \epsilon_4)$ the following holds: 

	Assume that $F$ satisfies \eqref{deriv_F_bound_assump}, and choose $\tau = N^{-\epsilon}$. Then for sufficiently large $N$,
	\begin{equation}
	\abs{\EX^{\widetilde{V}}[\widetilde{F}(X)] - \EX^{\GOE}[\widetilde{F}(X)]} \leq N^{-1/4}.	
	\label{second_comparison_step}
	\end{equation} 
	\label{gaussian_to_gaussian_lemma}
	\end{lemma}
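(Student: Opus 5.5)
We interpolate between $W^{\widetilde V}$ and the $\GOE$ through Gaussian ensembles only, so no higher cumulants appear. Let $W^{\GOE}$ be an independent $\GOE_N$ matrix, whose variance profile is $\EX[w_{ij}^2] = (1+\delta_{ij})/N$. Define
\begin{equation*}
	H^{(2)}(t) := \e^{-\frac{t}{2}} W^{\widetilde V} + \sqrt{1-\e^{-t}}\, W^{\GOE}, \quad t \in \R^+ .
\end{equation*}
Along this flow the entries stay Gaussian, and the variance profile $S(t)$ satisfies $\frac{\di}{\di t} S_{ij}(t) = -\e^{-t}\big(\widetilde V_{ij} - (1+\delta_{ij})/N\big)$. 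Since both profiles have row sums $1+1/N$ (diagonal doubled), the matrix $\Delta := \widetilde V - \GOE$-profile has vanishing row sums. Gaussian integration by parts (equivalently Lemma \ref{cumulant_expansion_lemma} with $l=1$, exact) gives
\begin{equation*}
	\frac{\di}{\di t}\EX[\widetilde F(X)] = -\frac{\e^{-t}}{2}\sum_{a \le b} \Delta_{ab}\,(1+\delta_{ab})^{-1}\EX\big[\partial_{ab}^2 \widetilde F(X)\big],
\end{equation*}
where $\partial_{ab}$ is the derivative in the symmetric entry $h_{ab}=h_{ba}$. It then suffices to bound the right side by $N^{-1/4}$ uniformly in $t$, and integrate over $t\in[0,\infty)$; the $\e^{-t}$ factor handles integrability.

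To compute $\partial_{ab}^2\widetilde F(X)$, we expand via the chain rule. First, $\partial_{ab} G_{q_iq_j} = -G_{q_ia}G_{bq_j}-G_{q_ib}G_{aq_j}$. Second, the cutoff factor $\varrho_\alpha[\Tr(\chi\star\theta_{\tilde\eta})(H)]$ is differentiated through $\Tr G(y+\ii\tilde\eta)$, producing $(G^2)_{ab}$-type terms integrated in $y$. Third, we use $\partial_k F$ and $\partial_m \func^l_\tau$. The resulting terms have the form
\begin{equation*}
	\sum_{a,b}\Delta_{ab}\, \EX\Big[\sum \partial F \cdot \partial\func \cdot \prod (\text{Green function entries with indices } a,b,q_i,q_j)\Big].
\end{equation*}

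The key point is that $\Delta_{ab}$ is of size $O(1/N)$ but has zero row sums. For a term whose $(a,b)$-dependence factorises as a deterministic leading part plus fluctuations, we therefore replace each $G_{xa}$ by $\msc \inner{x,e_a}$ plus its local law error from \eqref{local_law}. Terms in which the $a$- and $b$-dependence is only through $\delta_{ab}$ or a constant are killed by $\sum_b \Delta_{ab}=0$. This is exactly the mechanism of \cite{schnelli_xu_generalized_22}, and it is why Step 2 gains a better rate than Step 1.

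For the remaining fluctuation terms, at $\eta = N^{-2/3-6\epsilon}$ near the edge, \eqref{local_law} gives $|G_{xa}-\msc\inner{x,e_a}|\prec N^{-1/3+C\epsilon}$, since $\sqrt{\im\msc/(N\eta)}\sim N^{-1/3+C\epsilon}$. Each derivative costs $\tau^{-1}=N^{\epsilon}$ from \eqref{func_assumptions} and at most $N^{100\epsilon}$ from \eqref{deriv_F_bound_assump}, and the factor $N$ in \eqref{cal_X_def} is offset by the integral over $I$ of length $N^{-2/3+\epsilon}$. Altogether one should get a bound of order $N^{-1/3+C\epsilon}$, so $N^{-1/4}$ follows for $\epsilon<\epsilon_4$ small.

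The main obstacle is the summation over the huge index set $J$ of size up to $N^2$ together with the $K$ components of $\func_\tau$. The $\ell^1$-type derivative bound \eqref{func_assumptions} must be used so that $\sum_{m}|\partial_m\func^l_\tau|\le C_1$ absorbs the sum over $(\alpha,i,j)$. We would therefore bound $\sup_{(\alpha,i,j)}$ of the Green function factor and then use the $\ell^1$ control, rather than using Ward identities. These sup-bounds come from the isotropic law applied to the deterministic vectors $q_i,e_a$, uniformly over $N^{C}$ choices by a union bound. Stochastic domination then has to be converted to expectation bounds using the trivial bound $|G|\le\eta^{-1}$ on the exceptional event.
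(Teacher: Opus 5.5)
\textbf{There is a genuine gap: the power counting that is meant to close your argument is wrong, and the step that actually proves the lemma is missing.}

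Your interpolation and Gaussian integration by parts are fine, and the direction of the flow does not matter. One small inaccuracy: $\widetilde V$ has row sums $1+V_{aa}$, not $1+1/N$. The weight with exactly vanishing row and column sums is $\mathring V=V-\Pi$, which is what appears once the $(1+\delta_{ab})$ factors from the symmetric derivative are tracked.

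Take the term where both derivatives hit the cutoff $\varrho_\alpha$. After using the vanishing row and column sums of $\mathring V$ you are left with
\[
\sum_{a,b}\mathring V_{ab}\,\EX\Big[(\partial_\beta F)\sum_{\mathcal J}(\partial_{\mathcal J}\func_\tau^{\beta})\,\frac N\pi\int_I \im G_{q_iq_j}\,\varrho_\alpha'\,\Dim\big((G_{aa}-\msc)(G_{bb}-\msc)\big)\,\di E\Big],
\]
and the row-sum cancellation cannot remove anything further. Count the sizes:
\begin{itemize}
\item $\sum_{a,b}|\mathring V_{ab}|\sim N$;
\item the prefactor $N|I|\sim N^{1/3+\epsilon}$ is not offset by the length of $I$, but only by the single factor $\im G_{q_iq_j}\prec N^{-1/3+C\epsilon}$;
\item the two remaining fluctuations give $\Psi^2\prec N^{-2/3+C\epsilon}$.
\end{itemize}
The honest bound is therefore $N^{1/3+C\epsilon}$, not $N^{-1/3+C\epsilon}$; this is exactly the naive estimate the paper records as too big. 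Your count drops both the factor $N$ from the $(a,b)$-summation and the $N^{1/3}$ from $N|I|$. Neither the $\ell^1$ control of $\partial\func_\tau$ nor $\|\mathring V\|\le 1-c_0$ changes this. Terms carrying factors $\langle q_i,e_a\rangle$, which the row sums do not remove, are not small either.

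\textbf{What is missing} is the recursive expansion in the inhomogeneously weighted index from \cite{schnelli_xu_generalized_22}, adapted to the $q$-entries.
\begin{enumerate}
\item Rewrite $G_{aa}$ (respectively $G_{ab}$, $G_{aq_i}$) using $1=-z\msc-\msc^2$ and $zG_{ax}=(HG)_{ax}-\delta_{ax}$, then integrate by parts in $h_{ac}(t)$.
\item Check that every resulting term does one of three things:
\begin{itemize}
\item it gains an extra off-diagonal or fluctuation factor, hence a factor $(N\tilde\eta)^{-1}$ via $n_{\sum}-r_{\sum}=d$;
\item it reproduces the original term with $\mathring V$ replaced by $\mathring V V(t)=(1-\e^{-t})\mathring V^2$;
\item for $q$-entries, it replaces $G_{q_ia}$ by $\langle q_i,e_a\rangle\msc$.
\end{itemize}
\item Iterate the non-improving branch $\widetilde k\sim C\log N$ times and kill it with $\|\mathring V^{k}\|\le(1-c_0)^{k}$.
\item Control the whole expansion tree, which has roughly $(Cd\log N)^{d}$ terms at degree $d$. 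Stop either at a large degree $D$ or once four factors $\langle q_{z},e_{x}\rangle$ have appeared, which yield an extra $N^{-2}$.
\end{enumerate}
It is this mechanism, not a single application of the vanishing row sums, that takes the $d=2$ terms from $N^{1/3+C\epsilon}$ down to $N^{-1/3+C\epsilon}$.
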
 

	Let $W = (w_{ij})_{1 \leq i, j \leq N}$ be a matrix sampled from the $\GOE$. We show Lemma \ref{gaussian_to_gaussian_lemma} by using the interpolating flow given by \begin{equation}
		H^{(2)}(t) := \e^{-\frac{t}{2}} W + \sqrt{1 - \e^{-t}} W^{\widetilde{V}}, \quad t \in \mathbb{R}^+. 	
		\label{gaussian_to_gaussian_interpolating_flow}
	\end{equation} The proof is presented in Section \ref{second_comparison_section}.

Equipped with the three Lemmas \ref{first_comprison_step_lemma} -- \ref{gaussian_to_gaussian_lemma}, Theorem \ref{main_gfct} follows directly. 

\begin{proof}[Proof of Theorem \ref{main_gfct}]
Note that the assumptions of Lemmas \ref{first_comprison_step_lemma} -- \ref{gaussian_to_gaussian_lemma} are the same as in Theorem \ref{main_gfct}. Hence, the above Lemmas give that for,
\begin{equation*}
\epsilon_1 := \min(\epsilon_2, \epsilon_3, \epsilon_4),
\end{equation*}
we have for $\epsilon \in (0, \epsilon_1)$ and $\tau = N^{-\epsilon}$ that for sufficiently large $N$,
\begin{equation*}
	\abs{\EX^{H}[\widetilde{F}(X)] - \EX^{\GOE}[\widetilde{F}(X)]} \leq N^{-1/8} + N^{-1/8} + N^{-1/4}.
\end{equation*} Hence, see that \eqref{main_gfc_theorem_eq} is satisfied, thereby finishing the proof.
\end{proof}

\subsection{Interpolation preliminaries}

In this subsection we introduce notation for the interpolating flows and present some results we will need later on. Let $H(t) = (h_{ab}(t))_{a, b = 1}^N, t \in \R^+$ denote any smooth interpolating flow (\eg \eqref{interpolation_step_1} or \eqref{gaussian_to_gaussian_interpolating_flow}). Define the time dependent Green function,
\begin{equation*}
	 G = G(t, z) := (H(t) - zI)^{-1}.\end{equation*}

Introduce the set of basis vectors $u, v$ for which $G_{uv}$ is used,
	\begin{equation*}
		\mathbb{B} := \{e_1, \dots, e_N, q_1, \dots, q_N\}.
	\end{equation*} 
	Further, introduce the random control parameter $\Psi = \Psi(t)$ by setting, \begin{equation}
		\begin{split}
			\Psi(t) & := \sup_{\substack{E \in [E_L, -2 + N^{-2/3 + \epsilon}] \\
			u,v \in \mathbb{B} \\
			1 \leq a \neq b \leq N}}
			\max\Big\{\abs{G_{uv}(t, E + \ii \eta) - \inner{u, v} m_{sc}(E + \ii \eta)}, \abs{\im G_{uv}(t, E + \ii \eta)}, \\
			& \quad \quad  2 \abs{G_{ab} (t, E + \ii \tilde{\eta})}, \abs{2 \im G_{uv}(t, E + \ii \tilde{\eta})}, N^{-1/3}, \im \msc(E + \ii \eta)\Big\}.
		\end{split}
		\label{Psi_def}
	\end{equation}
	
	The $H(t)$ will change depending on what interpolation we consider, but the interpolations we consider all satisfy the isotropic local law, implying the following Lemma, 
	\begin{lemma} Let $H(t)$ be the interpolation \eqref{interpolation_step_1}, \eqref{diagonal_var_interpolation_eq}, or \eqref{gaussian_to_gaussian_interpolating_flow}, and recall $\Psi$ from \eqref{Psi_def} and $\eta, \tilde{\eta}$ from \eqref{eta_defs}. Then, uniformly for $t \in \R_{\geq 0}$,
	\begin{align}
		\Psi(t) \prec \frac{1}{N \tilde{\eta}}.
		\label{Psi_bounds}
	\end{align}
		Further, for any (large) $D > 0$ it holds uniformly for $t \in \R^+$ that, \begin{equation}
		\sup_{\substack{E \in [E_L, -2 + N^{-2/3 + \epsilon}] \\
			u,v \in \mathbb{B}}} \max(\abs{G_{uv}(t, E + \ii \eta)}, \abs{G_{uv}(t, E + \ii \tilde{{\eta}})}) \leq 1, 
			\label{abs_bdd_by_one}
	\end{equation} with probability $\geq 1 - N^{-D}$ for $N \geq N_0(D, H)$.
	\label{uniform_local_law}
	\end{lemma}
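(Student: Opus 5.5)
The proof of Lemma \ref{uniform_local_law} is organized in three steps.

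\emph{Step 1: a local law for each fixed $t$.} For every fixed $t\ge 0$, each of the interpolations \eqref{interpolation_step_1}, \eqref{diagonal_var_interpolation_eq}, \eqref{gaussian_to_gaussian_interpolating_flow} is a generalized Wigner matrix in the sense of Definition \ref{general_wigner_def}. Its entries are independent and centered. By independence of the two summands, its variance profile is $\e^{-t}V^{(1)}+(1-\e^{-t})V^{(2)}$, where $V^{(1)},V^{(2)}\in\{V,\widetilde V, (1+\delta_{ij})/N\}$. A convex combination preserves \eqref{mean_field_condition}, with constants $\min$ and $2\max$ of the originals. It also preserves the moment bounds \eqref{moment_condition} uniformly in $t$, since $\e^{-t/2}$ and $\sqrt{1-\e^{-t}}$ are at most $1$. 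The row-sum condition \eqref{sum_condition} holds exactly for $V$ and holds up to $O(1/N)$ for the diagonal-modified profiles. The isotropic local law of \cite{bloemendal_erdos_knowles_yau_yin_2014} tolerates this $O(1/N)$ perturbation, or alternatively one can absorb it by a deterministic rescaling of $H$ by $1+O(1/N)$, which moves $G$ only negligibly. Hence \eqref{local_law} holds for $H(t)$, with constants uniform in $t$.

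\emph{Step 2: the bound on $\Psi$.} Apply \eqref{local_law} with $u,v\in\mathbb{B}$; this is $4N^2$ deterministic pairs, so a union bound costs nothing in $\prec$. Use spectral parameters $z=E+\ii\eta$ and $E+\ii\tilde\eta$ with $E$ in the edge window. The choice $\omega$ small relative to $\epsilon$ ensures $z\in\mathbb{S}$ because $\tilde\eta=N^{-2/3-14\epsilon}\gg N^{-1+\omega}$.

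Near the edge, \eqref{msc_imaginary_bounds} gives $\im\msc\lesssim \sqrt{\kappa+b}\lesssim N^{-1/3+\epsilon/2}$. Therefore the error term in \eqref{local_law} is at most $\sqrt{N^{-1/3+\epsilon}/(Nb)}+1/(Nb)\lesssim 1/(N\tilde\eta)$, since $N\tilde\eta=N^{1/3-14\epsilon}$. This handles each term in \eqref{Psi_def}:
\begin{itemize}
\item the deviations $\abs{G_{uv}-\inner{u,v}\msc}$, at both $\eta$ and $\tilde\eta$;
\item the off-diagonal $\abs{G_{ab}}$, for which $\inner{e_a,e_b}=0$;
\item $\abs{\im G_{uv}}\le \abs{\im\msc}+\text{error}$;
\item the deterministic terms $N^{-1/3}$ and $\im\msc(E+\ii\eta)$, which are both $\le 1/(N\tilde\eta)$.
\end{itemize}

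To pass from fixed $E$ to the supremum over $E$, take a grid in $E$ of mesh $N^{-10}$ and use the Lipschitz bound $\abs{\partial_z G}\le b^{-2}\le N^{2}$ to interpolate between grid points.

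\emph{Step 3: uniformity in $t$, and the second claim.} The main obstacle is uniformity over the noncompact range $t\in\R_{\ge 0}$. The constants in the local law depend only on $\Cinf,\Csup,(C_k)$, which are uniform in $t$, so for each fixed $t$ the $N_0(\epsilon,D)$ is uniform. This already gives the "$\sup_t\Prob$" notion of uniformity defined after Definition \ref{stochastic_domination_def}, and that is all the statement requires. If a simultaneous-in-$t$ version is needed, do the following. Take a grid of $N^{10}$ times in $[0,10\log N]$. Bound the time derivative of $H(t)$ by $N^{C}$ with overwhelming probability, and use $\partial_t G=-G(\partial_t H)G$ with $\norm{G}\le b^{-1}$. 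For $t>10\log N$, $H(t)$ differs from its $t=\infty$ limit by $O(N^{-4})$ in norm.

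Finally, \eqref{abs_bdd_by_one} follows from Step 2. On the high-probability event, $\abs{G_{uv}}\le\abs{\msc}+N^{\epsilon'}/(N\tilde\eta)$, and this is $\le 1$ once we know $\abs{\msc}\le 1-c$ strictly there. Here one needs care, because $\abs{\msc}\to1$ at the edge. Quantitatively, $\abs{\msc(z)}\le 1-c\sqrt{\kappa+b}$ outside the bulk, while the error is $N^{-1/3+15\epsilon}\ll\sqrt{b}\ge N^{-1/3-7\epsilon}$. That comparison only works if $\epsilon$ is small. Hence I expect this borderline comparison, together with the uniformity in $t$, to be the delicate points; everything else is a direct application of \eqref{local_law}.
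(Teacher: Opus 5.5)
Your treatment of \eqref{Psi_bounds} is exactly the paper's argument and is fine. That covers applying the isotropic local law at each fixed $t$, the union bound over $\mathbb{B}$, the $E$-grid, and $t$-uniformity via the $t$-uniform constants $\Cinf,\Csup,C_k$.

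The gap is the last step, where you try to get \eqref{abs_bdd_by_one} with the constant exactly $1$.
\begin{itemize}
\item The comparison you write goes the wrong way. We have $N^{-1/3+15\epsilon}\gg N^{-1/3-7\epsilon}$ for every $\epsilon>0$, so taking $\epsilon$ small does not help.
\item The input $1-\abs{\msc}\gtrsim\sqrt{\kappa+b}$ is false inside the spectrum. From $\msc=-1/(z+\msc)$ one gets $1-\abs{\msc}^2=b/(b+\im\msc)\asymp b/\sqrt{\kappa+b}$ for $E\in(-2,-2+N^{-2/3+\epsilon}]$. At $b=\tilde\eta$, $\kappa=N^{-2/3+\epsilon}$ this is about $N^{-1/3-14.5\epsilon}$.
\item On the whole window, $1-\abs{\msc}\lesssim N^{-1/3+\epsilon/2}$. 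The local-law error bound is of order $N^{-1/3+6\epsilon}$ at $b=\eta$ and $N^{-1/3+14\epsilon}$ at $b=\tilde\eta$. So for $u=v$, no argument of this kind can give $\abs{G_{uu}}\le 1$.
\end{itemize}

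In fact the bound with constant exactly $1$ fails, because $\eta,\tilde\eta\ll N^{-2/3}$ resolve individual edge eigenvalues. Pick $a$ with $N\abs{u_1(a)}^2\ge 1$, which exists since $\sum_a\abs{u_1(a)}^2=1$. Set $E=\lambda_1-\eta$, which lies in the window by rigidity. All $\lambda_\alpha-E$ are positive, and $x\mapsto x/(x^2+b^2)$ is decreasing in $b$ for $x>0$. Compare with $w=E+\ii N^{-2/3+\epsilon}$. There the local law and $\msc(w)=1+O(N^{-1/3+\epsilon/2})$ give $\mathrm{Re}\,G_{aa}(w)\ge 1-N^{-1/3+\epsilon}$. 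Hence, with overwhelming probability,
\[
\mathrm{Re}\,G_{aa}(E+\ii\eta)\ \ge\ \mathrm{Re}\,G_{aa}(w)+\frac{1}{N}\Big(\frac{1}{2\eta}-\frac{\eta}{N^{-4/3+2\epsilon}}\Big)\ \ge\ 1-N^{-1/3+\epsilon}+\tfrac{1}{3}N^{-1/3+6\epsilon}\ >\ 1 .
\]

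So what should be proved is an $O(1)$ bound. For example, on the event of your Step 2, $\abs{G_{uv}}\le\abs{\inner{u,v}}\abs{\msc}+N^{-1/3+15\epsilon}\le 2$. This is what the paper's ``follows directly from the local law'' actually yields. Every later use of \eqref{abs_bdd_by_one} only needs a bound by a constant $C$, so replace your final argument by this weaker statement.

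Two smaller points:
\begin{itemize}
\item Your rescaling alternative in Step 1 removes the $O(1/N)$ row-sum defect only when it is $j$-independent, as for the $\GOE$. It does not work for $\widetilde V$, where $\sum_i\widetilde V_{ij}=1+V_{jj}$.
\item In your optional simultaneous-in-$t$ argument, $\partial_t H$ is not $O(N^C)$ near $t=0$; it behaves like $t^{-1/2}$. Use H\"older continuity of $t\mapsto H(t)$ instead.
\end{itemize}
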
	
	\begin{proof}
		For a fixed time $t$ both \eqref{Psi_bounds} and \eqref{abs_bdd_by_one} follow directly from Lemma \ref{local_law}. The uniformity for $t \in \R^+$ follows from that there exist constants $\Cinf, \Csup$, and $(C_k)_{k=1}^{\infty}$ such that $H(t)$ satisfies the conditions \eqref{mean_field_condition} and \eqref{moment_condition} uniformly in $t$.  
	\end{proof}
	We will use Lemma \ref{uniform_local_law} several times throughout the paper without explicitly referring to it.

	Next we introduce the corresponding time dependent analogues of the observables introduced previously. First, introduce the time dependent version of $\mathcal{X}_{ij}^{(\alpha)}$,
	\begin{align}
		\mathcal{X}_{ij}^{(\alpha)}(t) & := \frac{N}{\pi} \int_{I} \im G_{q_i q_j}(t, E + \ii \eta) \varrho_{\alpha}[\Tr (\chi \star \theta_{\tilde{\eta}})(H(t))] \di E.
	\end{align}		
From here on we will usually omit the arguments of $G$. Further, introduce the notation, \begin{align}
	Y(t) & :=  \int_{E_L}^{E^-} \im \sum_j G_{jj} (t, y + \ii \tilde{\eta}) \di y, \notag \\
	X(t) & := (\mathcal{X}_{ij}^{(\alpha)}(t))_{(\alpha, i, j) \in J}. \notag \\
\end{align} 

From the local law we see that both $\mathcal{X}_{ij}^{(\alpha)}$ and $Y(t)$ are $O(1)$ (up to factors of $N^{C \epsilon}$). This can be seen since the size of the integration interval $I$ is $O(N^{-2/3})$, and both $\im G_{q_i q_j}$ and $\im G_{jj}$ are of size $O(N^{-1/3})$ (up to factors of $N^{C \epsilon}$). 

Using the new notation, the observable we compare for our generalized Wigner matrix and the $\GOE$ in \eqref{main_gfc_theorem_eq} is precisely $\EX[\widetilde{F}(X(t))]$ for $t = 0$ and $t = \infty$, respectively. Our strategy of bounding the difference $|\EX[\widetilde{F}(X(\infty))] -  \EX[\widetilde{F}(X(0))]|$ is to bound the derivative $\abs{\frac{\di}{\di t} \EX[\widetilde{F}(X(t))]}$ and then integrate. 

Hence, we wish to compute the time derivative $\frac{\di}{\di t} \EX[\widetilde{F}(X(t))]$. To do this we collect the time derivatives of the different components, $Y(t)$ and $\mathcal{X}_{ij}^{(\alpha)}(t)$. We recall first the derivative rules for $G_{uv}$ (using the notation $G_{uv} = \inner{u, G v}$): \begin{equation}
	\frac{\partial G_{uv}}{\partial h_{ab}(t)} = - \frac{1}{1 + \delta_{ab}} (G_{ua} G_{bv} + G_{ub} G_{av}).
	\label{derivative_rule_gen}
\end{equation} 
Next, introduce the operator $\Dim$ for evaluating integrals. It is defined through \begin{equation*}
\Dim (f(t, z)) := \im f(t, E^- + \ii \tilde{\eta}) - \im f(t, E_L + \ii \tilde{\eta}). 
\end{equation*} 
Using $\Dim$, we have \begin{equation}
	\begin{split}
	\frac{\partial}{\partial h_{ab}(t)} Y(t) & = \im \int_{E_L}^{E^-} \sum_{j=1}^N \frac{\partial}{\partial h_{ab}(t)} G_{jj}(y) \di y \\
	& = - \frac{2}{1 + \delta_{ab}} \im \int_{E_L}^{E^-} \sum_{j} G_{ja}(y) G_{bj}(y) \di y. \\
	& = -  \frac{2}{1 + \delta_{ab}} \Dim G_{ab}.
	\end{split} 
	\label{Y_deriv_rule}
\end{equation} 
Next we have \begin{equation}
	\begin{split}
	\frac{\partial \mathcal{X}_{ij}^{(\alpha)}(t)}{\partial h_{ab}(t)} & = \frac{N}{\pi} \int_I (\frac{\partial \im G_{q_i q_j}}{\partial h_{ab}(t)}) \varrho_{\alpha}(Y(t)) + \im G_{q_i q_j} \varrho_{\alpha}'(Y(t)) \frac{\partial Y(t)}{\partial h_{ab}(t)} \di E \\
	& = - \frac{2}{1 + \delta_{ab}} \frac{N}{\pi} \int_I (\im G_{q_i a} G_{b q_j}) \varrho_{\alpha}(Y(t)) \di E - \frac{2}{1 + \delta_{ab}} \frac{N}{\pi} \int_I \im G_{q_i q_j} \varrho_{\alpha}'(Y_t)  (\Dim G_{ab})
	\di E.
	\end{split}
	\label{cal_X_deriv_rule}
\end{equation} 
Additionally, we need to compute the result of $\frac{\partial}{\partial h_{ab}(t)}$ acting on $\partial_{\bar{\mathfrak{j}}} \func_{\tau}^{l} (X(t))$ (where $\bar{\mathfrak{j}}$ is a multi-index). In this case we have \begin{equation}
	\frac{\partial}{\partial h_{ab}(t)} \partial_{\bar{\mathfrak{j}}} \func_\tau^l (X(t)) = \sum_{\mathfrak{i} \in J} \partial_{\mathfrak{i}, \bar{\mathfrak{j}}} \func_\tau^l(X(t)) \frac{\partial \mathcal{X}_{\mathfrak{i}}}{\partial h_{ab}(t)},
	\label{func_deriv_rule}
 \end{equation} where $\mathfrak{i}$ is a fresh multi-index $\in J$, and by $\mathcal{X}_{\mathfrak{i}}$ we mean $\mathcal{X}_{ij}^{(\alpha)}$ given by $\mathfrak{i} = (\alpha, i, j)$. By $\mathfrak{i}, \bar{\mathfrak{j}}$ we mean the concatenation, \ie $\partial_{\mathfrak{i}, \bar{\mathfrak{j}}} = \partial_{\mathfrak{i}} \partial_{\bar{\mathfrak{j}}}$. 

On a high level we can now expand $\dt \EX[\widetilde{F}(X(t))]$ as,
\begin{equation}
		\dt \EX[\widetilde{F}(X(t))] = \sum_{a \leq b} \EX\left[\dot{h}_{ab}(t) \frac{\partial \widetilde{F}(X(t))}{\partial h_{ab}(t)}\right] = \sum_{a,b} \frac{1 + \delta_{ab}}{2} \EX \left[\dot{h}_{ab}(t) \frac{\partial \widetilde{F}(X(t))}{\partial h_{ab}(t)}\right].
		\label{basic_time_derivative_computation}
\end{equation} The last equality in \eqref{basic_time_derivative_computation} follows from $H$ being symmetric. The reason for converting $\sum_{a \leq b}$ to $\sum_{a,b}$ is that it slightly simplifies the computations below. Below in Sections \ref{first_comparison_section} and \ref{second_comparison_section} we will cumulant expand the last expression in \eqref{basic_time_derivative_computation} and use the derivative rules introduced above to expand it further.

\section{Smoothed order statistics and proof of Theorem \ref{main_result}}

\label{smoothed_order_statistics_section}
A key component in the proof of Theorem \ref{main_result} is the following smoothed order statistic functionals we introduce below. In previous works on extreme order statistics, notably in \cite{Landon2020,BenigniL.2022Odfg}, the LogSumExp functional with parameter $\tau$, $\lse_{\tau} : \R^n \to \R$, \begin{equation*}
	\lse_{\tau}(x_1,\dots,x_n) := \tau \log \sum_{i=1}^n \e^{x_i / \tau}, 
\end{equation*} was used as a smoothed max to compare extreme order statistics. The $\lse_{\tau}$ satisfies the assumptions \eqref{func_assumptions}, and it is also very close to $\max_{i} x_i$, the error is $O(\tau \log n)$. However, to estimate order statistics that are not the largest (or smallest) the $\lse_{\tau}$ is not very effective. The traditional approach to approximate the $k$-th largest value of a vector $(x_i)_{i=1}^n$ is to consider, \begin{equation}
	\tau \log \sum_{i_1 < \cdots < i_k} \e^{\sum_{j=1}^k x_{i_j}/ \tau} - \tau \log \sum_{i_1 < \cdots < i_{k-1}} \e^{\sum_{j=1}^{k-1} x_{i_j}/ \tau}.
	\label{logsumexp_kth_largest}
\end{equation} The intuition behind the above expression is that the first term estimates the largest sum consisting of $k$ $x_i$'s, which is exactly $x_{(1)} + x_{(2)} + \dots + x_{(k)}$ and the second term estimates the largest sum consisting of $k-1$ $x_i$'s, which is exactly $x_{(1)} + x_{(2)} + \dots + x_{(k-1)}$. So the difference is precisely $x_{(k)}$. This works well for small $k$, but the sums $\sum_{i_1 < \cdots < i_k}$ contains $O(n^k)$ terms, so the error between $x_{(k)}$ and \eqref{logsumexp_kth_largest} becomes $O(\tau k \log n)$. To \eg estimate the median using this approach is therefore not feasible, one would have to choose $\tau \ll n^{-1}$ to make the error small. 

However, we wish to estimate the $k$-th largest entry of a vector in $\mathbb{R}^n$, for any $k$. To accomplish this, we introduce the $k/n$-th quantile of the sigmoid-smoothed empirical cdf, $t^*_{\tau, k} : \mathbb{R}^{n} \to \mathbb{R}$, that approximates the $k$-th largest value $x_{(k)}$ of a vector $\bar{x} \in \mathbb{R}^n$. The same definition appears frequently in the statistics literature, see for instance \cite{sheather_marron_1990, azzalini1981}, where it is used as a quantile estimator, but they consider other kernels as well as the sigmoid we use. We have not found any paper using it to explicitly estimate the $k$-th largest value, and we have not found any version of Lemma \ref{t_star_lemma} in the existing literature. 
\begin{definition}
Let $\sigma : \mathbb{R} \to \mathbb{R}$ be the sigmoid function, $\sigma(x) = 1/(1 + \e^{-x})$, and denote $\bar{x} = (x_1, \dots, x_n)$. Then $t^*_{\tau, k}(\bar{x})$ is defined as the unique $t \in \mathbb{R}$ solving the equation, \begin{equation}
	\sum_{i=1}^n \sigma\Big(\frac{t - x_i}{\tau}\Big) = n - k + \frac{1}{2}. 
	\label{t_star_def}
\end{equation} 
From the implicit function theorem we get that $t^*_{\tau, k} : \mathbb{R}^n \to \mathbb{R}$ is well-defined and $C^{\infty}$, since $\sigma$ is $C^{\infty}$ and strictly increasing.
\end{definition}
The following Lemma states the key properties of $t^*_{\tau, k}$. 
\begin{lemma}
	For any $\bar{x} \in \mathbb{R}^n$ we have, \begin{equation}
	\abs{t^*_{\tau, k}(\bar{x}) - x_{(k)}} \leq \tau \log (2 n). 
	\label{t_star_good_approx_eq}
	\end{equation}	Secondly, for a fixed integer $l$, there exists a constant $C_l$ not depending on $n$ or $k$ such that \begin{equation}
	\sum_{j_1, \dots, j_l} \abs{\frac{\partial^l}{\partial x_{j_1} \cdots \partial x_{j_l}} t^*_{\tau, k}(\bar{x})} \leq \frac{C_l}{\tau^{l-1}},
	\label{t_star_derivative_l1_bound}
	\end{equation} where the indices $j_1, \ldots, j_l$ are iterated from $1$ to $n$.
	\label{t_star_lemma}
\end{lemma}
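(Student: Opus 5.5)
The plan is to prove the two assertions of Lemma \ref{t_star_lemma} separately. The first follows from monotonicity of the left side of \eqref{t_star_def}. The second follows from implicit differentiation, organised so that the $\ell^1$ sums factorise.

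\emph{Approximation bound \eqref{t_star_good_approx_eq}.} Put $S(t):=\sum_i \sigma((t-x_i)/\tau)$. It is continuous and strictly increasing, with limits $0$ and $n$, so it suffices to evaluate $S$ at $t_\pm := x_{(k)}\pm\tau\log(2n)$.

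At $t_+$, at least $n-k+1$ of the $x_i$ satisfy $x_i\le x_{(k)}$. Each of these contributes $\sigma(\cdot)\ge\sigma(\log 2n)=1-\frac{1}{2n+1}$. Hence
\begin{equation*}
S(t_+)\ge (n-k+1)-\frac{n-k+1}{2n+1}>n-k+\tfrac12 .
\end{equation*}
At $t_-$, at least $k$ entries satisfy $x_i\ge x_{(k)}$, and each contributes at most $\sigma(-\log 2n)=\frac{1}{2n+1}$. Bounding the remaining $n-k$ terms by $1$ gives
\begin{equation*}
S(t_-)\le n-k+\frac{k}{2n+1}<n-k+\tfrac12 .
\end{equation*}
By monotonicity of $S$, we get $t_-<t^*_{\tau,k}<t_+$, which is \eqref{t_star_good_approx_eq}.

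\emph{Derivative bounds \eqref{t_star_derivative_l1_bound}.} Write $y_i:=(t^*-x_i)/\tau$, $s_i:=\sigma'(y_i)>0$ and $w_j:=s_j/\sum_i s_i$. Differentiating \eqref{t_star_def} in $x_j$ gives
\begin{equation*}
\partial_j t^* = w_j, \qquad \partial_j y_i=\frac{w_j-\delta_{ij}}{\tau}.
\end{equation*}
The $w_j$ are positive and sum to one, so the case $l=1$ holds with $C_1=1$. Two facts drive the higher orders:
\begin{itemize}
\item[(i)] Every derivative $\sigma^{(m)}$ is a polynomial in $\sigma$ times $\sigma(1-\sigma)=\sigma'$. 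Hence $|\sigma^{(m)}(y)|\le C_m\sigma'(y)$.
\item[(ii)] For every $i$, $\sum_j |w_j-\delta_{ij}|\le 2$.
\end{itemize}
I would prove by induction on $l$ that $\partial_{j_1}\cdots\partial_{j_l}t^*$ equals $\tau^{-(l-1)}$ times a sum of $O_l(1)$ terms. Each term is a product of ``blocks''
\begin{equation*}
A=\frac{\sum_i \sigma^{(m)}(y_i)\prod_{r\in B}\gamma_r(i,j_r)}{\sum_i s_i},
\end{equation*}
where each factor $\gamma_r(i,j)$ is either $\delta_{ij}$ or $w_j-\delta_{ij}$. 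The sets $B$ of the blocks in a term partition $\{1,\dots,l\}$, and $m\le l$. Note that $w_j$ itself is such a block, with $m=1$ and $\gamma=\delta$.

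The inductive step is a product-rule check. An extra $\partial_{j}$ can hit one of three things.
\begin{itemize}
\item It can hit $\sigma^{(m)}(y_i)$. This produces $\sigma^{(m+1)}(y_i)(w_j-\delta_{ij})/\tau$, which adjoins $j$ to that block.
\item It can hit the denominator $\sum_i s_i$. This produces $-\tau^{-1}A\cdot\frac{\sum_i\sigma''(y_i)(w_j-\delta_{ij})}{\sum_i s_i}$, i.e.\ a new block $\{j\}$.
\item It can hit a factor $w_{j_r}$ inside some $\gamma_r$. Since $w_{j_r}$ is itself a block, its derivative is again of block form and merges into the existing term.
\end{itemize}
In every case a factor $\tau^{-1}$ is produced and the partition structure is preserved.

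With this structure, the $\ell^1$ sum factorises over blocks. For a single block, (i) and (ii) give
\begin{equation*}
\sum_{(j_r)_{r\in B}}|A|\le \frac{\sum_i C_m s_i\,2^{|B|}}{\sum_i s_i}=C_m2^{|B|}.
\end{equation*}
Here the factors $\gamma=\delta$ are summed trivially, with $\sum_j\delta_{ij}=1$. Multiplying over blocks and summing over the $O_l(1)$ terms yields \eqref{t_star_derivative_l1_bound} with constants independent of $n$ and $k$.

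The main obstacle is the bookkeeping in the inductive step: one has to verify that differentiating a factor $w_{j_r}$ sitting inside some $\gamma_r$ keeps the term in the block class without spoiling the partition structure. I would handle this by treating $w_{j}$ uniformly as the block with $m=1$ and $\gamma=\delta$, so that every differentiation reduces to the two elementary rules above.
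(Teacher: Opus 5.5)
Your proof is correct and follows essentially the same route as the paper. Part one is the same comparison of $\sum_i\sigma((t-x_i)/\tau)$ at $x_{(k)}\pm\tau\log(2n)$; the paper argues by contradiction, you argue directly by monotonicity. Part two rests on the same ingredients: $\partial_j t^*_{\tau,k}=w_j$ with $\sum_j w_j=1$, the identity $\sigma^{(m)}=\sigma'P_{m-1}(\sigma)$, and an induction on the structure of the higher derivatives. Your blocks $\sum_i\sigma^{(m)}(y_i)\prod_r\gamma_r/\sum_i s_i$ are exactly the paper's $\tau^{m-1}\sum_i p_i v_i^{(m-1)}\prod_r\gamma_r$ in different notation.

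Your partition bookkeeping is, if anything, slightly more careful than the paper's. The paper runs its induction on the signed sum $\sum_{j_1,\dots,j_l}\partial^l t^*_{\tau,k}$ and leaves the passage to absolute values implicit, whereas you track the free indices explicitly. The only imprecision is that blocks with empty index set do arise, e.g.\ from differentiating the $w_j$ in $\sum_i\sigma''(y_i)(w_j-\delta_{ij})/\sum_i s_i$, so the sets $B$ are not strictly a partition of $\{1,\dots,l\}$. These empty blocks are bounded by $C_m$ and do not affect the conclusion.
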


\begin{proof}
We begin by showing \eqref{t_star_good_approx_eq}. First, assume that $t^*_{\tau, k}(\bar{x}) > x_{(k)} + \tau \log (2 n)$. We will show that this leads to a contradiction. We use \eqref{t_star_def}, which defines $t^*_{\tau, k}$, 
\begin{equation*}
	\begin{split}
\sum_{i=1}^n \sigma \Big(\frac{t^*_{\tau, k} - x_i}{\tau}\Big) & =\sum_{i=1}^n \sigma \Big(\frac{t^*_{\tau, k} - x_{(i)}}{\tau}\Big) \\
& = \sum_{i=1}^{k-1} \sigma \Big(\frac{t^*_{\tau, k} - x_{(i)}}{\tau}\Big) + \sum_{i=k}^n \sigma \Big(\frac{t^*_{\tau, k} - x_{(i)}}{\tau}\Big) \\
& \geq 0 + \sum_{i=k}^n \sigma \Big(\frac{t^*_{\tau, k} - x_{(k)}}{\tau}\Big) \\
& > \sum_{i=k}^n \sigma (\log (2n)). 
\end{split}
\end{equation*} Using that $\sigma(x) \geq 1 - \e^{-x}$ we can continue to lower bound the above expression and get, \begin{equation*}
\begin{split}
\sum_{i=k}^n \sigma (\log (2n)) & \geq \sum_{i=k}^n 1 - \e^{-\log(2n)} \\
& = (n-k+1) \big(1 - \frac{1}{2n}\big) \\
& \geq n - k + \frac{1}{2}. 
\end{split}
\end{equation*} Combining the two computations above we obtain that $\sum_{i=1}^n \sigma \Big(\frac{t^*_{\tau, k} - x_i}{\tau}\Big) > n - k + \frac{1}{2}$, which contradicts \eqref{t_star_def} and shows that $t^*_{\tau, k}(\bar{x}) > x_{(k)} + \tau \log (2 n)$ is impossible. 

The proof in the other direction is analogous. Assume $t^*_{\tau, k}(\bar{x}) < x_{(k)} - \tau \log (2 n)$. Then, \begin{equation*}
	\begin{split}
	\sum_{i=1}^n \sigma \Big(\frac{t^*_{\tau, k} - x_i}{\tau}\Big) & = \sum_{i=1}^k \sigma \Big(\frac{t^*_{\tau, k} - x_{(i)}}{\tau}\Big) + \sum_{i=k+1}^n \sigma \Big(\frac{t^*_{\tau, k} - x_{(i)}}{\tau}\Big) \\
	& \leq \sum_{i=1}^k \sigma \Big(\frac{t^*_{\tau, k} - x_{(i)}}{\tau}\Big) + \sum_{i=k+1}^n \sigma \Big(\frac{t^*_{\tau, k} - x_{(i)}}{\tau}\Big) \\
	& \leq \sum_{i=1}^k \sigma \Big(\frac{t^*_{\tau, k} - x_{(k)}}{\tau}\Big) + n-k \\
	& < k \sigma (-\log (2n)) + n-k \\
	& \leq n-k+\frac{1}{2}. 
	\end{split}
\end{equation*} In the last step we used that $\sigma(x) \leq \e^{x}$. So the proof of \eqref{t_star_good_approx_eq} is done.

Now we show \eqref{t_star_derivative_l1_bound}. Define $\mathcal{F}_{\tau, k} : \mathbb{R}^{n + 1} \to \mathbb{R}$, as $$\mathcal{F}_{\tau, k}(t, \bar{x}) := \sum_{i=1}^n \sigma\Big(\frac{t - x_i}{\tau}\Big) - \big(n - k + \frac{1}{2}\big), $$ by the implicit function theorem, the first order partial derivatives of $t^*_{\tau, k}$ are given by \begin{equation*}
	\frac{\partial}{\partial x_i} t^*_{\tau, k} = \frac{- \big( \frac{\partial}{\partial x_i} \mathcal{F}_{\tau, k}\big) (t^*_{\tau, k}, \bar{x})}{- \big( \frac{\partial}{\partial t} \mathcal{F}_{\tau, k}\big) (t^*_{\tau, k}, \bar{x})} = \frac{\sigma'\Big(\frac{t^*_{\tau, k} - x_i}{\tau}\Big)}{\sum_{j=1}^n \sigma'\Big(\frac{t^*_{\tau, k} - x_j}{\tau}\Big)}.
\end{equation*}
Introduce $z_i :=\frac{t^*_{\tau, k} - x_i}{\tau}$, and $p_i := \frac{\partial}{\partial x_i} t^*_{\tau, k}$. From the definition of $t^{*}_{\tau, k}$ it is easy to check that,
\begin{equation}
	t^{*}_{\tau, k}(x_1 + d, \ldots, x_n + d) = t^{*}_{\tau, k}(x_1, \ldots, x_n) + d.
	\label{shift_invariance}
\end{equation}
From \eqref{shift_invariance} it follows that (note $\abs{p_i} = p_i$ since $p_i$ is non-negative), 
\begin{equation}
	\sum_{i=1}^n |p_i| = 1,
	\label{first_derivative_sum}
\end{equation} since $\sum_{i=1}^n p_i$ is $\sqrt{n}$ times the directional derivative of $t^*_{\tau, k}$ in the direction $(1, \ldots, 1)/\sqrt{n}$.    

We note the derivative, \begin{equation*}
\frac{\partial z_i}{\partial x_l} = \frac{1}{\tau} (p_l - \delta_{il}). 
\end{equation*} Next, note that since $\sigma' = \sigma (1 - \sigma)$, the higher derivatives of $\sigma$ can be expressed as, \begin{equation}
\sigma^{(q+1)}(x) = \sigma'(x) P_{q} (\sigma(x)),
\label{P_q_def}
\end{equation} for a polynomial $P_{q}$ of degree $q$. Next introduce, \begin{equation*}
v_i^{(q)} := \frac{1}{\tau^{q}} P_{q}(\sigma(z_i)).
\end{equation*} Note that $v_i^{(q)}$ does not denote the $q$-th  derivative. Since $0 < \sigma < 1$, we have, \begin{equation}
|v_i^{(q)}| \leq \frac{C_q}{\tau^{q}},
\label{v_bound}
\end{equation} for $C_q := \sup_{x \in [0, 1]} P_q(x)$. Now we are ready to compute the derivatives of $p_i$, \begin{equation}
 \begin{split}
	\frac{\partial}{\partial x_l} p_i & = \frac{\partial}{\partial x_l} \frac{\sigma'(z_i)}{\sum_{j} \sigma'(z_j)} \\
	& = \frac{\sigma'(z_i) P_1(\sigma(z_i)) \frac{\partial z_i}{\partial x_l} \big(\sum_{j=1} \sigma'(z_j)\big) - \sigma'(z_i) \sum_{j} \sigma'(z_j) P_1(\sigma(z_j)) \frac{\partial z_j}{\partial x_l}}{\big(\sum_{j=1} \sigma'(z_j)\big)^2} \\
	& = p_i P_q(\sigma(z_i)) \frac{1}{\tau} (p_l - \delta_{il}) - p_i \sum_j p_j P_1(\sigma(z_j)) \frac{1}{\tau} (p_l - \delta_{jl}) \\
	& = p_i p_l (v_i^{(1)} + v_l^{(1)} - \sum_{j} p_j v_j^{(1)}) - \delta_{il} p_i v_i^{(1)}.
 \end{split}
 \label{p_deriv_rule}
\end{equation} Before computing the derivatives of $v_i^{(q)}$ we need a formula for $P_q'$. We differentiate $\sigma^{(q+1)}(x)/\sigma'(x) = P_q(\sigma(x))$ and obtain, \begin{equation*}
\begin{split}
	(P_q(\sigma(x)))' & = P_q'(\sigma(x)) \sigma'(x) \\
	& = \frac{\sigma^{(q+2)}(x) \sigma'(x) - \sigma^{(q+1)}(x) \sigma'(x) P_1(\sigma(x))}{(\sigma'(x))^2} \\
	& = \frac{\sigma^{(q+2)}(x)}{\sigma'(x)} - \frac{\sigma^{(q+1)}(x)}{\sigma'(x)} P_1(\sigma(x)) \\
	& = P_{q+1}(\sigma(x)) - P_q(\sigma(x)) P_1(\sigma(x)).
\end{split}
\end{equation*}
Now we can compute $\frac{\partial}{\partial x_l} v_i^{(q)}$, \begin{equation}
	\begin{split}
	\frac{\partial}{\partial x_l} v_i^{(q)} &= \frac{\partial}{\partial x_l} \frac{P_q(\sigma(z_i))}{\tau^q} \\	
	& = \frac{1}{\tau^q} P_q'(\sigma(z_i)) \sigma'(z_i) \frac{\partial z_i}{\partial x_l} \\ 
	& = \frac{1}{\tau^q} (P_{q+1}(\sigma(z_i)) - P_q(\sigma(z_i)) P_1(\sigma(z_i))) \frac{1}{\tau} (p_l - \delta_{il}) \\
	& = (p_l - \delta_{il}) (v_i^{(q+1)} - v_i^{(q)} v_i^{(1)}).
	\end{split}
	\label{v_deriv_rule}
\end{equation} 

To finish off the proof, we show that for all $l \geq 1$, 
\begin{equation}
	\sum_{j_1, \dots, j_l} \frac{\partial^l}{\partial x_{j_1} \cdots \partial x_{j_l}} t^*_{\tau, k}(\bar{x}), 
	\label{derivative_tensor_sum}
\end{equation}
 can be expressed as a finite linear combination of terms of the form \begin{equation}
\sum_{(i_1, \dots, i_s) \in [[1, n]]^s}  p_{i_1} \cdots p_{i_s} v_{y_1}^{(q_1)} \cdots v_{y_m}^{(q_m)}, \quad \mathrm{with } \sum_{\mu=1}^m q_{\mu} = l - 1, 
\label{linear_comb_form}
\end{equation} and the formula is independent of $k$, and the $y_1, \dots, y_m$ each represent one of the indices $i_1 , \dots, i_s$. From this combined with \eqref{first_derivative_sum} and \eqref{v_bound}, \eqref{t_star_derivative_l1_bound} follows immediately.  

We show that \eqref{derivative_tensor_sum} can be expressed as a finite linear combination of \eqref{linear_comb_form} by induction. For $l = 1$ we have $\sum_{j_1} \frac{\partial}{\partial x_{j_1}} t^*_{\tau, k}(\bar{x}) = \sum_{j_1} p_{j_1}$. If we assume the claim is true for any $l \geq 1$ we show that it is also true for $l+1$: Apply $\sum_{j_{l+1}} \frac{\partial}{\partial x_{j_{l+1}}}$ to \eqref{linear_comb_form}. Then we see from the derivative rules \eqref{p_deriv_rule} and \eqref{v_deriv_rule} that if $\frac{\partial}{\partial x_{j_{l+1}}}$ acts on a $p_i$, we get four new terms, all of which have one extra $v^{(1)}$. Similarly, if $\frac{\partial}{\partial x_{j_{l+1}}}$ acts on a $v_y^{(q)}$, we see from \eqref{v_deriv_rule} that we again get four new terms, of the form \eqref{linear_comb_form}, two of which contain $v_y^{(q+1)}$ and the other two $v_y^{(q)}v_y^{(1)}$.  
\end{proof}

Equipped with $t^*_{\tau, k}$ and Theorem \ref{main_gfct} we can prove Theorem \ref{main_result} 

\begin{proof}[Proof of Theorem \ref{main_result}]
 Let $\epsilon_1 > 0$ be the constant given by Theorem \ref{main_gfct}, and set $$\delta := \min(\epsilon_1, 1/18).$$ Our goal is now to show \eqref{main_result_eq} for $\epsilon < \delta$.
 
 For $l \in [[1, K]]$ choose, 
 $$\mathcal{L}^l_{\tau} = t^*_{\tau, k_l},$$
 where $t^*_{\tau, k}$ is from \eqref{t_star_def}. By Lemma \ref{t_star_lemma}, these $\mathcal{L}^{\cdot}_{\tau}$ satisfy the assumptions in \eqref{func_assumptions}, and since $\delta \leq \epsilon_1$, $F$ satisfies \eqref{deriv_F_bound_assump}, and have that, 
 \begin{equation}
	\abs{(\EX - \EX^{\GOE})[(F \circ \widetilde{\func}_{\tau}) ((\mathcal{X}_{ij}^{(\alpha)})_{(\alpha, i, j) \in J})]}	\leq N^{-1/9}.
	\label{gfc_repeat}
 \end{equation} 
Denote the $k$-th largest element of $\{\mathcal{X}_{ij}^{\alpha}, (\alpha, i, j) \in J\}$ as $(\mathcal{X}_{ij}^{(\alpha)})_{(k)}$. By a standard ordering inequality and Lemma \ref{green_function_estimate_lemma}, we have uniformly in $k$, that with probability $\geq 1 - N^{-\epsilon}$,
\begin{equation}
	\abs{(\mathcal{X}_{ij}^{(\alpha)})_{(k)} - (N \inner{u_{\alpha}, q_i} \inner{u_{\alpha}, q_j})_{(k)}} \leq  \max_{(\alpha, i, j) \in J} \left| N \inner{u_\alpha, q_i} \inner{u_\alpha, q_j} - \mathcal{X}_{ij}^{(\alpha)} \right| < C N^{-\epsilon}.
	\label{order_ineq}
\end{equation}
By \eqref{t_star_good_approx_eq} we have uniformly in $l \in [[1, K]]$ that, 
\begin{equation}
	\abs{\mathcal{L}_{\tau}^{l}((\mathcal{X}_{ij}^{\alpha})_{(\alpha, i, j) \in J}) - (\mathcal{X}_{ij}^{(\alpha)})_{(k_l)}} \leq \tau \log(2 (\#_{J})).  
	\label{t_star_good_approx}
\end{equation} 
Combining \eqref{order_ineq} and \eqref{t_star_good_approx} yields that with probability $\geq 1 - N^{-\epsilon}$ and uniformly in $l \in [[1, K]]$, 
\begin{equation*}
	\abs{\mathcal{L}_{\tau}^{l}((\mathcal{X}_{ij}^{\alpha})_{(\alpha, i, j) \in J}) - (N \inner{u_{\alpha}, q_i} \inner{u_{\alpha}, q_j})_{(k_l)}} \leq \tau \log(2 (\#_{J})) + C N^{-\epsilon}.  
	\label{combined_ineq}
\end{equation*} 
Next we use the mean value theorem bound $\abs{F(x) - F(y)} \leq \norm{\nabla F}_{1} \norm{x - y}_{\infty}$, with $x = (\mathcal{L}_{\tau}^{l}((\mathcal{X}_{ij}^{\alpha})_{(\alpha, i, j) \in J}))_{l=1}^K$ and $y = ((N \inner{u_{\alpha}, q_i} \inner{u_{\alpha}, q_j})_{(k_l)})_{l=1}^K$ and apply $\EX$. Using the boundedness of $F$ and the bound on $\norm{\nabla F}_{1}$ gives that for sufficiently large $N$ we have (with a modified $C$),
\begin{equation}
	\abs{\EX[F((\mathcal{L}_{\tau}^{l}((\mathcal{X}_{ij}^{\alpha})_{(\alpha, i, j) \in J}))_{l=1}^K)] - \EX[F(((N \inner{u_{\alpha}, q_i} \inner{u_{\alpha}, q_j})_{(k_l)})_{l=1}^K)]} \leq C N^{\epsilon/4} (\tau \log(2 (\#_{J})) + C N^{-\epsilon}).
	\label{scnd_to_last}
\end{equation} 
Note that \eqref{scnd_to_last} holds also with $\EX^{\GOE}$ instead of $\EX$. Hence, combining \eqref{scnd_to_last} and \eqref{gfc_repeat} yields, 
\begin{equation}
	\begin{multlined}
	\abs{\EX[F(((N \inner{u_{\alpha}, q_i} \inner{u_{\alpha}, q_j})_{(k_l)})_{l=1}^K)] - \EX^{\GOE}[F(((N \inner{u_{\alpha}, q_i} \inner{u_{\alpha}, q_j})_{(k_l)})_{l=1}^K)]}  \\ \leq 2 C N^{\epsilon/4}( \tau \log(2 (\#_{J})) + C N^{-\epsilon}) + N^{-1/9}.   
	\end{multlined}
	\label{last_step}
\end{equation} 
Finally, recall $\tau = N^{-\epsilon}$, and use $\#_J \leq N^{3}$ and $\delta \leq 1/18$ to finish the proof. 
\end{proof}

	\section{Interpolating from generalized Wigner to Gaussian with variance profile} 
	\label{first_comparison_section}

	In this section we prove Lemma \ref{first_comprison_step_lemma}. Recall the interpolation \eqref{interpolation_step_1}. In the remainder of this section we shall denote $H^{(1)}(t)$ as $H(t)$. Recall $\widetilde{F} = F \circ \widetilde{\func}_\tau$, and $\tau = N^{-\epsilon}$. Our goal is to show \eqref{first_comparison_step}, which is equivalent to showing, \begin{equation*}
		\abs{\EX[\widetilde{F}(X(\infty))] - \EX[\widetilde{F}(X(0))]} \leq N^{-1/8}. 
	\end{equation*}
	Our strategy to achieve this is to bound $\dt \EX[\widetilde{F}(X(t))]$ and integrate from 0 to $\infty$. Next we continue the computations started in \eqref{basic_time_derivative_computation} by cumulant expanding using Lemma \ref{cumulant_expansion_lemma}. Because we constructed $H(t)$ to have constant second moment, we obtain cancellation of the variance terms when cumulant expanding $\EX[\dot{h}_{ab}(t) \frac{\partial \widetilde{F}(X(t))}{\partial h_{ab}(t)}]$. Denoting the normalized $p$-th cumulant of $h_{ab}(t)$ as $\kappa_{p}(a,b)$ we get that \begin{equation}
		\sum_{a,b} \frac{1 + \delta_{ab}}{2} \EX\left[\dot{h}_{ab}(t) \frac{\partial \widetilde{F}(X(t))}{\partial h_{ab}(t)}\right] =  \sum_{p+1 = 3}^{P} \sum_{a,b} \frac{1 + \delta_{ab}}{2} \frac{\e^{-t} (1 - \e^{-t})^{\frac{p-1}{2}}}{2} 	\frac{\kappa_{p+1}(a, b)}{N^{\frac{p+1}{2}}} \EX \left[\frac{\partial^{p+1} \widetilde{F}(X(t))}{(\partial h_{ab}(t))^{p+1}}\right] + O(N^{-D}),
		\label{cumulant_expansion1}
	\end{equation} 
	where $P$ is a finite but sufficiently large cut-off to make the bound on the remainder term $R_{l+1}$ in \eqref{cumulant_expansion_error_bound} $\leq N^{-D}$. Showing that a $P$ exists for any $D > 0$ is a routine calculation so we omit the proof. See \cite[Appendix A.2]{bucht_schnelli_xu_2025} for how this can be done. In the remainder of the section we shall without explicitly referring to it, use the fact that we can make the error term in cumulant expansions $O(N^{-D})$ for any $D > 0$ by expanding to a sufficiently large order.   

	Before introducing the general form of the terms in the expansion in \eqref{cumulant_expansion1} we write out what we get from taking the first derivative, \ie 
	\begin{equation}
		\begin{split}
		& \frac{\partial \widetilde{F}(X(t))}{(\partial h_{ab}(t))}  = \frac{\partial F (\widetilde{\func}_\tau (X(t)))}{\partial h_{ab}(t)}  \\
		& = \sum_{\beta \in [[1, K]]} (\partial_{\beta} F) (\widetilde{\func}_\tau (X(t))) \frac{\partial (\func_\tau^{\beta}(X(t)))}{\partial h_{ab}(t)} \\
		&  = \sum_{\beta \in [[1, K]]} (\partial_{\beta} F) (\widetilde{\func}_\tau (X(t))) \sum_{\mathfrak{i} \in J} \partial_{\mathfrak{i}} \func^{\beta}_\tau (X(t)) \frac{\partial \mathcal{X}_{\mathfrak{i}}(t)}{\partial h_{ab}(t)} \\
		& =  \sum_{\beta \in [[1, K]]} (\partial_{\beta} F) (\widetilde{\func}_\tau (X(t))) \sum_{(i, j, \alpha) \in J} \partial_{(i, j, \alpha)} \func^{\beta}_\tau (X(t)) \\
		& \quad \quad \times \left(- \frac{2}{1 + \delta_{ab}} \frac{N}{\pi} \int_I (\im G_{q_i a} G_{b q_j} + G_{q_j a} G_{b q_i}) \varrho_{\alpha}(Y(t)) \di E - \frac{2}{1 + \delta_{ab}} \frac{N}{\pi} \int_I \im G_{ q_{i} q_{j}} \varrho_{\alpha}'(Y(t))  (\Dim G_{ab})\di E\right).
		\end{split}
		\label{one_derivative_example}
	\end{equation}	

	By iteratively applying the derivative rules all higher derivatives in \eqref{cumulant_expansion1} can be computed. The result of computing the higher derivatives in \eqref{cumulant_expansion1} is a finite sum of terms of the form 
	
	\begin{equation} \hspace*{-0cm}
		\begin{split}
			& \frac{g(t)}{N^{\frac{p+1}{2}}} \sum_{a,b} \frac{\kappa_{p+1}(a,b)}{(1 + \delta_{ab})^{p}} \sum_{\beta \in [[1, K]]^{\#_{\beta}}}  \EX \Bigg[ (\partial_{\beta} F)(\widetilde{\func}_\tau (X(t))) \prod_{k=1}^{\#_{\beta}} \Bigg( \sum_{\mathcal{J} \in J^{r_k}} (\partial_{\mathcal{J}} \func_\tau^{\beta_k})(X(t)) \\
			& \times \prod_{\mathfrak{j} \in \mathcal{J}} \Big( \frac{N}{\pi} \int_I \Big(\im \prod_{i=1}^{n_1(\mathfrak{j})} G_{u_i v_i}\Big) \varrho_{\alpha(\mathfrak{j})}^{(n_2(\mathfrak{j}))}(Y(t)) \prod_{l=1}^{n_2(\mathfrak{j})} \Dim \Big(\prod_{j=1}^{\hat{n}_l(\mathfrak{j})}  G_{\hat{u}_j \hat{v}_j} \Big) \di E\Big)\Bigg)\Bigg].
		\end{split}
		\label{general_term_expansion}
	\end{equation}
	The $p$ is the same as in \eqref{cumulant_expansion1}. Further, $\beta$ is a multi-index of length $\#_{\beta}$ with entries $\beta_1, \dots, \beta_{\#_{\beta}} \in [[1, \dots, K]]$, corresponding to the partial derivatives of $F$. The $r_k, 1 \leq k \leq \#_\beta$ are positive integers counting the number of times a derivative has acted on $\func_{\tau}^{\beta_k}$. To get the term \eqref{general_term_expansion}, the derivative $\partial / \partial h_{ab}(t)$ acts exactly $\#_{\beta}$ times on $F$. Furthermore, $\mathcal{J}$ is a multi-index of elements in $J^{r_k}$, representing the partial derivatives of $\func_\tau^{\beta_k}$. The length $\#_{\mathcal{J}}$ of $\mathcal{J}$ increases by 1 if a $\partial/\partial h_{ab}(t)$ acts on the $\func$-factor. 
	
	The $\mathfrak{j} \in \mathcal{J}$ are on the form $\mathfrak{j} = (\alpha, i, j) \in J$, we use $\alpha(\mathfrak{j})$, $i(\mathfrak{j})$, $j(\mathfrak{j})$ to denote the components of $\mathfrak{j}$. The indices $u_i, v_i$ inside the factor indexed by $\mathfrak{j}$ are either the vectors $q_{i(\mathfrak{j})}$ or $q_{j(\mathfrak{j})}$, or the indices $a$ or $b$, with $q_{i(\mathfrak{j})}$ and $q_{j(\mathfrak{j})}$ appearing exactly once and $a$ and $b$ appearing equally many times. The indices $ \hat{u}_j, \hat{v}_j$ are $a$ or $b$, with $a$ and $b$ appearing equally many times. In \eqref{general_term_expansion}, $g(t)$ denotes a time-dependent factor of the form $g(t) = C \e^{-t} (1 - \e^{-t})^{(p-1)/2}$. 

	Next we discuss what happens when the derivative $\partial / \partial h_{ab}(t)$ acts on different factors of \eqref{general_term_expansion} (inside the expectation). Introduce the notation $n(k,\mathfrak{j})$ for the total number of $G$-factors inside the inner $\int_I$. In particular, we have \begin{equation*}
		n(k, \mathfrak{j}) = n_1(\mathfrak{j}) + \sum_{l=1}^{n_2(\mathfrak{j})} \hat{n}_l(\mathfrak{j}). 
	\end{equation*}
	
	We have the following different factors that the derivate can hit.  \begin{itemize}
		\item $\partial/\partial h_{ab}(t)$ acts on $(\partial_\beta \widetilde{F})(X(t))$: We will obtain three new terms corresponding to the three terms on the last row of \eqref{one_derivative_example}. The new terms will have $\#_{\beta} \to \#_{\beta} + 1$. The fresh factor corresponding to $k = \#_{\beta} + 1$ has $r_k = 1$ and $n(k, \mathfrak{j}) = 2$. 

		\item $\partial/\partial h_{ab}(t)$ acts on $\partial_{\mathcal{J}} \func^{\beta_k}_\tau (X(t))$: We get $r_k \to r_k + 1$ and hence a fresh factor in the product $\prod_{\mathfrak{j} \in \mathcal{J}}$. Inside the fresh factor we get exactly the three terms described by the last row in \eqref{one_derivative_example}. Both of these terms have $n(k, \hat{\mathfrak{j}}) = 2$. In total the term we started with gets replaced by $3$ new terms. 
		\item $\partial/\partial h_{ab}(t)$ acts inside an integral $\int_I (\dots) \di E$: It can act on three different factors. Either $$\im \prod_{i=1}^{n_1(\mathfrak{j})} G_{u_i v_i}, \text{ or } \varrho_{\alpha(\mathfrak{j})}^{(n_2(\mathfrak{j}))}(Y(t)),  \text{ or } \prod_{l=1}^{n_2(\mathfrak{j})} \Dim (\prod_{j=1}^{\hat{n}_l(\mathfrak{j})}  G_{\hat{u}_j \hat{v}_j} ).$$  
		In the first case the derivative will act on one $G_{u_i v_i}$ and $n_1(\mathfrak{j}) \to n_1(\mathfrak{j}) + 1$. In the second case $n_2(\mathfrak{j}) \to n_2(\mathfrak{j}) + 1$ and a factor $(\Dim G_{ab})$ is added. Finally, in the third case $\partial/\partial h_{ab}(t)$ acts on one of the $G_{\hat{u}_j \hat{v}_j}$ and the $\hat{n}_l(\mathfrak{j}) \to \hat{n}_l(\mathfrak{j}) + 1$. All in all we obtain at most two fresh terms, and $n(k, \mathfrak{j})$ is increased by exactly 1.  
	\end{itemize}

	Note that in all of the above cases we always get a factor $1/(1 + \delta_{ab})$ for each derivative. Next we want to bound the total amount of terms on the form \eqref{general_term_expansion} we can at most have for a fixed $p$. We can bound this by counting the number of different factors the derivative can hit, and how many new terms get generated in each of the different cases. There is always one $(\partial_\beta F)(\widetilde{\func}_\tau (X(t)))$-factor. If the derivative acts on this we obtain $3$ new terms. Further there are $\#_{\beta}$ factors of $\sum_{\mathcal{J}_k} \partial_{\mathcal{J}_k} \func_\tau^{\beta_k} (X(t))$. Each of these generate $3$ new terms. So in total we have $\#_{\beta} \times 3$ new terms in this case. Finally, there are at most $\sum_{k=1}^{\#_{\beta}} \sum_{\mathfrak{j} \in \mathcal{J}_k} n(k, \mathfrak{j})$ factors the derivative can hit inside the integrals $\int_I (\dots) \di E$. Finally, if the derivative hits one of these factors we get at most $2$ new terms. Hence, the total number of terms of the form \eqref{general_term_expansion} in the $(p+1)$-th term of \eqref{cumulant_expansion1} is bounded by a constant $C_p$.

	Our next goal is to bound the terms in the expansion of \eqref{cumulant_expansion1} of the form \eqref{general_term_expansion} individually. To accomplish this we will use naive local law bounds along with unmatched expansion techniques similar to \cite{Schnelli_Xu_2022,schnelli_xu_generalized_22,bucht_schnelli_xu_2025}. To carry out the expansion we introduce a more general form of \eqref{general_term_expansion}. 
	\begin{definition}[General form of terms in expansion]
		The terms we consider are defined by the data $\#_{\mathcal{I}} \in \mathbb{Z}_{\geq 1}$, a positive integer $\#_{\beta}$, a time factor $g : \mathbb{R}_{\geq 0} \to \mathbb{R}$, a bounded sequence, $\kappa = \kappa_N : [[1, N]]^{\#_{\mathcal{I}}} \to \mathbb{R}$, a constant $d_N \in \mathbb{R}$, and a tuple of positive integers $(r_k)_{k=1}^{\#_{\beta}}$. We define $\mathcal{I}$ as a multi-index of free summation indices $i_1, \dots, i_{\#_{\mathcal{I}}}$ that are iterated from $1$ to $N$. The above data defines a sequence of functions $\R^+ \to \R$ by, 
		\begin{equation}
			\begin{split}
			& t \mapsto g(t) \frac{N^{d_N}}{N^{\#_{\mathcal{I}}}} \sum_{\mathcal{I}} \kappa(\mathcal{I}) \sum_{\beta \in [[1, K]]^{\#_{\beta}}}\EX\Bigg[(\partial_\beta F) (\widetilde{\func}_\tau (X(t))) \prod_{k=1}^{\#_{\beta}} \Bigg(\sum_{\mathcal{J} \in J^{r_k}} \partial_{\mathcal{J}} \func_{\tau}^{\beta_k} (X(t)) \prod_{s=1}^{r_k} \Big( \inFac(k, s, \mathcal{I}, \mathcal{J})\Big)\Bigg)\Bigg],
			\end{split}
			\label{general_term}
		\end{equation} where $\inFac(k, s, \mathcal{I}, \mathcal{J})$ denotes an \textit{inner factor}. Denote $\mathcal{J}(s) = (\alpha(\mathcal{J}(s)), i(\mathcal{J}(s)), j(\mathcal{J}(s)))$, let $q_i$ and $q_j$ be the basis vectors $q_{i(\mathcal{J}(s))}$ and $q_{j(\mathcal{J}(s))}$ respectively and $\alpha = \alpha(\mathcal{J}(s))$. Each inner factor is defined by the following ($k$ and $s$ dependent) data: 
		\begin{itemize}
			\item Integers $n_1 \in \mathbb{Z}_{\geq 1}$, $n_2 \in \mathbb{Z}_{\geq 0}$, and the tuple $(\hat{n}_l)_{l=1}^{n_2} \in \mathbb{Z}_{\geq 1}^{n_2}$. 
			\item $\mathcal{I} \cup \{q_i, q_j\}$-valued indices $u_i, v_i$, $1 \leq i \leq n_1$. We assume that $q_{i}$ and $q_{j}$ occur exactly once in $(u_i, v_i)_{i=1}^{n_1}$. 
			\item For $1 \leq l \leq n_2$: $\mathcal{I}$-valued indices $\hat{u}_{j}, \hat{v}_{j}$, $1 \leq j \leq \hat{n}_l$.
		\end{itemize}
		The above data uniquely defines the inner factor, \begin{equation}
			\inFac(k, s, \mathcal{I}, \mathcal{J}) := \frac{N}{\pi} \int_{I} \im \prod_{i=1}^{n_1} G_{u_i v_i} \varrho^{(n_2)}_{\alpha} \prod_{l=1}^{n_2} \Big(\Dim \Big(\prod_{j=1}^{\hat{n}_l} G_{\hat{u}_j \hat{v}_j}\Big) \Big)\di E.
			\label{inner_fac_def}
		\end{equation} A term of the form \eqref{general_term} shall usually be called $T$, and we shall write \eg $r_k(T)$, $\mathcal{I}(T)$, $n_1(k, s)(T)$, $g(t)(T)$, etc., to denote the defining data of $T$. 
	\end{definition}

 	Next we define the \textit{degree} of a term on the form \eqref{general_term}, which is similar to the notion used in \cite{Schnelli_Xu_2022,schnelli_xu_generalized_22} but slightly adjusted to our current setting. \begin{definition} 
		Consider an inner factor $\inFac(k, s, \mathcal{I}, \mathcal{J})$ as in \eqref{general_term}. Let $d(k, s)$ denote the number of off-diagonal factors in $\im \prod_{i=1}^{n_1(k, s)} G_{u_i v_i}$, not counting the $G$-factors containing a $q$-index. Further, let $\hat{d}(k,s, l)$ be the number of off-diagonal factors in $\prod_{j=1}^{\hat{n}_l(k,s)}  G_{\hat{u}_j \hat{v}_j} $. 
		We define the degrees $d$ and $\hat{d}$ of the whole term \eqref{general_term} as \begin{align*}
			d & := \sum_{k=1}^{\#_{\beta}} \sum_{s=1}^{r_k} d(k, s), \\
			\hat{d} & :=  \sum_{k=1}^{\#_{\beta}} \sum_{s=1}^{r_k} \sum_{l=1}^{n_2(k, s)} \hat{d}(k, s, l). 
		\end{align*}
		Let $d_q(j)$ denote the total number of times one of $u_i, v_i$ is a $q$-index and the other $i_j \in \mathcal{I}$. Further, let $r_{\sum} := \sum_{k=1}^{\#_{\beta}} r_k$, and let $\mathfrak{r}_1$ be the number of inner factors with $d = 0, n_1 = 1$, and let $\mathfrak{r}_2$ be the number of inner factors with $d = 0, n_1 \geq 2$ and let $\mathfrak{r}_3$ be the number of inner factors with $d > 0, n_1 \geq 2$. Finally, introduce $$\mathfrak{c} := \sum_{k=1}^{\#_{\beta}} \sum_{s=1}^{r_k} \sum_{l=1}^{n_2(k, s)} \max\{1, \hat{d}(k, s, l)\},$$ and let $$n_2^{(\mathrm{tot})} := \sum_{k=1}^{\#_{\beta}} \sum_{s=1}^{r_k} n_2(k, s).$$
		\label{degree_def}
	\end{definition}
	
	We introduce more notation to easier reason about terms of the form \eqref{general_term_expansion}. First, abbreviate \begin{equation*}
		\mathfrak{w}_{\mathcal{J}} = \mathfrak{w}_{\mathcal{J}}^{k} := \partial_{\mathcal{J}} \func_\tau^{\beta_k} (X(t)). 
	\end{equation*}

	Next we bound $\abs{\inFac(k, s, \mathcal{I}, \mathcal{J})}$ in high probability by using the $\im$ in front of $\prod_{i=1}^{n_1(k, s)}$. Introduce a new small constant $\xi_6 > 0$. We see that we have \begin{equation}
		\abs{\inFac(k, s, \mathcal{I}, \mathcal{J})} \leq C N \cdot N^{-2/3 + \epsilon} \Psi  \leq \frac{N^{\xi_6 \epsilon}}{N \tilde{\eta}} N^{1/3 + 2 \epsilon} =: \mathcal{M},
		\label{inFac_high_prob_bound}
	\end{equation} where the last inequality holds with probability $\geq 1 - N^{-D}$ for sufficiently large $N$. Note that $\mathcal{M}$ is deterministic, and its size is $N^{C \epsilon}$. 
 
Next we provide a lemma that bounds the inner factors in terms of the random control factor $\Psi$. 
\begin{lemma} 
	Consider an inner factor $\inFac(k, s, \mathcal{I}, \mathcal{J})$ on the form \eqref{inner_fac_def}. Recall the notation $q_i$ and $q_j$ for $q_{i(\mathcal{J}(s))}$ and $q_{j(\mathcal{J}(s))}$. If $n_1(k,s) \geq 2$ we shall wlog assume that $u_1 = q_i$ and $v_2 = q_j$. Assume that all indices in $\mathcal{I}$ take on pairwise distinct values. Then we have for sufficiently small $\epsilon > 0$ and $N \geq N_0(\epsilon)$ that with probability $\geq 1 - N^{-D}$ for any large $D$, 
	\begin{equation}
		\begin{split}
		& \abs{\inFac(k, s, \mathcal{I}, \mathcal{J})} \leq N^{1/3 + 2 \epsilon} B_1 B_2, \\
		& \text{where } B_1 := \begin{cases}
			\Psi & \text{if } d(k, s) = 0, n_1(k, s) = 1, \\
			\Psi(\Psi + \abs{\inner{q_i, v_1}} + \abs{\inner{u_2, q_j}}) & \text{if } d(k, s) = 0, n_1(k, s) \geq 2, \\
			\Psi^{d(k, s)} (\Psi + \abs{\inner{q_i, v_1}}) (\Psi + \abs{\inner{u_2, q_j}}) & \text{if } d(k, s) > 0, n_1(k, s) \geq 2, \\
		\end{cases} \\
		& \text{and } B_2 := \Psi^{\sum_{l=1}^{n_2(k, s)} \max\{1, \hat{d}(k, s, l)\}}.
		\end{split}
		\label{distinct_indices_inner_factor_bound}
	\end{equation}
	 When the indices in $\mathcal{I}$ are allowed to take on the same values, we instead have for any $\epsilon > 0$ and $N \geq N_0 (\epsilon)$ that with probability $\geq 1 - N^{-D}$ for any large $D$, 
	\begin{equation}
		\begin{split}
		& \abs{\inFac(k, s, \mathcal{I}, \mathcal{J})} \leq N^{1/3 + 2 \epsilon} B_1 B_2, \\
		& \text{where } B_1 := \begin{cases}
			\Psi & \text{if } n_1(k, s) = 1, \\
			\Psi(\Psi + \abs{\inner{q_i, v_1}} + \abs{\inner{u_2, q_j}}) & \text{if } n_1(k, s) \geq 2, \\
		\end{cases} \\
		& \text{and } B_2 := \Psi^{n_2(k, s)}.
		\end{split}
		\label{incidental_indices_inner_factor_bound}
	\end{equation}
\end{lemma}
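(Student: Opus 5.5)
The bound is a pointwise estimate on the high-probability event of Lemma \ref{uniform_local_law}. The integrand of $\inFac(k,s,\mathcal{I},\mathcal{J})$ in \eqref{inner_fac_def} will be estimated pointwise in $E\in I$ and then integrated. The interval $I$ has length $2N^{-2/3+\epsilon}$, so the prefactor $\frac{N}{\pi}|I|$ is at most $N^{1/3+2\epsilon}$ for large $N$. The factor $\varrho_\alpha^{(n_2)}$ is bounded by a constant. It therefore suffices to show that, with probability $\geq 1-N^{-D}$ and uniformly in $E\in I$,
\begin{equation*}
\Big|\im \prod_{i=1}^{n_1} G_{u_iv_i}\Big|\le C B_1, \qquad \prod_{l=1}^{n_2}\Big|\Dim \prod_{j=1}^{\hat n_l}G_{\hat u_j\hat v_j}\Big|\le C B_2 .
\end{equation*}
The constants will be absorbed by slightly enlarging the exponent $2\epsilon$, or by using $\Psi\ge N^{-1/3}$ together with the $N^{\epsilon}$ room. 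We work on the event where \eqref{abs_bdd_by_one} holds and $\Psi$ controls all entries as in \eqref{Psi_def}. Note that $I\subset[E_L,-2+N^{-2/3+\epsilon}]$, so the supremum in \eqref{Psi_def} covers every point of $I$.

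For the $\im$-product we use the identity $\im(z_1\cdots z_n)=\sum_{m}\im z_m\,\mathrm{Re}(\ldots)$; more precisely, we expand the imaginary part of a product via $\im(xy)=\im x\,\mathrm{Re}\, y+\mathrm{Re}\,x\,\im y$ iterated. This gives $|\im\prod G|\le C\sum_m|\im G_{u_mv_m}|\prod_{i\ne m}|G_{u_iv_i}|$. Each $|\im G_{u v}|\le \Psi$ at spectral parameter $E+\ii\eta$ by \eqref{Psi_def}, and every other factor is bounded by $1$ by \eqref{abs_bdd_by_one}. This alone gives $B_1=\Psi$, which already settles the case $n_1=1$.

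When $n_1\ge2$, assume distinct indices. Each off-diagonal factor $G_{ab}$ with $a\ne b$ in $\mathcal{I}$ is $\le\Psi$ in absolute value. Each factor containing a $q$-index, say $G_{q_iv_1}$, equals $\inner{q_i,v_1}\msc+O(\Psi)$, so it is bounded by $\Psi+|\inner{q_i,v_1}|$ since $|\msc|\le1$. Diagonal factors are bounded by $1$. Combining this with the extra $\Psi$ gained from the imaginary part gives the three cases. The bookkeeping point is that the $\im$ may land on a factor that already carries a $\Psi$. In that case we bound that factor by $\Psi$ (from $\im$) and use the smallness of the remaining factors. When $d=0$, there is only one $\Psi$ to spare in the product of the two $q$-factors, which explains the weaker form $\Psi(\Psi+|\inner{q_i,v_1}|+|\inner{u_2,q_j}|)$. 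Indeed, if $\im$ falls on $G_{q_iv_1}$, we keep $\Psi$ from it and use $|G_{u_2q_j}|\le \Psi+|\inner{u_2,q_j}|$, and symmetrically in the other case. When $d>0$, the $\im$ falling on an off-diagonal factor costs nothing, because that factor is $\le\Psi$ in any case and $|\im G|\le|G|$. So we get $\Psi^{d}$ times both $q$-factor bounds. If $\im$ falls on a $q$-factor, we instead use $\Psi\le\Psi+|\inner{\cdot}|$ for it.

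For the $\Dim$ factors, $\Dim f$ is a difference of two evaluations of $\im f$ at spectral parameter $E+\ii\tilde\eta$, with $E\in\{E_L,E^-\}$; both lie in the range of \eqref{Psi_def}. Each evaluation is handled exactly as above: $|\im\prod G|$ gains one $\Psi$ from $|2\im G_{uv}(E+\ii\tilde\eta)|\le\Psi$. Off-diagonal $G_{ab}(E+\ii\tilde\eta)$ with $a\ne b$ are $\le\Psi/2$, and the remaining factors are $\le1$. So each $\Dim$ factor is $\le C\Psi^{\max\{1,\hat d\}}$, with the same argument as for $d>0$ that the $\im$ does not double count. The product gives $B_2$.

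In the non-distinct case, indices may coincide, so a nominally off-diagonal entry may be diagonal. We then only keep the single $\Psi$ coming from each $\im$, which gives \eqref{incidental_indices_inner_factor_bound}. The $q$-factor bound $\Psi+|\inner{q,\cdot}|$ is unaffected because it never used distinctness.

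The main obstacle is only the careful case analysis of where the $\im$ lands relative to the $q$-factors when $d=0$; the rest is direct use of \eqref{Psi_def} and \eqref{abs_bdd_by_one}. Uniformity over $E$ and over indices follows because $\Psi$ is already defined as a supremum over these.
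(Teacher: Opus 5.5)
Your proof is correct and is essentially the paper's argument. Like the paper, you bound the integrand pointwise on the local-law event: you gain one $\Psi$ from the $\im$, bound the $q$-factors by $\Psi+\abs{\inner{\cdot,\cdot}}$, off-diagonal entries by $\Psi$ and everything else by $1$. You then multiply by $\frac{N}{\pi}\abs{I}\lesssim N^{1/3+\epsilon}$ and get the non-distinct case by not using off-diagonality. The only difference is bookkeeping: you distribute the $\im$ via $\abs{\im\prod_m z_m}\le\sum_m\abs{\im z_m}\prod_{i\ne m}\abs{z_i}$, whereas the paper first splits $G_{q_iv_1}$ and $G_{u_2q_j}$ into $\inner{\cdot,\cdot}\msc+\mathcal{E}$ and bounds the four resulting terms. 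In your version, the case where the $\im$ lands on a diagonal non-$q$ factor when $d=0$ is left implicit; it needs $\Psi\le 1$, which holds on the event for small $\epsilon$.
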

\begin{proof}

The strategy is to bound the absolute value of the integrand uniformly in $E$ and use the mean value bound. The length of the integration interval $I$ is $2 N^{-2/3 + \epsilon}$, so the mean value bound will be $2 N/\pi \cdot N^{-2/3 + \epsilon} = 2 N^{1/3 + \epsilon} / \pi$ times the uniform bound on the integrand. The uniform bound on the integrand will be given by $B_1 B_2$ up to constants. We add a factor $N^{\epsilon}$ to absorb constants. 

We begin by showing \eqref{distinct_indices_inner_factor_bound}. For $n_2 = 1$, note that $\abs{\im G_{q_i q_j}} \leq \Psi$. For the case $n_2 \geq 2$, expand $G_{q_i v_1}$ and $G_{u_2 q_j}$ as $G_{uv} = \inner{u,v} \msc + \mathcal{E}_{uv}$, with the random error defined as $\mathcal{E}_{uv} := G_{uv} - \inner{u, v} \msc$. By \eqref{Psi_def} we have $\abs{\mathcal{E}_{uv}} \leq \Psi$, for $u, v \in \mathbb{B}$. Writing it out we have, \begin{equation*}
	\im \prod_{i=1}^{n_1} G_{u_i v_i} = \im G_{q_i v_1} G_{u_2 q_j} \prod_{l=3 }^{n_1} G_{u_l v_l} = \im (\inner{q_i, v_1} \msc + \mathcal{E}_{q_i v_1}) (\inner{u_2, q_j} \msc + \mathcal{E}_{u_2 q_j}) \prod_{l=3 }^{n_1} G_{u_l v_l}.
\end{equation*} Expanding the first parenthesis we obtain four terms, 
\begin{align}
	\im \prod_{i=1}^{n_1} G_{u_i v_i} &= \im (\inner{q_i, v_1} \msc + \mathcal{E}_{q_i v_1}) (\inner{u_2, q_j} \msc + \mathcal{E}_{u_2 q_j}) \prod_{l=3 }^{n_1} G_{u_l v_l} \nonumber \\ 
	&= \im \inner{q_i, v_1} \inner{u_2, q_j} \msc^2 \prod_{l=3 }^{n_1} G_{u_l v_l} \label{t1} \\ 
	&\quad + \im \inner{q_i, v_1} \mathcal{E}_{u_2 q_j} \msc \prod_{l=3 }^{n_1} \label{t2} G_{u_l v_l} \\ 
	&\quad + \im \mathcal{E}_{q_i v_1} \inner{u_2, q_j} \msc \prod_{l=3 }^{n_1} \label{t3} G_{u_l v_l} \\ 
	&\quad + \im \mathcal{E}_{q_i v_1} \mathcal{E}_{u_2 q_j} \prod_{l=3 }^{n_1} \label{t4} G_{u_l v_l}.
\end{align}

When $d = 0$ we bound with probability $\geq 1 - N^{-D}$, $\abs{\eqref{t1}} \leq C \abs{\inner{q_i, v_1}} \abs{\inner{u_2, q_j}} \Psi$, with the $\Psi$ coming from the imaginary parts of $G_{u_l v_l}$ and $\msc$. For $\abs{\eqref{t2}}$, we just use that $\abs{\mathcal{E}_{u_2 q_j}} \leq \Psi$ and \eqref{abs_bdd_by_one} to see $\abs{\eqref{t2}} \leq C \abs{\inner{q_i, v_1}} \Psi$ with probability $\geq 1 - N^{-D}$. Similarly, we get $\abs{\eqref{t3}} \leq C \abs{\inner{u_2, q_j}} \Psi$ with probability $\geq 1 - N^{-D}$. Finally, we bound $\eqref{t4}$ using $\abs{\mathcal{E}_{u_2 q_j}}, \abs{\mathcal{E}_{q_i v_1}} \leq \Psi$ to see $\abs{\eqref{t4}} \leq \Psi^2$ with probability $\geq 1 - N^{-D}$. Note that since $\abs{\inner{q_i,v_1}}, \abs{\inner{u_2,q_j}} \leq 1$ the bound on $\abs{\eqref{t1}}$ can be absorbed into the bound for $\abs{\eqref{t2}}$. Hence, in the case $n_2 \geq 2, d(k, s) = 0$ we have with probability $\geq 1 - N^{-D}$,
\begin{equation*}
	\abs{\im \prod_{i=1}^{n_1} G_{u_i v_i}} \leq C \Psi (\Psi + \abs{\inner{q_i, v_1}} + \abs{\inner{u_2, q_j}}). 
\end{equation*} 

In the case $d > 0$, we just use $\abs{\msc} \leq 1$ and bound the off-diagonals in $\prod_{l=3}^{n_1} G_{u_l v_l}$ by $\Psi$, and the diagonal factors by $1$ in high probability. This gives $\abs{\im \prod_{i=1}^{n_1} G_{u_i v_i}} \leq C B_1$ for $d > 0, n_1 \geq 2$. So we have shown that with high probability, $\abs{\im \prod_{i=1}^{n_1} G_{u_i v_i}} \leq C B_1$ in all three cases in \eqref{distinct_indices_inner_factor_bound}. To show the bound, 
\begin{equation*}
	\abs{\prod_{l=1}^{n_2(k, s)} \Dim \prod_{j=1}^{\hat{n}_l(k, s)} G_{\hat{u}_j \hat{v}_j}} \leq C B_2, \text{ with probability } \geq 1 - N^{-D},
\end{equation*} note that each factor $\Dim \prod_{j=1}^{\hat{n}_l(k, s)} G_{\hat{u}_j \hat{v}_j}$ can be bounded by $C \Psi$ if $\hat{d}(k, s, l) = 0$ or $C \Psi^{\hat{d}(k, s, l)}$ if $\hat{d}(k, s, l)~>~0$. Hence, the proof of \eqref{distinct_indices_inner_factor_bound} is finished. 

Finally, we note that \eqref{incidental_indices_inner_factor_bound} follows by \eqref{distinct_indices_inner_factor_bound} in the case of $d(k, s) = 0$ and $\hat{d}(k, s, l) = 0$ for $1 \leq l \leq n_2(k, s)$.  \end{proof}

	Next we use the bound on the inner factors to bound entire terms. 

	 \begin{lemma}
		Let $T$ be a term of the form \eqref{general_term} and recall $d_q(j)$ from Definition \ref{degree_def}.  Assume wlog that the $d_q(j)$ are ordered as $d_q(1) \geq d_q(2) \geq \dots \geq d_q(\#_{\mathcal{I}})$, and recall the high probability bound $\mathcal{M}$ in \eqref{inFac_high_prob_bound}. There exist non-negative integers $p_j, j = 1,\ldots,\#_{\mathcal{I}}$, and $\hat{p}_j, j=1, \dots, \#_{\mathcal{I}}-1$ satisfying,
		\begin{equation}
			\begin{split}
			& \sum_{j=1}^{\#_{\mathcal{I}}} p_j \leq \mathfrak{r}_2 + 2 \mathfrak{r}_3, \mathrm{ and } \, p_j \leq d_q(j), \; j = 1,\ldots,\#_{\mathcal{I}}, \\ 
			& \sum_{j=1}^{\#_{\mathcal{I}}} \hat{p}_j \leq \mathfrak{r}_2 + \mathfrak{r}_3, \mathrm{ and } \, \hat{p}_1 \leq d_q(1) + d_q(2), \, \hat{p}_j \leq d_q(j+1), \; j=2,\ldots,\#_{\mathcal{I}} - 1, 
			\end{split}
			\label{p_conditions}
		\end{equation}
		such that for any (small) $\xi_7 > 0$ and sufficiently large $N$ we have 
		\begin{equation}
			\begin{split}
			& \abs{g(t) \frac{N^{d_N}}{N^{\#_{\mathcal{I}}}} \sum_{\mathcal{I}} \kappa(\mathcal{I}) \sum_{\beta \in [[1, K]]^{\#_{\beta}}} \EX\Bigg[(\partial_\beta F) (\widetilde{\func}_{\tau}(X(t))) \prod_{k=1}^{\#_{\beta}} \Bigg(\sum_{\mathcal{J} \in J^{r_k}} \mathfrak{w}_{\mathcal{J}}^{k} \prod_{s=1}^{r_k} \inFac(k, s, \mathcal{I}, \mathcal{J}) \Bigg)\Bigg]} \\
			& \leq g(t) N^{\xi_7} N^{d_N} \mathcal{M}^{r_{\sum}} \Bigg( \prod_{k=1}^{\#_{\beta}} \frac{N^{\Fasump \epsilon}}{\tau^{r_k - 1}} \Bigg) \Bigg(\Big(\frac{1}{N \tilde{\eta}}\Big)^{d + \mathfrak{r}_2 + \mathfrak{r}_3 + \mathfrak{c} - \sum_{j=1}^{\#_{\mathcal{I}}} \max\{p_j-3, 0\}}  + \frac{1}{N} \Big(\frac{1}{N \tilde{\eta}}\Big)^{\mathfrak{r}_2 + \mathfrak{r}_3 + n_2^{(\mathrm{tot})} - \sum_{j=1}^{\#_{\mathcal{I}}-1} \max\{\hat{p}_j - 3, 0\}} \Bigg).
			\end{split}
			\label{local_law_bound}
		\end{equation}
		\label{local_law_bound_lemma}
	\end{lemma}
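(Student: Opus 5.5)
The plan is to separate the sum over $\mathcal{I}$ into its fully distinct part $\disSum_{\mathcal{I}}$ and the parts where at least one coincidence among the indices $i_1,\dots,i_{\#_{\mathcal{I}}}$ occurs. On each part we first bound the $F$- and $\func$-factors by their $\ell^1$ norms, and only then use the inner-factor bounds. Inside the expectation, the sum over $\beta$ and the product over $k$ are controlled in the same way on both parts. By \eqref{deriv_F_bound_assump}, $\sum_\beta |\partial_\beta F|\le N^{\Fasump\#_\beta\epsilon}$. By \eqref{func_assumptions}, for each $k$ we have $\sum_{\mathcal{J}\in J^{r_k}}|\mathfrak{w}^k_{\mathcal{J}}|\le C_{r_k}\tau^{-(r_k-1)}$. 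This leaves a supremum over $\mathcal{J}$ of products of inner factors. Here we do not use a Ward identity, which the functional $\widetilde{\func}_\tau$ prevents. Instead, each of the $r_{\sum}$ inner factors is bounded either by $\mathcal{M}$ or by the finer bound in \eqref{distinct_indices_inner_factor_bound}. The finer bound is $\mathcal{M}$ times a product of factors of the form $\Psi$ or $\Psi+|\inner{q,e_{i_j}}|$. The $\mathcal{M}$ prefactor is harmless because $N^{1/3+2\epsilon}\Psi\le \mathcal{M}$ with high probability, and each additional $\Psi$ is $\prec (N\tilde\eta)^{-1}$ by Lemma \ref{uniform_local_law}. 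On the complement of the high-probability event we use the trivial bound $|G|\le \tilde\eta^{-1}$; this contributes $O(N^{-D})$.

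For the distinct part, apply \eqref{distinct_indices_inner_factor_bound} to every inner factor. This yields $\Psi^{d}$ from the off-diagonal entries and $\Psi^{\mathfrak{c}}$ from the $\Dim$-factors. Each inner factor with $n_1\ge2$ contributes at least one further $\Psi$, giving $\Psi^{\mathfrak{r}_2+\mathfrak{r}_3}$. In the cross terms, however, some factors $\Psi$ are replaced by overlaps $|\inner{q,e_{i_j}}|$, where $q$ ranges over the $q$-vectors appearing in the term. Expanding the product of the $(\Psi+|\inner{q_i,v_1}|)$'s, we record for each summation index $i_j$ the number $p_j$ of overlap factors carrying that index. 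By construction $p_j\le d_q(j)$, and $\sum_j p_j\le \mathfrak{r}_2+2\mathfrak{r}_3$, since type-2 factors contribute at most one overlap and type-3 factors at most two. Now take the normalized sum $N^{-1}\sum_{i_j}\prod|\inner{q,e_{i_j}}|$ over an index carrying $p_j$ overlaps. Since the $q$'s are unit vectors, Cauchy–Schwarz together with $\sum_i|\inner{q,e_i}|^2=1$ gives an $N^{-1}$-normalized sum of size $\le N^{-p_j/2}$ when $p_j\ge2$. Also $N^{-1/2}\le N^{-1/3}\lesssim (N\tilde\eta)^{-1}N^{C\epsilon}$. So each overlap gains the same as a $\Psi$, except for a loss of at most three factors per index, which is where the crude counting breaks down. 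This produces the exponent $d+\mathfrak{r}_2+\mathfrak{r}_3+\mathfrak{c}-\sum_j\max\{p_j-3,0\}$. The $\xi_7$ slack absorbs the $N^{\xi_6\epsilon}$ and $\prec$ losses. Since the $p_j$ enter only through this exponent, an overlap-free term is handled by taking $p_j=0$.

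For the non-distinct part, each coincidence pattern reduces $\#_{\mathcal{I}}$ by at least one and so gains $N^{-1}$, which is the $\frac1N$ prefactor. Off-diagonal entries may become diagonal, so here we only use \eqref{incidental_indices_inner_factor_bound}, which gives $\Psi^{\mathfrak{r}_2+\mathfrak{r}_3+n_2^{(\mathrm{tot})}}$. The overlap counting is then redone with $\#_{\mathcal{I}}-1$ effective indices. In the worst case the two indices with the largest $d_q$ merge, which explains $\hat p_1\le d_q(1)+d_q(2)$ and $\hat p_j\le d_q(j+1)$.

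The main obstacle is the overlap bookkeeping. We must show that the expansion of the product of the $(\Psi+|\inner{q,v}|)$ factors can always be organized so that the sum over each index $i_j$ of a product of $p_j$ overlaps is controlled as claimed, uniformly in the supremum over $\mathcal{J}$ and after the $\ell^1$-summation over $\mathcal{J}$. The delicate point is that different inner factors may pair the same $e_{i_j}$ with different $q$-vectors, and the bound has to hold uniformly in these $q$'s. I would handle this with Hölder's inequality over the $p_j$ factors, losing at most the three powers accounted for by $\max\{p_j-3,0\}$.
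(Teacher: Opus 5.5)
Your plan is the paper's proof. You split off the distinct part $\disSum_{\mathcal{I}}$ and pull out $\sum_\beta\abs{\partial_\beta F}\le N^{\Fasump\#_\beta\epsilon}$ and $\sum_{\mathcal{J}}\abs{\mathfrak{w}^k_{\mathcal{J}}}\le C\tau^{-(r_k-1)}$. You then insert \eqref{distinct_indices_inner_factor_bound}, expand the $(\Psi+\abs{\inner{q,\cdot}})$-factors and factorize the $\mathcal{I}$-average index by index. Finally you repeat this with \eqref{incidental_indices_inner_factor_bound} and one merged index for the coincident part. Comparing overlaps directly with $(N\tilde\eta)^{-1}$, instead of with $\Psi\ge N^{-1/3}$ as the paper does, is a harmless variant that avoids negative powers of $\Psi$. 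Your handling of the low-probability complement is more explicit than the paper's.

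One estimate is false as stated, however. It is not true that $\frac1N\sum_i\prod_{l=1}^{p}\abs{\inner{q_l,e_i}}\le N^{-p/2}$ for $p\ge2$. Since $Q$ is arbitrary (e.g. $Q=I$) and $\mathcal{J}\in J^{r_k}$ may repeat entries, all $q_l$ can be the same standard basis vector, and then the average equals $N^{-1}$ for every $p\ge2$. The correct bound, uniform in the $q_l$, is $N^{-1}$: apply Cauchy--Schwarz to two factors and $\abs{\inner{q_l,e_i}}\le1$ to the rest (your H\"older argument gives the same).

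This correction is the whole reason for the $\max\{p_j-3,0\}$ term. A single index yields at most $N^{-1}\le(N\tilde\eta)^{-3}$, i.e. at most three $\Psi$-equivalents, however many overlaps it carries. So the sentence about ``a loss of at most three factors per index'' should say the \emph{gain} per index is capped at three, and an index with $p_j\ge4$ loses $p_j-3$. With your stated $N^{-p_j/2}$ there would be no loss at all, which contradicts your own final exponent.

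Once this is fixed, the ``main obstacle'' you describe disappears, because uniformity in the $q$'s is automatic. The point that does need care is the order of operations. Interchange $\sum_{\mathcal{I}}$ with the $\beta$- and $\mathcal{J}$-sums, which is allowed since $\partial_\beta F$ and $\mathfrak{w}$ do not depend on $\mathcal{I}$. Then take the supremum over $\mathcal{J}$ of the $\mathcal{I}$-averaged product, not pointwise in $\mathcal{I}$. Pointwise, $\sup_{\mathcal{J}}\abs{\inner{q_{i(\mathcal{J})},e_{i_j}}}$ can be of order one for every $i_j$, and all overlap gains would be lost.
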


	\begin{proof}
	
		Denote the distinct summation indices part of $T$ by $\mathring{T}$, \ie \eqref{general_term} but $\sum_{\mathcal{I}}$ replaced by $\disSum_{\mathcal{I}}$. We will first show the bound for $\mathring{T}$. We return in the end to bounding $T - \mathring{T}$. 
		
		Now, use that $\kappa_{\mathcal{I}}$ is bounded, and that $(\partial_{\beta} F)(\widetilde{\func}_{\tau})$ does not depend on $\mathcal{I}$, so we can swap the summations over $\beta$ and $\mathcal{I}$, also move in the factor $1/N^{\#_{\mathcal{I}}}$. Finally, expand the product $\prod_{k=1}^{\#_{\beta}}$. We then get,
		\begin{equation*}
		|\mathring{T}| \leq C g(t) N^{d_N}  \EX \Bigg[\sum_{\beta \in [[1, K]]^{\#_{\beta}}} \abs{(\partial_{\beta} F)(\widetilde{\func}_{\tau})} \frac{1}{N^{\#_{\mathcal{I}}}} \disSum_{\mathcal{I}}  \sum_{\mathcal{J}_1, \dots, \mathcal{J}_{\#_{\beta}}} \abs{ \mathfrak{w}_{\mathcal{J}_1} \cdots \mathfrak{w}_{\mathcal{J}_{\#_{\beta}}}} \prod_{k=1}^{\#_{\beta}} \prod_{s=1}^{r_k} \abs{\inFac(k, s, \mathcal{I}, \mathcal{J}_k) }\Bigg] =: (\star).
		\end{equation*}	
		Using the assumption on the derivatives of $F$, \eqref{deriv_F_bound_assump}, and that the $\mathfrak{w}$ do not depend on $\mathcal{I}$, we obtain, \begin{equation}
			(\star) \leq C g(t) N^{d_N} N^{(\#_{\beta}) (\Fasump \epsilon)} \EX \Bigg[\max_{\beta \in [[1, K]]^{\#_{\beta}}}   \sum_{\mathcal{J}_1, \dots, \mathcal{J}_{\#_{\beta}}} \abs{ \mathfrak{w}_{\mathcal{J}_1} \cdots \mathfrak{w}_{\mathcal{J}_{\#_{\beta}}}} \frac{1}{N^{\#_{\mathcal{I}}}} \disSum_{\mathcal{I}} \prod_{k=1}^{\#_{\beta}} \prod_{s=1}^{r_k} \abs{\inFac(k, s, \mathcal{I}, \mathcal{J}_k) }\Bigg].
			\label{second_to_final_step}
	\end{equation}
	Next we focus on bounding the averages of product of inner factors, 
	\begin{equation*}
	\frac{1}{N^{\#_{\mathcal{I}}}} \disSum_{\mathcal{I}} \prod_{k=1}^{\#_{\beta}} \prod_{s=1}^{r_k} \abs{\inFac(k, s, \mathcal{I}, \mathcal{J}_k) }.	
	\end{equation*}
	Insert the bounds from \eqref{distinct_indices_inner_factor_bound}, and expand all products. We obtain at most $3^{r_{\sum}}$ terms when expanding the products. Each of the terms can be written as \begin{equation}
		(N^{1/3 + 2 \epsilon} \Psi)^{\mathfrak{r}_1 + \mathfrak{r}_2 + \mathfrak{r}_3} \Psi^{d - \mathfrak{r}_3} \Psi^{\mathfrak{c}} \Psi^{p_{\Psi}} \Big(\prod_{l}^{p_1} \abs{\inner{q_{1l}, i_1}} \Big) \cdots \Big(\prod_{l}^{p_{\#_{\mathcal{I}}}} \abs{\inner{q_{\#_{\mathcal{I}} l}, i_{\#_{\mathcal{I}}}}} \Big),
		\label{inner_factor_expansion_terms}
	\end{equation} where the $q_{1l}, q_{2l}, \ldots q_{\#_{\mathcal{I}} l}$ are the vectors that appear together with the indices $i_1, \ldots, i_{\#_{\mathcal{I}}}$ in the Green function factors, and $p_j$ is the number of times $i_j$ occurs in a scalar product, and $p_{\Psi}$ is the number of $\Psi$-factors we pick up from the factors in $B_1$ in \eqref{distinct_indices_inner_factor_bound} that contain a $\Psi$ plus one or two scalar products. Note that $p_j \leq d_{q}(j)$. Since in each factor from the second and third case of $B_1$ in \eqref{distinct_indices_inner_factor_bound} we must pick up either a $\Psi$ or a scalar product, we have \begin{equation}
		p_{\Psi} + p_1 + \dots + p_{\#_{\mathcal{I}}} = \mathfrak{r}_2 + 2 \mathfrak{r}_3. 	
		\label{p_distribution}
	\end{equation} The expression in \eqref{inner_factor_expansion_terms} is factored in the $i_j$ so for each term as in \eqref{inner_factor_expansion_terms} we have \begin{equation*}
		\begin{split}
		& \frac{1}{N^{\#_{\mathcal{I}}}} \disSum_{\mathcal{I}}  (N^{1/3 + 2 \epsilon} \Psi)^{\mathfrak{r}_1 + \mathfrak{r}_2 + \mathfrak{r}_3} \Psi^{d - \mathfrak{r}_3} \Psi^{\mathfrak{c}} \Psi^{p_{\Psi}} \Big(\prod_{l}^{p_1} \abs{\inner{q_{1l}, i_1}} \Big) \cdots \Big(\prod_{l}^{p_{\#_{\mathcal{I}}}} \abs{\inner{q_{\#_{\mathcal{I}} l}, i_{\#_{\mathcal{I}}}}} \Big)	\\
		& \quad \leq (N^{1/3 + 2 \epsilon} \Psi)^{\mathfrak{r}_1 + \mathfrak{r}_2 + \mathfrak{r}_3} \Psi^{d - \mathfrak{r}_3} \Psi^{\mathfrak{c}} \Psi^{p_{\Psi}} \Big(\frac{1}{N} \sum_{i_1} \prod_{l}^{p_1} \abs{\inner{q_{1l}, i_1}} \Big) \cdots \Big(\frac{1}{N} \sum_{i_{\#_{\mathcal{I}}}} \prod_{l}^{p_{\#_{\mathcal{I}}}} \abs{\inner{q_{\#_{\mathcal{I}}l}, i_{\#_{\mathcal{I}}}}} \Big).
		\end{split}
	\end{equation*} Note that to obtain the above factorization we bound $\sum_{\mathcal{I}}$ by $\disSum_{\mathcal{I}}$, which we can do since all the summands are non-negative. 

	Using Cauchy-Schwarz, $\abs{\inner{q_{\cdot \cdot}, i_j}} \leq 1$, and $\sum_{i_j} \abs{\inner{q_{\cdot \cdot}, i_j}}^2 = 1$ we have for $j = 1, \ldots, \#_{\mathcal{I}}$ that, \begin{equation}
		\frac{1}{N} \sum_{i_j} \prod_{l}^{p_j} \abs{\inner{q_{jl}, i_j}} \leq \begin{cases}
			1 & p_j = 0, \\
			\frac{1}{\sqrt{N}} & p_j = 1, \\
			\frac{1}{N} & p_j \geq 2.
		\end{cases}
		\label{inner_power_bounds}
	\end{equation}
	Using \eqref{inner_power_bounds} on each factor, we obtain the bound, 
	\begin{equation}
		\begin{split}
		& \Psi^{p_{\Psi}} \prod_{j=1}^{\mathcal{I}}\Big(\frac{1}{N} \sum_{i_j} \prod_{l}^{p_j} \abs{\inner{q_{jl}, i_j}} \Big) \leq \Psi^{p_{\Psi}} \prod_{j = 1}^{\#_{\mathcal{I}}} \Big(\frac{1}{\sqrt{N}}\Big)^{\max\{p_j, 2\}}. 
		\end{split}
		\label{worst_p_dist_analysis}
	\end{equation} 

	Assume wlog that $p_{\Psi}$, $p_1, \ldots, p_{\#_{\mathcal{I}}}$ is the configuration giving the worst bound on the right of \eqref{worst_p_dist_analysis}. We will show that they satisfy \eqref{local_law_bound}. Rewrite the right side of \eqref{worst_p_dist_analysis} as follows, 
	\begin{equation}
		\begin{split}
			\Psi^{p_{\Psi}} \prod_{j = 1}^{\#_{\mathcal{I}}} \Big(\frac{1}{\sqrt{N}}\Big)^{\max\{p_j, 2\}} & = \Psi^{\mathfrak{r}_2 + 2 \mathfrak{r}_3} \Psi^{p_{\Psi} - (\mathfrak{r}_2 + 2 \mathfrak{r}_3)} \prod_{j=1}^{\#_{\mathcal{I}}} \Big(\frac{1}{\sqrt{N}}\Big)^{\max\{p_j, 2\}} \\
			& = \Psi^{\mathfrak{r}_2 + 2 \mathfrak{r}_3} \Psi^{- \sum_{j=1}^{\#_{\mathcal{I}}} p_j} \prod_{j=1}^{\#_{\mathcal{I}}} \Big(\frac{1}{\sqrt{N}}\Big)^{\max\{p_j, 2\}} \\ 
			& = \Psi^{\mathfrak{r}_2 + 2 \mathfrak{r}_3} \prod_{j=1}^{\#_{\mathcal{I}}} \Big(\frac{1}{\sqrt{N}}\Big)^{\max\{p_j, 2\}} \Psi^{-p_j}. 
		\end{split}
		\label{bounding_scalar_terms_step1}
	\end{equation} 
	Since $\Psi \geq N^{-1/3}$ we see that for $0 \leq p_j \leq 3$ we have 
	\begin{equation*}
		\Big(\frac{1}{\sqrt{N}}\Big)^{p_j} \Psi^{-p_j} \leq 1. 	
	\end{equation*} 
	Consequently, we can continue from \eqref{bounding_scalar_terms_step1} and obtain,
	\begin{equation}
		\begin{split}
		\Psi^{p_{\Psi}} \prod_{j = 1}^{\#_{\mathcal{I}}} \Big(\frac{1}{\sqrt{N}}\Big)^{\max\{p_j, 2\}} & \leq \Psi^{\mathfrak{r}_2 + 2 \mathfrak{r}_3} \prod_{\substack{1 \leq j \leq \#_{\mathcal{I}} \\ p_j \geq 4}} \frac{1}{N} \Psi^{-p_j} \\ 
		& \leq \Psi^{\mathfrak{r}_2 + 2 \mathfrak{r}_3} \prod_{\substack{1 \leq j \leq \#_{\mathcal{I}} \\ p_j \geq 4}} \Psi^{-(p_j - 3)} \\ 
		& = \Psi^{\mathfrak{r}_2 + 2 \mathfrak{r}_3} \Psi^{-\sum_{j=1}^{\#_{\mathcal{I}}} \max\{p_j - 3, 0\}}. \\ 
		\end{split}
		\label{bounding_scalar_terms_step2}
	\end{equation}
	For the second inequality in \eqref{bounding_scalar_terms_step2} we used that $1/N \leq \Psi^{3}$. 	
	
	Plugging \eqref{worst_p_dist_analysis} and \eqref{bounding_scalar_terms_step2} into \eqref{inner_factor_expansion_terms} we have that,
	\begin{equation}
		\begin{split}
			& (N^{1/3 + 2 \epsilon} \Psi)^{\mathfrak{r}_1 + \mathfrak{r}_2 + \mathfrak{r}_3} \Psi^{d - \mathfrak{r}_3} \Psi^{\mathfrak{c}} \Psi^{p_{\Psi}} \Big(\frac{1}{N} \sum_{i_1} \prod_{l}^{p_1} \abs{\inner{q_{1l}, i_1}} \Big) \cdots \Big(\frac{1}{N} \sum_{i_{\#_{\mathcal{I}}}} \prod_{l}^{p_{\#_{\mathcal{I}}}} \abs{\inner{q_{\#_{\mathcal{I}}l}, i_{\#_{\mathcal{I}}}}} \Big) \\
			& \quad \leq (N^{1/3 + 2 \epsilon} \Psi)^{\mathfrak{r}_1 + \mathfrak{r}_2 + \mathfrak{r}_3} \Psi^{d - \mathfrak{r}_3} \Psi^{\mathfrak{c}} \Psi^{\mathfrak{r}_2 + 2 \mathfrak{r}_3} \Psi^{-\sum_{j=1}^{\#_{\mathcal{I}}} \max\{p_j - 3, 0\}} \\ 
			& \quad = (N^{1/3 + 2 \epsilon} \Psi)^{r_{\sum}} \Psi^{d + \mathfrak{r}_2 + \mathfrak{r}_3 + \mathfrak{c}} \Psi^{-\sum_{j=1}^{\#_{\mathcal{I}}} \max\{p_j - 3, 0\}}.
		\end{split}
		\label{product_in_fac_bound}
	\end{equation} 

	Recall the assumption in \eqref{func_assumptions}, which gives that, \begin{equation*}
		\sum_{\mathcal{J}_1, \dots, \mathcal{J}_{\#_{\beta}}} \abs{ \mathfrak{w}_{\mathcal{J}_1} \cdots \mathfrak{w}_{\mathcal{J}_{\#_{\beta}}}} \leq C \prod_{k=1}^{\#_{\beta}} \frac{1}{\tau^{r_k - 1}}.
	\end{equation*}
	By using that $N^{1/3 + 2 \epsilon} \Psi \leq \mathcal{M}$ with probability $\geq 1 - N^{-D}$, and \eqref{Psi_bounds}, we insert the bound from \eqref{product_in_fac_bound} into \eqref{second_to_final_step} and obtain after absorbing the constants into $N^{\xi_7}$, 
	\begin{equation*}
		|\mathring{T}| \leq g(t) N^{\xi_7} N^{d_N} \mathcal{M}^{r_{\sum}} \Bigg( \prod_{k=1}^{\#_{\beta}} \frac{N^{\Fasump \epsilon}}{\tau^{r_k - 1}} \Bigg) \Bigg(\Big(\frac{1}{N \tilde{\eta}}\Big)^{d + \mathfrak{r}_2 + \mathfrak{r}_3 + \mathfrak{c} -\sum_{j=1}^{\#_{\mathcal{I}}} \max\{p_j - 3, 0\}} \Bigg). 
	\end{equation*} This is the first term of \eqref{local_law_bound}. 

	Finally, we bound $|T - \mathring{T}|$. The strategy is similar to the above. Bound the sum over $\mathcal{I}$ by $\binom{\#_{\mathcal{I}}}{2}$ sums where in each of the sums two indices in $\mathcal{I}(T)$ are grouped together to form terms $\hat{T}_k$. Hence, we want to bound $\binom{\#_{\mathcal{I}(T)}}{2}$ terms $\hat{T}_k$, $1 \leq k \leq \binom{\#_{\mathcal{I}(T)}}{2}$ of the form \eqref{general_term} with $d_N(\hat{T}_k) = d_N(T) - 1$, using \eqref{incidental_indices_inner_factor_bound}. We use the same strategy as in \eqref{second_to_final_step} and bound the averages of the products of inner factors of the $\hat{T}_k$, but instead of using \eqref{distinct_indices_inner_factor_bound} we use \eqref{incidental_indices_inner_factor_bound}. Note that the number of inner factors that fall into the second case of $B_1$ in \eqref{incidental_indices_inner_factor_bound} is exactly $\mathfrak{r}_2 + \mathfrak{r}_3$. The bounds of the average inner factors are \begin{equation}
		(N^{1/3 + 2 \epsilon} \Psi)^{r_{\sum}} \Psi^{n_2^{(\mathrm{tot})}} \Psi^{\hat{p}_{\Psi}} \Big(\prod_{l=1}^{\hat{p}_1} \abs{\inner{q_{1l}, i_1}} \Big) \cdots \Big(\prod_{l=1}^{\hat{p}_{\#_{\mathcal{I}(\hat{T}_k)}}} \abs{\inner{q_{\#_{\mathcal{I}(\hat{T}_k)} l}, i_{\#_{\mathcal{I}(\hat{T}_k)}}}} \Big),
		\label{incidental_inner_prod_bound}
	\end{equation} where the $\hat{p}_j$ sum to $\mathfrak{r}_2 + \mathfrak{r}_3 - \hat{p}_{\Psi}$. By repeating the argument using \eqref{inner_power_bounds} and choosing $\hat{p}_j$ as the configuration maximizing \eqref{incidental_inner_prod_bound} and wlog assuming they are ordered as $\hat{p}_1 \geq \dots \geq \hat{p}_{\#_{\mathcal{I}(\hat{T}_k)}}$, we obtain, \begin{equation}
		\begin{split}
		& \sum_{\mathcal{I}(\hat{T}_k)} (N^{1/3 + 2 \epsilon} \Psi)^{r_{\sum}} \Psi^{n_2^{(\mathrm{tot})}} \Psi^{\hat{p}_{\Psi}} \Big(\prod_{l=1}^{\hat{p}_1} \abs{\inner{q_{1l}, i_1}} \Big) \cdots \Big(\prod_{l=1}^{\hat{p}_{\#_{\mathcal{I}(\hat{T}_k)}}} \abs{\inner{q_{\#_{\mathcal{I}(\hat{T}_k)} l}, i_{\#_{\mathcal{I}(\hat{T}_k)}}}} \Big) \\ 
		& \quad \leq (N^{1/3 + 2 \epsilon} \Psi)^{r_{\sum}} \Psi^{n_2^{(\mathrm{tot})}} \Psi^{\mathfrak{r}_2 + \mathfrak{r}_3 - \sum_{j=1}^{\#_{\mathcal{I}(\hat{T}_k)}} \max\{\hat{p}_j - 3, 0\}}.
		\end{split}
		\label{second_to_last_step}
	\end{equation} Note that by the ordering assumption on $\hat{p}_j$ and since the indices in $\mathcal{I}(T)$ are grouped to create $\hat{T}_k$, they satisfy \eqref{p_conditions}. By inserting \eqref{second_to_last_step} into the analogue of \eqref{second_to_final_step}, we obtain, 
	\begin{equation*}
		\abs{T - \mathring{T}} \leq g(t) N^{\xi_7} N^{d_N} \mathcal{M}^{r_{\sum}} \Bigg(\prod_{k=1}^{\#_{\beta}} \frac{N^{\Fasump \epsilon}}{\tau^{r_k - 1}} \Bigg) \Bigg(\frac{1}{N} \Big(\frac{1}{N \tilde{\eta}}\Big)^{\mathfrak{r}_2 + \mathfrak{r}_3 + n_2^{(\mathrm{tot})} - \sum_{j=1}^{\#_{\mathcal{I}(T)}-1} \max\{\hat{p}_j - 3, 0\}} \Bigg).
	\end{equation*} 	
	With this we have finished the proof of Lemma \ref{local_law_bound_lemma}. 
	\end{proof}

	Equipped with Lemma \ref{local_law_bound_lemma}, we are ready to show Lemma \ref{first_comprison_step_lemma}. 	
	\begin{proof}[Proof of Lemma \ref{first_comprison_step_lemma}]
	We recall that the terms in the expansion of $\dt \EX[\widetilde{F}(X(t))]$, \eqref{cumulant_expansion1} have the form \eqref{general_term} and \eqref{general_term_expansion} in particular, with $g(t) = O(\e^{-t})$. So if we can show that all terms $T$ in the expansion of $\dt \EX[\widetilde{F}(X(t))]$ satisfy \eg $\abs{T} \leq O(\e^{-t}N^{-7})$, then \eqref{first_comparison_step} follows by integrating $\dt \EX[\widetilde{F}(X(t))]$ from $0$ to $\infty$. For the rest of the proof, assume $T$ is any term of the form \eqref{general_term} from the expansion of $\dt \EX[\widetilde{F}(X(t))]$. We shall show that for sufficiently small $\epsilon > 0$ and sufficiently large $N$, we have for any small constant $c > 0$,
	\begin{equation}
		\abs{T} \leq \e^{-t}N^{-1/6 + c}.
		\label{goal_bound}
	\end{equation} 
		
	Recall the conditions on the $p_j$ and $\hat{p}_j$ from \eqref{p_conditions}. Since $\sum_{j=1}^{\#_{\mathcal{I}}} p_j \leq \mathfrak{r}_2 + 2\mathfrak{r}_3$, and $d \geq \mathfrak{r}_3$, implying that
	\begin{equation}
		d + \mathfrak{r}_2 + \mathfrak{r}_3 + \mathfrak{c} - \sum_{j=1}^{\#_{\mathcal{I}}} \max\{p_j - 3, 0\}	\geq \min\{d + \mathfrak{r}_2 + \mathfrak{r}_3 + \mathfrak{c}, 3\}. 
		\label{deg_lower_bound}
	\end{equation} 
	Similarly, we also have,
	\begin{equation}
		\mathfrak{r}_2 + \mathfrak{r}_3 + n_2^{(\mathrm{tot})} - \sum_{j=1}^{\#_{\mathcal{I}}-1} \max\{\hat{p}_j - 3, 0\}	\geq \min\{\mathfrak{r}_2 + \mathfrak{r}_3 + n_2^{(\mathrm{tot})}, 3\}. 
		\label{hat_deg_lower_bound}
	\end{equation} 
	In particular, \eqref{deg_lower_bound} and \eqref{hat_deg_lower_bound} imply that, for any $\xi_7 > 0$ and sufficiently large $N$ we have,
	\begin{equation}
		\abs{T} \leq g(t) N^{\xi_7} N^{d_N} \mathcal{M}^{r_{\sum}} \prod_{k=1}^{\#_{\beta}} \frac{N^{\Fasump \epsilon}}{\tau^{r_k - 1}} \Bigg(N^{(d + \mathfrak{r}_2 + \mathfrak{r}_3 + \mathfrak{c})(-1/3 + 14 \epsilon)} + N^{-1 + 42 \epsilon}\Bigg).
		\label{simplified_bound}
	\end{equation}
	
	Since $T$ must have inner factors, meaning $d + \mathfrak{r}_2 + \mathfrak{r}_3 + \mathfrak{c} \geq 1$, we see that if $T$ comes from \eqref{cumulant_expansion1} with $p + 1 \geq 4$, we have $d_N(T) \leq 0$. So then we get immediately from \eqref{simplified_bound} that \eqref{goal_bound} is satisfied. Hence, the only remaining case is when $d_N(T) > 0$. This can only happen if $T$ comes from $p+1 = 3$ and has a $\kappa_3$-factor. The rest of the proof focuses on this case. 

	If we have $d + \mathfrak{r}_2 + \mathfrak{r}_3 + \mathfrak{c} \geq 2$, \eqref{simplified_bound} is directly gives \eqref{goal_bound}, so the only remaining case to consider is when $d + \mathfrak{r}_2 + \mathfrak{r}_3 + \mathfrak{c} = 1$. This condition means that we can only have one inner factor, and only one $\Dim$-factor, and $d=0$. Further, since it comes from \eqref{cumulant_expansion1} with $p+1=3$, $a$ and $b$ both occur exactly three times. This leaves us with exactly two possible terms, 
	\begin{align}
		& g(t) \frac{N^{1/2}}{N^{2}} \sum_{a, b} \kappa_3(a, b) \sum_{\beta \in [[1, K]]} \EX\Bigg[(\partial_{\beta} F)(\widetilde{\func}_{\tau}(X(t))) \sum_{\mathcal{J} \in J} \mathfrak{w}_{\mathcal{J}} \frac{N}{\pi} \int_I \im (G_{qa} G_{bq} G_{aa} G_{bb}) \varrho_{\alpha} \di E\Bigg], \label{first_remaining_case} \\
		& g(t) \frac{N^{1/2}}{N^{2}} \sum_{a, b} \kappa_3(a, b) \sum_{\beta \in [[1, K]]} \EX\Bigg[(\partial_{\beta} F)(\widetilde{\func}_{\tau}(X(t))) \sum_{\mathcal{J} \in J} \mathfrak{w}_{\mathcal{J}} \frac{N}{\pi} \int_I \im (G_{qq}) \varrho_{\alpha}' \Dim (G_{aa} G_{ab} G_{bb})\di E\Bigg]. \label{second_remaining_case}
	\end{align}
	These can be expanded into terms with $d + \mathfrak{r}_2 + \mathfrak{r}_3 + \mathfrak{c} \geq 2$. The expansion technique we use is very similar to the one used for unmatched terms in \cite{schnelli_xu_generalized_22,Schnelli_Xu_2022,bucht_schnelli_xu_2025,green_function_comparison_reference}.

	We shall demonstrate how to expand \eqref{first_remaining_case}, expanding \eqref{second_remaining_case} is done in an analogous manner. For the expansion we use the identity,
	\begin{equation}
		G_{au} = \inner{e_a, u} \underline{G} + G_{au} \underline{HG} - \underline{G} \sum_{c} h_{ac} G_{cu},
		\label{green_function_identity1}
	\end{equation} where $u$ can be either an index or a unit vector. We expand $G_{qa}$ in \eqref{first_remaining_case} and obtain, 
	\begin{equation}
		\begin{split}
			& g(t) \frac{N^{1/2}}{N^{2}} \sum_{a, b} \kappa_3(a, b) \sum_{\beta \in [[1, K]]} \EX\Bigg[(\partial_{\beta} F)(\widetilde{\func}_{\tau}(X(t))) \sum_{\mathcal{J} \in J} \mathfrak{w}_{\mathcal{J}} \frac{N}{\pi} \int_I \im (G_{qa} G_{bq} G_{aa} G_{bb}) \varrho_{\alpha} \di E\Bigg] \\
			& = g(t) \frac{N^{1/2}}{N^{2}} \sum_{a, b} \kappa_3(a, b) \sum_{\beta \in [[1, K]]} \EX\Bigg[(\partial_{\beta} F)(\widetilde{\func}_{\tau}(X(t))) \sum_{\mathcal{J} \in J} \mathfrak{w}_{\mathcal{J}} \frac{N}{\pi} \int_I \im (\inner{e_a, q}\underline{G} G_{bq} G_{aa} G_{bb}) \varrho_{\alpha} \di E\Bigg] \\
			& \quad + g(t) \frac{N^{1/2}}{N^{3}} \sum_{a, b, c, d} \kappa_3(a, b) \sum_{\beta \in [[1, K]]} \EX\left[(\partial_{\beta} F)(\widetilde{\func}_{\tau}(X(t))) \sum_{\mathcal{J} \in J} \mathfrak{w}_{\mathcal{J}} \frac{N}{\pi} \int_I \im (h_{cd}(t) G_{cd} G_{qa} G_{bq} G_{aa} G_{bb}) \varrho_{\alpha} \di E\right] \\
			& \quad - g(t) \frac{N^{1/2}}{N^{3}} \sum_{a, b, c, d} \kappa_3(a, b) \sum_{\beta \in [[1, K]]}  \EX\left[(\partial_{\beta} F)(\widetilde{\func}_{\tau}(X(t))) \sum_{\mathcal{J} \in J} \mathfrak{w}_{\mathcal{J}} \frac{N}{\pi} \int_I \im (h_{ac}(t) G_{cq} G_{dd} G_{bq} G_{aa} G_{bb}) \varrho_{\alpha} \di E\right].
		\end{split}
		\label{unmatched_expansion_example}
	\end{equation} 
	Then, we cumulant expand the two last terms above. From these cumulant expansions, all terms but two satisfy $d + \mathfrak{r}_2 + \mathfrak{r}_3 + \mathfrak{c}_2 + \hat{d} \geq 2$, where the two exceptional terms arise when $\partial/ \partial h_{cd}(t)$ acts on $G_{cd}$ generating $G_{cc} G_{dd}$ and when $\partial/ \partial h_{ac}(t)$ acts on $G_{cq}$ generating $G_{cc} G_{aq}$. Thus, the leading terms from the cumulant expansions are
	\begin{equation*}
		\begin{split}
			& g(t) \frac{N^{1/2}}{N^{4}} \sum_{a, b, c, d} \frac{\kappa_3(a, b)}{(1 + \delta_{ab})^2}  \frac{\kappa_2(c, d)}{1 + \delta_{cd}} \sum_{\beta \in [[1, K]]}  \EX\left[(\partial_{\beta} F)(\widetilde{\func}_{\tau}(X(t))) \sum_{\mathcal{J} \in J} \mathfrak{w}_{\mathcal{J}} \frac{N}{\pi} \int_I \im (G_{cc} G_{dd} G_{qa} G_{bq} G_{aa} G_{bb}) \varrho_{\alpha} \di E\right] \\
			& - g(t) \frac{N^{1/2}}{N^{4}} \sum_{a, b, c, d}\frac{\kappa_3(a, b)}{(1 + \delta_{ab})^2}  \frac{\kappa_2(a, c)}{1 + \delta_{ac}} \sum_{\beta \in [[1, K]]}  \EX\left[(\partial_{\beta} F)(\widetilde{\func}_{\tau}(X(t))) \sum_{\mathcal{J} \in J} \mathfrak{w}_{\mathcal{J}} \frac{N}{\pi} \int_I \im (G_{cc}  G_{qa} G_{dd} G_{bq} G_{aa} G_{bb}) \varrho_{\alpha} \di E\right].
		\end{split}
	\end{equation*} 
	These terms have exactly the same $u_i, v_i$, but do not cancel exactly since we have different $\kappa$-factors in front. But we can replace $G_{dd} = m_{sc} + (G_{dd} - m_{sc})$, in both terms. For the terms with $G_{dd} - m_{sc}$, we obtain the same bound that we would have with $d = 1$, so they satisfy \eqref{goal_bound}, and the terms with $G_{dd}$ replaced by $m_{sc}$ cancel (almost) precisely since by the variance assumption \eqref{sum_condition} we have, \begin{equation*}
		\frac{1}{N} \sum_{d} \frac{\kappa_2(c, d)}{1 + \delta_{cd}} - \frac{1}{N} \sum_{d} 1 = O\Big(\frac{1}{N}\Big).
	\end{equation*} 
	Now we are almost done. The only remaining term is the first one of the right side of \eqref{unmatched_expansion_example}. When we bound it using the same techniques as in \eqref{local_law_bound}, we gain a factor $1/\sqrt{N}$ from the $\inner{e_a, q}$, yielding the bound,
	\begin{equation*}
		g(t) N^{\xi_7} N^{1/2} \mathcal{M} N^{\Fasump \epsilon} \Bigg(\frac{1}{\sqrt{N}}N^{-1/3 + 14 \epsilon} + N^{-1 + 42 \epsilon}\Bigg),
	\end{equation*}
	that directly implies \eqref{goal_bound}.

	As mentioned above, the argument for \eqref{second_remaining_case} works in the same manner so we omit it. Finally, note that we truncate the expansion \eqref{cumulant_expansion1} at a finite value, hence we have a constant finite number of terms. Thus, \eqref{goal_bound} holding for all terms in the expansion \eqref{general_term_expansion} concludes the proof of \eqref{first_comparison_step}. \end{proof}

	\section{Interpolating between Gaussian with variance profile and $\GOE$}
	\label{second_comparison_section}
	In this section we prove Lemma \ref{gaussian_to_gaussian_lemma} by using the flow $H^{(2)}(t)$ given in \eqref{gaussian_to_gaussian_interpolating_flow}. We will write $H(t) = H^{(2)}(t)$, and $G = G(t, z) = (H(t) - zI)^{-1}$. With this $H(t)$ we have \begin{equation}
		\dot{h}_{ab}(t) = -\frac{1}{2} \e^{-\frac{t}{2}} w_{ab} + \frac{\e^{-t}}{2 \sqrt{1 - \e^{-t}}} w_{ab}^{(\widetilde{v})}. 
		\label{gaussian_entries_time_derivative}
	\end{equation} Further, the cumulants of $w_{ab}$ and $w_{ab}^{(\widetilde{v})}$ are given by \begin{equation}
		c^{(2)}(w_{ab}) = \frac{1+\delta_{ab}}{N}, \quad c^{(2)} (w_{ab}^{(\widetilde{v})}) = (1 + \delta_{ab}) V_{ab}, \quad c^{(k)}(w_{ab}) = c^{(k)}(w_{ab}^{(\widetilde{v})}) = 0, \quad k = 1, 3, 4, \ldots.
	\end{equation} By using \eqref{gaussian_entries_time_derivative} we can cumulant expand \eqref{basic_time_derivative_computation}. We obtain \begin{equation}
		\begin{split}
		& \dt \EX[\widetilde{F}(X(t))] = \frac{\e^{-t}}{4} \sum_{a,b} \Big(V_{ab} - \frac{1}{N}\Big) (1 + \delta_{ab})^2 \EX \Bigg[\frac{\partial^2 (\widetilde{F}(X(t)))}{\partial h_{ab}(t)^2}\Bigg].
		\end{split}
		\label{cumulant_expanded_gaussians}
	\end{equation} 

	Using the derivative rules \eqref{derivative_rule_gen}--\eqref{func_deriv_rule} we can expand the derivatives. One of the terms we obtain come from the derivative hitting $\Dim G_{ab}$ in the second line of \eqref{basic_time_derivative_computation}. From the derivative hitting $\Dim G_{ab}$ we will get two terms, one of which is \begin{equation}
	\frac{\e^{-t}}{4} \sum_{a, b} \left(V_{ab} - \frac{1}{N}\right) \sum_{\beta \in [[1, K]]} \EX\left[(\partial_{\beta} F)(\widetilde{\func}_{\tau}(X(t))) \sum_{\mathcal{J} \in J} \partial_{\mathcal{J}} \func_\tau^{\beta}(X(t)) \frac{N}{\pi} \int_I (\im G_{qq}) \varrho_{\alpha(\mathcal{J})}'(Y(t)) \Dim (G_{aa} G_{bb}) \di E \right].	
	\label{example_term}
	\end{equation} 
	Note that the factor $(1 + \delta_{ab})^2$ is cancelled. An important property of the above term is that if the expression inside the expectation did not contain,   \eg the index $a$, the term evaluates to $0$, since $\sum_{a} (V_{ab} - 1/N) = 0$. For the remainder of the section, we omit writing out the arguments of $\partial_{\beta} F$, $\partial_{\mathcal{J}} \func_\tau^{\beta}$, and $\varrho_{\alpha(\mathcal{J})}$ (and its derivatives). They will always be $\widetilde{\func}_\tau(X(t))$, $X(t)$, and $Y(t)$, respectively.

	We use the same notation as \cite{schnelli_xu_generalized_22} and introduce, \begin{equation}
		\Pi := (1/N)_{1 \leq i , j \leq N}, \qquad \mathring{V} := V - \Pi. 
		\label{V_ring_def}
	\end{equation} In this notation, the factor $(V_{ab} - 1/N) = \mathring{V}_{ab}$. One can check that $\mathring{V}$ is a real symmetric matrix that commutes with $V$ and for any $k \geq 1$ \begin{equation}
		\mathring{V}^k = V^k - \Pi, \qquad \mathring{V}V^k = \mathring{V}^{k+1}. 
		\label{V_ring_powers_eq}
	\end{equation} Further we have the following result: \begin{lemma}[Lemma 4.2 from \cite{schnelli_xu_generalized_22}]
		For any $k \geq 1$, $\mathring{V}$ satisfies the following properties \begin{equation}
			\sum_{b = 1}^N (\mathring{V}^k)_{ab} = 0, \quad \norm{\mathring{V}^k}_{\max} \leq \frac{C_0}{N},
		\label{T_prop_lemma_eq}
		\end{equation} where $C_0 = 2(\Csup + 1)$ with $\Csup \geq \Cinf > 0$ given in \eqref{mean_field_condition}. Furthermore, there exists a constant $c_0$ with $\Cinf \leq c_0 \leq 1$ such that \begin{equation}
			\norm{\mathring{V}^k}_{\max} \leq \norm{\mathring{V}^k} \leq (1 - c_0)^k. 
			\label{T_powers_norm_decay}
		\end{equation}
		\label{T_prop_lemma}
	\end{lemma}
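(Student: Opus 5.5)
The plan is to reduce everything to two structural facts about the variance profile. First, $V$ is symmetric with row and column sums equal to $1$, by \eqref{sum_condition} and $V_{ij}=\EX[h_{ij}^2]=V_{ji}$. Second, $\Pi$ is the orthogonal projection onto the normalized constant vector $\mathbf{e}:=N^{-1/2}(1,\dots,1)$.

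\textbf{Algebraic identities.} From the first fact, $V\mathbf{e}=\mathbf{e}$, and hence $V\Pi=\Pi V=\Pi$ and $\Pi^2=\Pi$. Expanding $(V-\Pi)^k$, every mixed product collapses to $\pm\Pi$, and an induction on $k$ gives \eqref{V_ring_powers_eq}: $\mathring V^k=V^k-\Pi$ and $\mathring V V^k=\mathring V^{k+1}$. Symmetry of $\mathring V$ and commutation with $V$ are immediate.

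\textbf{Row sums and max-norm.} Since $V^k$ and $\Pi$ both have all row sums equal to $1$, the row sums of $\mathring V^k$ vanish. For the max-norm bound, write
\[
(V^k)_{ab}=\sum_c V_{ac}(V^{k-1})_{cb}\le \frac{\Csup}{N}\sum_c (V^{k-1})_{cb}=\frac{\Csup}{N},
\]
using \eqref{mean_field_condition}, entrywise nonnegativity, and the fact that the column sums of $V^{k-1}$ equal $1$ (for $k=1$ use \eqref{mean_field_condition} directly). Together with $0\le \Pi_{ab}=1/N$ this gives $\norm{\mathring V^k}_{\max}\le (\Csup+1)/N\le C_0/N$.

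\textbf{Spectral gap.} This is the only step with any content. Summing \eqref{mean_field_condition} over a row forces $\Cinf\le 1$. If $\Cinf=1$ then $V=\Pi$ and $\mathring V=0$, so there is nothing to prove. Otherwise decompose
\[
V=\Cinf\,\Pi+(1-\Cinf)S,\qquad S:=\frac{V-\Cinf\Pi}{1-\Cinf}.
\]
The matrix $S$ is symmetric, entrywise nonnegative because $V_{ab}\ge \Cinf/N$, and doubly stochastic. The Schur test (or Perron–Frobenius) therefore gives $\norm{S}\le 1$.

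Since $S\mathbf{e}=\mathbf{e}$ and $S$ is symmetric, $S$ preserves $\mathbf{e}^\perp$. Now $\mathring V$ vanishes on $\mathbf{e}$ and equals $(1-\Cinf)S$ on $\mathbf{e}^\perp$. Hence $\norm{\mathring V}\le 1-\Cinf$, and the claim holds with $c_0:=\Cinf$, which satisfies $\Cinf\le c_0\le 1$. Submultiplicativity gives $\norm{\mathring V^k}\le(1-c_0)^k$.

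Finally, $\norm{A}_{\max}\le\norm{A}$ because $|A_{ab}|=|\inner{e_a,Ae_b}|$. This yields \eqref{T_powers_norm_decay} and completes the proof.
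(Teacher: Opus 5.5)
Your proof is correct and complete. Note that the paper gives no proof of this lemma: it is imported verbatim from \cite{schnelli_xu_generalized_22}, and even the identities \eqref{V_ring_powers_eq} are only asserted (``one can check''). So there is no in-paper argument to compare against, and your proposal supplies a self-contained verification along the standard lines for a doubly stochastic variance profile. Each step holds:
\begin{itemize}
\item The relations $V\Pi=\Pi V=\Pi$ and $\Pi^2=\Pi$ give \eqref{V_ring_powers_eq} and the vanishing row sums.
\item Column-stochasticity of $V^{k-1}$ gives the bound $\norm{\mathring{V}^k}_{\max}\le(\Csup+1)/N$, uniform in $k$. This is already better than the stated $C_0/N$.
\item The splitting $V=\Cinf\Pi+(1-\Cinf)S$, with $S$ symmetric, nonnegative and doubly stochastic, yields $\norm{\mathring{V}}\le 1-\Cinf$ on $\mathbf{e}^\perp$ and $0$ on $\mathbf{e}$. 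This makes the admissible choice $c_0=\Cinf$ explicit.
\end{itemize}
Separating out the degenerate case $\Cinf=1$, where $S$ is undefined and $\mathring{V}=0$, is also handled correctly.
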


	We can use \eqref{T_prop_lemma_eq} to replace $G_{aa}$ in \eqref{example_term} with $G_{aa} - m_{\mathrm{sc}}$, since $G_{aa} = G_{aa} - m_{\mathrm{sc}} + m_{\mathrm{sc}}$. And if we replace $G_{aa}$ in \eqref{example_term} by $m_{\mathrm{sc}}$, which does not depend on $a$, we get $0$. We can repeat the same procedure for $b$. Hence, \begin{equation}
		\begin{split}
		& \frac{\e^{-t}}{4} \sum_{a, b} \left(V_{ab} - \frac{1}{N}\right) \sum_{\beta \in [[1, K]]} \EX\Bigg[(\partial_{\beta} F) \sum_{\mathcal{J} \in J} (\partial_{\mathcal{J}} \func_\tau^{\beta}) \frac{N}{\pi} \int_I (\im G_{qq}) \varrho_{\alpha(\mathcal{J})}' \Dim (G_{aa} G_{bb}) \di E \Bigg] \\
		& = \frac{\e^{-t}}{4} \sum_{a, b} \left(V_{ab} - \frac{1}{N}\right) \sum_{\beta \in [[1, K]]} \EX\Big[(\partial_{\beta} F) \sum_{\mathcal{J} \in J} (\partial_{\mathcal{J}} \func_\tau^{\beta}) \frac{N}{\pi} \int_I (\im G_{qq}) \varrho_{\alpha(\mathcal{J})}' \Dim (G_{aa} - m_{\mathrm{sc}}) (G_{bb} - m_{\mathrm{sc}}) \di E \Big].
		\end{split}
		\label{exapmle_expand_term}
	\end{equation}

	Call the term \eqref{example_term} $T$. Recall the random control parameter $\Psi$ introduced in \eqref{Psi_def}, and the local law bound on $\Psi$ in \eqref{Psi_bounds}. If we bound $A$ naively using the local law, the bound on $\partial_{\mathcal{J}} \func_{\tau}^{\beta}$ in \eqref{func_assumptions}, the bound on $\partial_{\beta} F$ in \eqref{deriv_F_bound_assump}, and the second equation in \eqref{T_prop_lemma_eq} we obtain 
	\begin{equation*}
		\begin{split}
		\abs{T} & \leq C \e^{-t} \sum_{a, b} \mathring{V}_{ab} \EX\left[\abs{\frac{N}{\pi} \int_I (\im G_{qq}) \varrho_{\alpha(\mathcal{J})}' \Dim (G_{aa} - \msc) (G_{bb} - \msc) \di E} \right] \\
		& \leq C \e^{-t} \sum_{a, b} \mathring{V}_{ab} N \cdot N^{-2/3 + \epsilon} \Psi^3 \\
		& \leq \e^{-t} N^{1/3 + C \epsilon}.  
		\end{split}	
	\end{equation*} This bound is too big. The idea from \cite{schnelli_xu_generalized_22} that can be adapted to our setting is to recursively expand in the indices with inhomogeneous weights, \ie the indices that occur in the $\mathring{V}$. 

	We demonstrate how to expand $A$. We use the identities $1  = -z \msc (z) - \msc(z)^2$ and $z G_{aa} = (HG)_{aa} - 1$. First, replace the $G_{aa} - \msc$ by $G_{aa}$ and insert a $1$. Doing this we obtain \begin{equation*}
		\begin{split}
		T & =  -\frac{1}{4} \e^{-t} \sum_{a, b} \mathring{V}_{ab} \sum_{\beta \in [[1, K]]} \EX\left[(\partial_{\beta} F)  \sum_{\mathcal{J}} (\partial_{\mathcal{J}} \func_\tau^{\beta}) \frac{N}{\pi} \int_I (\im G_{qq}) \varrho_{\alpha(\mathcal{J})}' \Dim (z \msc + \msc^2) G_{aa} (G_{bb} - \msc) \di E \right] \\
		& = \frac{1}{4} \e^{-t} \sum_{a, b, c} \mathring{V}_{ab} \sum_{\beta \in [[1, K]]} \EX\left[h_{ac}(t) (\partial_{\beta} F)  \sum_{\mathcal{J}} (\partial_{\mathcal{J}} \func_\tau^{\beta}) \frac{N}{\pi} \int_I (\im G_{qq}) \varrho_{\alpha(\mathcal{J})}' \Dim \msc G_{ca} (G_{bb} - \msc) \di E \right]  \\
		& \quad \, + \frac{1}{4} \e^{-t} \sum_{a, b, c} \mathring{V}_{ab} \sum_{\beta \in [[1, K]]}  \EX\left[(\partial_{\beta} F)  \sum_{\mathcal{J}} (\partial_{\mathcal{J}} \func_\tau^{\beta}) \frac{N}{\pi} \int_I (\im G_{qq}) \varrho_{\alpha(\mathcal{J})}' \Dim \msc^2 G_{aa} (G_{bb} - \msc) \di E \right] \\
		& = \frac{1}{4} \e^{-t} \sum_{a, b, c} \mathring{V}_{ab} c^{(2)}(h_{ac}(t)) \sum_{\beta \in [[1, K]]}  \EX\left[\frac{\partial}{\partial h_{ac}(t)} \left((\partial_{\beta} F)  \sum_{\mathcal{J}} (\partial_{\mathcal{J}} \func_\tau^{\beta}) \frac{N}{\pi} \int_I (\im G_{qq}) \varrho_{\alpha(\mathcal{J})}' \Dim \msc G_{ca} (G_{bb} - \msc) \di E \right) \right]  \\
		& \quad \, + \frac{1}{4} \e^{-t} \sum_{a, b} \mathring{V}_{ab} \sum_{\beta \in [[1, K]]}  \EX\left[(\partial_{\beta} F)  \sum_{\mathcal{J}} (\partial_{\mathcal{J}} \func_\tau^{\beta}) \frac{N}{\pi} \int_I (\im G_{qq}) \varrho_{\alpha(\mathcal{J})}' \Dim \msc^2 G_{aa} (G_{bb} - \msc) \di E \right] 
		\end{split}
	\end{equation*} 
	We note that the cumulant of $h_{ac}(t)$ is given by, \begin{equation*}
		c^{(2)}(h_{ac}(t)) = \Big(\frac{\e^{-t}}{N} + (1- \e^{-t}) V_{ac}\Big) (1 + \delta_{ac}). 
	\end{equation*} The derivatives with respect to $h_{ac}(t)$ will generate a factor $1/(1 + \delta_{ac})$. We introduce a real symmetric time-dependent matrix \begin{equation}
		(V(t))_{ac} := \frac{c^{(2)}(h_{ac}(t))}{1 + \delta_{ac}} = \frac{\e^{-t}}{N} + (1-\e^{-t}) V_{ac}, \quad 1 \leq a, c \leq N. 
		\label{V_scnd_cumulant}
	\end{equation} One can check that $V(t)$ is a symmetric and doubly stochastic matrix. Additionally, we have \begin{equation*}
		V(t) = (1 - \e^{-t}) \mathring{V} + \Pi, \quad V(t)\mathring{V} = \mathring{V}V(t) = (1 - \e^{-t}) \mathring{V}^2. 
	\end{equation*} 
	After computing $\frac{\partial}{\partial h_{ac}(t)}$ in the equation above we will get one leading term, and several non-leading. The leading term will be when the derivative hits $G_{ca}$ and generates $G_{aa} G_{cc}$. All other factors that the derivative hits will give rise to terms with at least 4 off-diagonal Green function entries, meaning that the naive local law estimate will be improved compared to the one we have for $T$. So the leading terms will be \begin{equation*}
		\begin{split}
			T & = - \frac{1}{4} \e^{-t} \sum_{a, b, c} \mathring{V}_{ab} (V(t))_{ac} \sum_{\beta \in [[1, K]]} \EX\Bigg[(\partial_{\beta} F)  \sum_{\mathcal{J}} \partial_{\mathcal{J}} \func_\tau^{\beta}\frac{N}{\pi} \int_I (\im G_{qq}) \varrho_{\alpha(\mathcal{J})}' \Dim \msc G_{aa} G_{cc} (G_{bb} - \msc) \di E \Bigg]  \\
		& \quad \, + \frac{1}{4} \e^{-t} \sum_{a, b} \mathring{V}_{ab} \sum_{\beta \in [[1, K]]}  \EX\Bigg[(\partial_{\beta} F)  \sum_{\mathcal{J}} \partial_{\mathcal{J}} \func_\tau^{\beta} \frac{N}{\pi} \int_I (\im G_{qq}) \varrho_{\alpha(\mathcal{J})}' \Dim \msc^2 G_{aa} (G_{bb} - \msc) \di E \Bigg] \\
		& \quad \, + O\Bigg(\e^{-t} N^{C \epsilon} N^{4/3} \Big(\frac{1}{N \tilde{\eta}}\Big)^{4}\Bigg). 
		\end{split}
	\end{equation*} 
	Continuing we use that $\sum_{c} (V(t))_{ac} = 1$, so we may insert it in the second term of $A$ to see cancellation. Ignoring the non-leading terms we get:
	\begin{equation*}
		\hspace*{-1cm}
		\begin{split}
			T & = - \frac{1}{4} \e^{-t} \sum_{a, b, c} \mathring{V}_{ab} (V(t))_{ac} \EX\left[(\partial_{\beta}\widetilde{F})  \sum_{\mathcal{J}} (\partial_{\mathcal{J}} \func_\tau^{\beta}) \frac{N}{\pi} \int_I (\im G_{qq}) \varrho_{\alpha(\mathcal{J})}' \Dim \msc G_{aa} (\msc - G_{cc}) (G_{bb} - \msc) \di E \right]  \\
			& = \frac{1}{4} \e^{-t} \sum_{a, b, c} \mathring{V}_{ab} (V(t))_{ac} \EX\left[ (\partial_{\beta}\widetilde{F})  \sum_{\mathcal{J}} (\partial_{\mathcal{J}} \func_\tau^{\beta}) \frac{N}{\pi} \int_I (\im G_{qq}) \varrho_{\alpha(\mathcal{J})}' \Dim \msc^2 (\msc - G_{cc}) (G_{bb} - \msc) \di E \right] 	\\
			& \quad \, + \frac{1}{4} \e^{-t} \sum_{a, b, c} \mathring{V}_{ab} (V(t))_{ac} \EX\left[ (\partial_{\beta}\widetilde{F}) \sum_{\mathcal{J}} (\partial_{\mathcal{J}} \func_\tau^{\beta}) \frac{N}{\pi} \int_I (\im G_{qq}) \varrho_{\alpha(\mathcal{J})}' \Dim \msc (G_{aa} - \msc) (\msc - G_{cc}) (G_{bb} - \msc) \di E \right]  \\
			& =	\frac{1}{4} \e^{-t} (1 - \e^{-t}) \sum_{b, c} (\mathring{V}^2)_{bc} \EX\left[ (\partial_{\beta}\widetilde{F})  \sum_{\mathcal{J}} (\partial_{\mathcal{J}} \func_\tau^{\beta}) \frac{N}{\pi} \int_I (\im G_{qq}) \varrho_{\alpha(\mathcal{J})}' \Dim \msc^2 (\msc - G_{cc}) (G_{bb} - \msc) \di E \right] 	\\
			& \quad \, + \frac{1}{4} \e^{-t} \sum_{a, b, c} \mathring{V}_{ab} (V(t))_{ac} \EX\left[ (\partial_{\beta}\widetilde{F}) \sum_{\mathcal{J}} (\partial_{\mathcal{J}} \func_\tau^{\beta}) \frac{N}{\pi} \int_I (\im G_{qq}) \varrho_{\alpha(\mathcal{J})}' \Dim \msc (G_{aa} - \msc) (\msc - G_{cc}) (G_{bb} - \msc) \di E \right].
		\end{split}
	\end{equation*} 
	In the last step we used that the expression inside the expectation no longer is dependent of $a$, since we replaced it my $\msc$. Because of this we can rewrite $\sum_a \mathring{V}_{ab} (V(t))_{ac} = (\mathring{V} V(t))_{bc} = (1 - \e^{-t}) (\mathring{V}^2)_{bc}$. Note that the second term in the above expression also is non-leading since we gained the (small) factor $(\msc - G_{cc})$ compared to the term $A$ we started with. 
	But also the first term in the above expression is almost the same as $A$, except additional factors $\msc^2$ and $(1 - \e^{-t})$, and $\mathring{V}$ replaced by $\mathring{V}^2$. The factors $\msc^2$ and $(1 - \e^{-t})$ cause no trouble, since $\abs{\msc(z)} \leq 1$ in the regime we are considering, and of course $\abs{1 - \e^{-t}} \leq 1$ too. 
	
	The key point in the above expansion is that $\mathring{V}^2$ is smaller than $\mathring{V}$, see \eqref{T_powers_norm_decay}. The idea is to recursively repeat the above expansion $\widetilde{k} := C \log N$ times, since then $\norm{\mathring{V}^{\widetilde{k}}}_{\max} \leq (1 - c_0)^{C \log N} \leq N^{-D}$, where $D$ can be chosen arbitrarily large by adjusting $C$ accordingly. Similar expansions can also be carried out on the non-leading terms. 
	
	The details are analogous to \cite{schnelli_xu_generalized_22}, but there are some extra technicalities mainly arising from the $G$-factors containing the $q_i$. To handle the recursive expansions described above we introduce a general form of terms, similar to \eqref{general_term}. 
	\begin{definition} [General form of term]
	Let $\mathcal{I}$ be a length $d$ multi-index of free summation indices $i_1, \dots, i_d$, with $d \geq 2$. Let $\mathfrak{m}_0$ be an integer with $2 \leq \mathfrak{m}_0 \leq d$, and introduce integers $k_p \geq 1$ for $1 \leq p \leq \mathfrak{m}_0 - 1$. As before, let $\#_{\beta}$ be an integer and $(r_k)_{k=1}^{\#_{\beta}} \in (\mathbb{Z}_{\geq 1})^{\#_{\beta}}$ be multi-indices. Finally let $g(t) : \mathbb{R}^+ \to \mathbb{R}$ be a time factor satisfying $g(t) \leq C \e^{-t}$. The above data defines the sequence of functions $\R^+ \to \R$,  \begin{equation}
			t \mapsto g(t) \frac{1}{N^{d- \mathfrak{m}_0}} \sum_{\mathcal{I}} \prod_{p=1}^{\mathfrak{m}_0 - 1} (\mathring{V}^{k_p})_{i_p i_{p+1}} \sum_{\beta \in [[1, K]]^{\#_{\beta}}} \EX\left[(\partial_{\beta} F) \prod_{k=1}^{\#_{\beta}} \Bigg(\sum_{\mathcal{J} \in J^{r_k}} (\partial_{\mathcal{J}} \func_\tau^{\beta_k}) \prod_{s=1}^{r_k} \inFac(k, s, \mathcal{I}, \mathcal{J}) \Bigg)\right],
		\label{general_term2}
	\end{equation} where $\inFac(k, s, \mathcal{I}, \mathcal{J})$ denotes inner factors (\cf \eqref{inner_fac_def}). Let $q_i$ and $q_j$ denote the basis vectors $q_{i(\mathcal{J}(s))}$ and $q_{j(\mathcal{J}(s))}$ respectively, and let $\alpha = \alpha(\mathcal{J}(s))$. Each inner factor is defined by the following data: 
	\begin{itemize}
		\item  Natural numbers $n_1^{(\mathrm{off})}$, $n_1^{(\mathrm{diag})}$, $n_1^{(\mathrm{inner})}$, $y_1$, $n_2$, and $\hat{y}_l$, $\hat{n}_l^{(\mathrm{off})}$, $\hat{n}_l^{(\mathrm{diag})}$ for $1 \leq l \leq n_2$. Further, we enforce the condition, 
		\begin{equation}
			n_1^{(\mathrm{inner})} + n_1^{(\mathrm{off})}+ n_1^{(\mathrm{diag})} + \sum_{l=1}^{n_2} \hat{n}_l^{(\mathrm{off})} + \hat{n}_l^{(\mathrm{diag})} \geq 2. 
			\label{no_empty_inner_fac}
		\end{equation}
		\item $\mathcal{I}\cup\{q_i, q_j\}$-valued indices $u_i, v_i , u_i \neq v_i,  1 \leq i \leq n_1^{(\mathrm{off})}$, and $w_i, 1 \leq i \leq n_1^{(\mathrm{diag})}$.
		\item $\mathcal{I}$-valued indices $x_i, 1 \leq i \leq n_1^{(\mathrm{inner})}$.
		\item For $1 \leq l \leq n_2$: $\mathcal{I}$-valued indices $\hat{u}_j, \hat{v}_j,\hat{u}_j \neq \hat{v}_j,  1 \leq j \leq \hat{n}_l^{(\mathrm{off})}$ and $\hat{w}_j, 1 \leq j \leq \hat{n}_l^{(\mathrm{diag})}$. 
		\item $\{i, j\}$-valued indices $z_l$ for $1 \leq l \leq n_1^{(\mathrm{inner})}$.
	\end{itemize}
		The above data uniquely defines the inner factor, 
	\begin{equation}
		\frac{N}{\pi} \int_{I} \prod_{i=1}^{n_1^{(\mathrm{inner})}} \inner{q_{z_i}, e_{x_i}} \im \Big((\msc)^{y_1} \prod_{i=1}^{n_1^{(\mathrm{off})}} G_{u_i v_i} \prod_{i=1}^{n_1^{(\mathrm{diag})}} (G_{w_i w_i}- \msc) \Big) \varrho^{(n_2)}_{\alpha} \prod_{l=1}^{n_2} \Dim \Big((\msc)^{\hat{y}_l} \prod_{j=1}^{\hat{n}_{l}^{(\mathrm{off})}} G_{\hat{u}_j \hat{v}_j} \prod_{j=1}^{\hat{n}_{l}^{(\mathrm{diag})}} (G_{\hat{w}_j \hat{w}_j} - \msc)\Big) \di E.
		\label{inner_fac_def2}
	\end{equation} 

	For each inner factor we restrict $q_i$ and $q_j$ to occur exactly once in the $q_{z_i}$, $u_i, v_i$, and $q_i$ and $q_j$ cannot occur together in a $(u_i, v_i)$-pairs unless $n_1^{(\mathrm{off})} = 1$ and $n_1^{(\mathrm{diag})} = 0$. Also, if we have equality in \eqref{no_empty_inner_fac} and $n_1^{(\mathrm{inner})} = 2$ we require $y_1 \geq 1$. Additionally, we restrict each index $i_j \in \mathcal{I}$ to occur exactly twice in the indices $x_i, u_i, v_i, w_i, \hat{u}_j, \hat{v}_j, \hat{w}_j$ among all inner factors. We say that the indices $i_1, \dots, i_{\mathfrak{m}_0}$ have inhomogeneous weights $\prod_{p=1}^{\mathfrak{m}_0} (\mathring{V}^{k_p})_{i_p i_{p+1}}$, and that the remaining indices have homogeneous weights $1/N$. The indices $i_1$ and $i_{\mathfrak{m}_0}$ are special, since for these indices $\sum_{i_1} \prod_{p=1}^{\mathfrak{m}_0 - 1} (\mathring{V}^{k_p})_{i_p i_{p+1}} = \sum_{i_{\mathfrak{m}_0}} \prod_{p=1}^{\mathfrak{m}_0 - 1} (\mathring{V}^{k_p})_{i_p i_{p+1}}  = 0$, in particular, if these indices are eliminated from the inner factors, the term is exactly zero. As in the previous section, we denote $\sum_{k=1}^{\#_{\beta}} r_k =: r_{\sum}$. 

	We will usually denote a term of the form \eqref{general_term2} as $T$, and denote the data related to $T$ as $d(T)$, $(\#_{\beta}) (T)$, $\mathfrak{m}_0(T)$, etc. 
	\label{general_term_def2}
	\end{definition} 

	Because of the requirement that each index $i_1, \dots, i_d$ occurs exactly twice among the Green function entries we have that \begin{equation*}
		2 \sum_{k=1}^{\#_{\beta}} \sum_{s = 1}^{r_k} \Big(- 1 +  n_1^{(\mathrm{inner})}(k, s) + n_1^{(\mathrm{off})}(k, s) + n_1^{(\mathrm{diag})}(k, s) + \sum_{l=1}^{n_2(k, s)} \hat{n}_l^{(\mathrm{off})}(k, s) + \hat{n}_l^{(\mathrm{diag})}(k, s)\Big) = 2d,
	\end{equation*} where the $-1$ in the sum comes from the $q_i$ and $q_j$ in the $u_i$ and $v_i$. To shorten notation, introduce \begin{equation*}
		\begin{split}
			& n_1^{(\mathrm{tot})} := \sum_{k=1}^{\#_{\beta}} \sum_{s=1}^{r_k} n_1^{(\mathrm{off})}(k, s) + n_1^{(\mathrm{diag})}(k, s), \\
			& n_2^{(\mathrm{tot})} := \sum_{k=1}^{\#_{\beta}} \sum_{s=1}^{r_k} \sum_{l=1}^{n_2(k, s)} \hat{n}_l^{(\mathrm{off})}(k, s) + \hat{n}_l^{(\mathrm{diag})}(k, s), \\
			& n_1^{(\mathrm{intot})} := \sum_{k=1}^{\#_{\beta}} \sum_{s=1}^{r_k} n_1^{(\mathrm{inner})}, \\ 
			& n_{\sum} := n_1^{\mathrm{(tot)}} + n_2^{\mathrm{(tot)}} + n_1^{\mathrm{intot}}. 
		\end{split}
		\end{equation*}
		Using this notation, we have \begin{equation}
			n_{\sum} - r_{\sum}= d.
			\label{n_d_relation}
		\end{equation} 

	Next we provide a naive local law bound of terms of the form \eqref{general_term2} analogous to Lemma \ref{local_law_bound_lemma}. 

	\begin{lemma}
		Let $T$ be a general term of the form \eqref{general_term2}, denote $\hat{k} := \max_{1 \leq p \leq \mathfrak{m}_0} k_p$, let $\mathcal{M} = N^{1/3 + \epsilon + \xi_6 \epsilon}/(N \tilde{\eta})$, for some constant $0 < \xi_6 \leq 1$. 
			In the case $n_1^{(\mathrm{intot})} < 4$ we have for sufficiently large $N$,  \begin{align}
			& \abs{T} \leq g(t) N \mathcal{M}^{r_{\sum}} \Big(\prod_{l=1}^{\#_{\beta}} \frac{N^{\Fasump \epsilon}}{\tau^{r_l - 1}} \Big) \Bigg( \Big(\frac{1}{N \tilde{\eta}}\Big)^{d} + \frac{1}{N^2 \tilde{\eta}}  \Bigg), \label{degree_small}\\
			& \abs{T} \leq g(t) N^2 (1 - c_0)^{\hat{k}} \mathcal{M}^{r_{\sum}}  \Big(\prod_{l=1}^{\#_{\beta}} \frac{N^{\Fasump \epsilon}}{\tau^{r_l - 1}} \Big) \Bigg(\Big(\frac{1}{N \tilde{\eta}}\Big)^{d} + \frac{1}{N^2 \tilde{\eta}} \Bigg). \label{many_expansions}
		\end{align}
		In the case $n_1^{(\mathrm{intot})} \geq 4$, we have for sufficiently large $N$,
		\begin{equation}
			\abs{T} \leq g(t) \frac{1}{N} \mathcal{M}^{r_{\sum}} \Big(\prod_{l=1}^{\#_{\beta}} \frac{N^{\Fasump \epsilon}}{\tau^{r_l - 1}} \Big).
			\label{big_n1_intot}
		\end{equation}
		\label{step2_local_law_bound_lemma}
	\end{lemma}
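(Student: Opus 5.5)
The plan follows the proof of Lemma \ref{local_law_bound_lemma} closely. The changes are the inhomogeneous weights $\prod_p (\mathring{V}^{k_p})_{i_p i_{p+1}}$, which replace $\kappa(\mathcal{I})/N^{\#_\mathcal{I}}$, and the explicit scalar products $\inner{q_{z_i}, e_{x_i}}$.

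\emph{Step 1 (reduction to an average).} Pull $\abs{\partial_\beta F}$ out using \eqref{deriv_F_bound_assump}, and sum $\abs{\partial_{\mathcal{J}}\func_\tau^{\beta_k}}$ over $\mathcal{J}$ using \eqref{func_assumptions}. This produces the factor $\prod_l N^{\Fasump\epsilon}/\tau^{r_l-1}$, exactly as in \eqref{second_to_final_step}. It then remains to bound, on the high-probability event of Lemma \ref{uniform_local_law}, the weighted sum over $\mathcal{I}$ of $\prod_{k,s}\abs{\inFac(k,s,\mathcal{I},\mathcal{J}_k)}$. The event has probability at least $1-N^{-D}$, and its complement is absorbed using the deterministic bound $\abs{G}\le \tilde\eta^{-1}$.

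\emph{Step 2 (pointwise bound on each inner factor).} Integrate over $I$, which gives the factor $N^{1/3+2\epsilon}$. Each off-diagonal $G_{u_iv_i}$ with $u_i\neq v_i$ is bounded by $\Psi$; for pairs containing a $q$-vector this requires subtracting $\inner{u,v}\msc$, which gives $\Psi+\abs{\inner{u,v}}$ as in \eqref{distinct_indices_inner_factor_bound}. Each $(G_{ww}-\msc)$ is bounded by $\Psi$, and each $\Dim(\cdots)$ factor by $\Psi$. The $\im$ supplies one more $\Psi$ when no other small factor is present. Condition \eqref{no_empty_inner_fac}, together with the $y_1\ge1$ requirement, ensures that every inner factor carries at least one $\Psi$, so it is bounded by $\mathcal{M}$ times the remaining $\Psi$-powers. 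The total number of small factors is $n_1^{(\mathrm{tot})}+n_2^{(\mathrm{tot})}-r_{\sum}$ plus the scalar products, which by \eqref{n_d_relation} gives the power $\Psi^{d}$ up to the scalar-product bookkeeping. Coincident indices are handled as in $T-\mathring{T}$: each coincidence loses one free sum, costing a factor $N^{-1}$, and may destroy at most two off-diagonal entries, each previously worth $(N\tilde\eta)^{-1}$. This is the origin of the second term $1/(N^2\tilde\eta)$.

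\emph{Step 3 (summation over $\mathcal{I}$).} Homogeneous indices carry weight $1/N$, and there are $d-\mathfrak{m}_0$ of them. For the $\mathfrak{m}_0$ inhomogeneous indices I use $\norm{\mathring{V}^k}_{\max}\le C_0/N$ from \eqref{T_prop_lemma_eq} for all but one link. The free index $i_1$ then contributes $\sum_{i_1}\le N$, producing the prefactor $N$ in \eqref{degree_small}. For \eqref{many_expansions} I instead use \eqref{T_powers_norm_decay} on the link with $k_p=\hat k$, giving $(1-c_0)^{\hat k}$, at the cost of one more power of $N$.

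The factors $\abs{\inner{q,e_{x_i}}}$ are summed with \eqref{inner_power_bounds}: each index carrying two of them gains $1/N$ instead of $1$. When $n_1^{(\mathrm{intot})}\ge4$, the scalar products come in pairs on at least two indices, which I expect to gain about $N^{-2}$ and to override the loss of the $\Psi$'s they replace. This yields \eqref{big_n1_intot} after using $\Psi\le1$ for the remaining Green function factors.

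\emph{Main obstacle.} The hardest step is the combinatorial bookkeeping that assigns, for each expanded term, at least $d$ factors of $\Psi$ (or equivalent gains $N^{-1/2}$ from scalar products, using $\Psi\ge N^{-1/3}$ as in \eqref{bounding_scalar_terms_step2}), uniformly over all configurations of coinciding indices. This is also the step I am least sure accounts for every case, in particular whether the $N^{-1/2}$ gains from scalar products always suffice in place of the $\Psi$ factors. I would handle it as in Lemma \ref{local_law_bound_lemma}: order the indices by how many scalar products they carry, take the worst distribution, and check that the deficit $\sum\max\{p_j-3,0\}$ only arises in the regime $n_1^{(\mathrm{intot})}\ge4$, where \eqref{big_n1_intot} applies instead.
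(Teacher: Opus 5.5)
Your outline follows the paper's argument. It uses the same reduction as in \eqref{second_to_final_step}, the bound $\norm{\mathring{V}^{k}}_{\max}\le C_0/N$ on every link for \eqref{degree_small}, and \eqref{T_powers_norm_decay} on the link carrying $\hat{k}$ for \eqref{many_expansions}. For \eqref{big_n1_intot} it uses four scalar products worth $N^{-2}$ together with the $\im$.

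The step you leave open is closed by an observation you never invoke. By Definition~\ref{general_term_def2}, each index $i_j$ occurs exactly twice, so for distinct indices $p_j\le 2$. By \eqref{inner_power_bounds}, every scalar product then gains at least $N^{-1/2}\le N^{-1/3}\le\Psi$ after averaging. Trading a $\Psi$ for a scalar product therefore never hurts. The worst configuration is simply the one with $p_\Psi$ maximal, and \eqref{n_d_relation} then gives $\Psi^{d}$ directly. In the distinct-index part, no deficit $\sum_j\max\{p_j-3,0\}$ arises for any value of $n_1^{(\mathrm{intot})}$.

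The threshold $n_1^{(\mathrm{intot})}\ge 4$ is therefore not where a deficit appears. It only marks when the mandatory scalar products alone beat the prefactor $N$. They also need not come in pairs: on four distinct indices they still give $(N^{-1/2})^4=N^{-2}$.

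One attribution is off. Your coincidence count, at most two lost off-diagonal entries per coincidence against a gain $N^{-1}$, gives $N^{-1}(N\tilde\eta)^{-(d-2)}\le (N\tilde\eta)^{-d}$, because $(N\tilde\eta)^2\le N$. So it is absorbed into the first term rather than producing $1/(N^2\tilde\eta)$. In the paper, that second term comes from a cruder bound on the coincident part. That bound keeps only smallness that survives coincidences ($\im$, $\Dim$-factors, $q$-entries, scalar products), and \eqref{no_empty_inner_fac} supplies one such factor per inner factor beyond $\mathcal{M}$. Either route works.
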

	\begin{proof}
	Because of the similarity to Lemma \ref{local_law_bound_lemma} we only provide an outline of the proof. We follow the same strategy as the proof of Lemma \ref{local_law_bound_lemma}. We use that each index $i_1, \dots, i_d$ only occurs twice, so the discussion on worst configuration of the right side of \eqref{worst_p_dist_analysis} is significantly simplified, the worst configuration is simply when $p_{\Psi}$ is as large as possible. For \eqref{degree_small} and \eqref{many_expansions} we technically bound the term with $\inner{q_{z_i}, e_{x_i}}$ replaced by $G_{q_{z_i} x_i}$. By bounding this altered term, we automatically obtain a bound for the original term since $\inner{q_{z_i}, e_{x_i}} \msc$ is the deterministic approximation of $G_{q_{z_i} x_i}$. 

	To show \eqref{degree_small}, use \eqref{T_prop_lemma_eq}, which corresponds to the case $d_N = 1$ in Lemma \ref{local_law_bound_lemma}. The error term $ \frac{1}{N^2 \tilde{\eta}}$ comes from the incidental index case, and that we cannot have empty inner factors, due to \eqref{no_empty_inner_fac}. 

	For \eqref{many_expansions}, we instead use \eqref{T_powers_norm_decay} on one of the $(\mathring{V}^{k_p})_{i_p i_{p+1}}$ of $P$, and \eqref{T_prop_lemma_eq} on the other $(\mathring{V}^{k_p})_{i_p i_{p+1}}$-factors, this loosely corresponds to the case $d_N = 2$ in Lemma \ref{local_law_bound_lemma}, but we have an extra factor $(1 - c_0)^{\hat{k}}$.  

	Finally, for \eqref{big_n1_intot}, we use the four factors of $\inner{q_{z_i}, e_{x_i}}$ to gain an $N^{-2}$ and use the $\im$ to obtain an $\mathcal{M}$-bound on the remaining part of the inner factors. 
	\end{proof}
	Crucially, the relation \eqref{n_d_relation} imply that if $d$ increases, the bounds in \eqref{degree_small} and \eqref{many_expansions} improve almost by a factor $1/(N \tilde{\eta})$. 

	Generalizing the example where we expanded \eqref{exapmle_expand_term} and the expansion mechanism in \cite{schnelli_xu_generalized_22} we show three expansion formulas. Let $\mathcal{T}_{d'}$ denote all terms $T$ of the form \eqref{general_term2} with $d(T) \geq d'$, and introduce, 
	\begin{equation}
		\widetilde{k} := \ceil{\frac{10 \log N}{\log (1 - c_0)^{-1}}},
		\label{log_times_expansion_iteration}
	\end{equation} 	
	which is the number of expansions we need to perform on the leading term before \eqref{many_expansions} becomes effective. We have the following different cases for our expansion mechanism: \begin{enumerate}[label=(\alph*)]
		\item ($i_1$ occurs in a diagonal Green function entry): In this case we can expand $T_d \in \mathcal{T}_d$ as \begin{equation}
			T_d = \sum_{T_{d'} \in \mathcal{T}_{d'}, d' = d + 1} T_{d'} + O(\e^{-t} N^{-8}). \label{diagonal_expansion}
		\end{equation} The sum above contains at most $11 \widetilde{k} d$ terms. The error $O(\e^{-t} N^{-8})$ comes from \eqref{many_expansions} with $\hat{k} \geq \widetilde{k}$. 
		\item ($i_1$ occurs in an off-diagonal Green function with another $i_j$): Here we can expand $T_d \in \mathcal{T}_d$ as \begin{equation}
			T_d = \sum_{T_{d'} \in \mathcal{T}_{d'}, d' = d + 1} T_{d'} + O\Bigg( \e^{-t} N^{\epsilon} \mathcal{M}^{r_{\sum}}  \Big(\prod_{l=1}^{\#_{\beta}} \frac{N^{\Fasump \epsilon}}{\tau^{r_l - 1}} \Big) \Big(\indicator (d \geq 3) \Big(\frac{1}{N \tilde{\eta}}\Big)^{d-2} + \frac{1}{N^2 \tilde{\eta}} \Big)+ \indicator (d = 2) \Big(\Big(\frac{1}{N \tilde{\eta}}\Big)^{d-1} \Big)\Bigg) + O(\e^{-t} N^{-8}). \label{off_diagonal_expansion}
		\end{equation} The sum above contains at most  $11 \widetilde{k} d$ terms.
		\item ($i_1$ and $i_{\mathfrak{m}_0}$ occur only in the Green function entries with $q_i$ or $q_j$, or in the $x_i$, and not only in the $x_i$): Assume wlog that $i_1$ occurs with $q_i$. Here we get \begin{equation}
			T_d = \sum_{T_{d'} \in \mathcal{T}_{d'}, d' = d + 1} T_{d'} + T_d[G_{q_i i_1} \to \inner{q_i, e_{i_1}} \msc] + O(\e^{-t} N^{-8}).
			\label{off_diagonal_q_expansion}
		\end{equation} The sum above contains at most  $11 \widetilde{k} d$ terms. 
	\end{enumerate}

	\begin{proof}[Proof of Lemma \ref{gaussian_to_gaussian_lemma}]
	We use \eqref{diagonal_expansion}--\eqref{off_diagonal_q_expansion} to show \eqref{second_comparison_step}. From the time derivative in \eqref{cumulant_expanded_gaussians} we get several terms with $d = 2$. From \eqref{diagonal_expansion}--\eqref{off_diagonal_q_expansion} we see that we can expand each term on the form \eqref{general_term2} where $i_1$ and $i_{\mathfrak{m}_0}$ not only occur in the $n_1^{(\mathrm{inner})}$-products. This means that we can expand all terms of the form \eqref{general_term2} that have $n_{1}^{(\mathrm{intot})} < 4$. We propose the following expansion scheme: Start with the terms given by \eqref{cumulant_expanded_gaussians}. There are a finite constant number of such terms, and they all have $d = 2$ and $n_1^{(\mathrm{intot})} = 0$. Fix some large integer $D$ and expand all terms using \eqref{diagonal_expansion}--\eqref{off_diagonal_q_expansion} until either they have $d = D$ or $n_1^{(\mathrm{intot})} = 4$. Each time we use \eqref{diagonal_expansion}--\eqref{off_diagonal_q_expansion}, the term we expand, $P_d$ is expressed as a linear combination of terms with either $n_1^{(\mathrm{intot})} \to n_1^{(\mathrm{intot})} + 1$ or $d \to d + 1$. We need to count the number of terms we get when carrying out this expansion. 

	Following the above expansion mechanism the number of terms $T$ with $$(n_1^{(\mathrm{intot})}(T), d(T)) = (n', d'), \; 0 \leq n' \leq 4, 2 \leq d' \leq D,$$ is bounded by \begin{equation*}
		(\# \mathrm{start terms}) \binom{n' + d' - 2}{d' - 2} (11 \widetilde{k} d)^{d-2},
	\end{equation*} since there are at most $11 \widetilde{k} d$ terms in the sums in \eqref{diagonal_expansion}--\eqref{off_diagonal_q_expansion}, and we get only one term with $n_{1}^{(\mathrm{intot})} \to n_1^{(\mathrm{intot})} + 1$ from \eqref{off_diagonal_q_expansion}. 
	
	First we bound the terms for which the terminating condition is $n_1^{(\mathrm{intot})} = 4$. We sum over the possible $d$ of the final term and use \eqref{big_n1_intot}. We also use that for all such terms, $g(t) \leq \e^{-t}$, $r_{\sum} \leq d$, $\#_{\beta} \leq d$ and since $1/\tau \leq N^{\Fasump \epsilon}$, the term for which $\Big(\prod_{l=1}^{\#_{\beta}} \frac{N^{\Fasump \epsilon}}{\tau^{r_l - 1}} \Big)$ is maximized is when $\#_{\beta} = d$. We get the bound \begin{equation*}
		B_1 := \sum_{d'=2}^{D-1} C (11\widetilde{k}d')^{d'-2} \e^{-t} \frac{1}{N} \mathcal{M}^{d'} N^{\Fasump d' \epsilon}. 
	\end{equation*}
	 Next we simplify the above expression and obtain, \begin{equation*}
		\begin{split}
		B_1 & \leq \frac{C \e^{-t} (D-2) (11(D-1) \widetilde{k})^{D-3} \mathcal{M}^{D-1} N^{\Fasump (D - 1) \epsilon}}{N} \\ 
		& \leq C_D \e^{-t} \frac{N^{\epsilon} (N^{16 \epsilon} N^{\Fasump \epsilon})^{D-1}}{N},
		\end{split}
	\end{equation*} where $C_D$ is a constant in $N$, depending on $D$, and we used that $\mathcal{M} \leq N^{16 \epsilon}$, and $\widetilde{k}^{D-3} \leq N^{\epsilon}$ for sufficiently large $N$.  

	Second we bound the terms with terminating condition $d = D$. We sum over the possible $n_1^{(\mathrm{intot})}$ and proceed as with $B_1$. We get \begin{equation*}
		\begin{split}
		B_2 & := \sum_{n_1^{(\mathrm{intot})=0}}^{3} C (11\widetilde{k}D)^{D-2} \e^{-t} N \mathcal{M}^{D} N^{\Fasump D \epsilon} \Bigg( \Big(\frac{1}{N \tilde{\eta}}\Big)^{D} + \frac{1}{N^2 \tilde{\eta}}  \Bigg) \\
		& \quad \leq C \e^{-t} \frac{\widetilde{k}^{D-2} N^{D (\Fasump + 16) \epsilon}}{N \tilde{\eta}} \leq C \e^{-t} \frac{N^{\epsilon} N^{D (\Fasump + 16) \epsilon + 14 \epsilon}}{N^{1/3}},
		\end{split}
	\end{equation*} 
	where we assumed that we choose $D$ sufficiently large for $1/(N \tilde{\eta})^D \leq 1/(N^2 \tilde{\eta})$. Also, we used that for sufficiently large $N$, $N^{\epsilon} \geq \widetilde{k}^{D-2}$.

	Finally, we need to account for the errors in the expansion formulas. To bound the total errors we count the number of terms in the expansion tree that get expanded. We get the bound \begin{equation*}
		\begin{split}
		B_3 & := \sum_{n' = 0}^{3} \sum_{d' = 2}^{D-1} C (11\widetilde{k}d')^{d'-2} \e^{-t} \Bigg(N^{\epsilon} \mathcal{M}^{d'} N^{\Fasump d' \epsilon} \frac{1}{N \tilde{\eta}} + N^{-8} \Bigg) \leq C_D \e^{-t} \frac{N^{2 \epsilon} N^{(\Fasump + 16)(D-1)\epsilon} N^{14 \epsilon}}{N^{1/3}}.
		\end{split}
	\end{equation*} The bounds $B_1$, $B_2$, $B_3$ are constructed so that \begin{equation*}
	\dt \EX[\widetilde{F}(X(t))] \leq B_1 + B_2 + B_3. 
	\end{equation*} By choosing $D$ large enough for $1/(N \tilde{\eta})^D \leq 1/(N^2 \tilde{\eta})$ to hold, we see that for sufficiently small $\epsilon > 0$ we have that $B_1 + B_2 + B_3 \leq \e^{-t} N^{-1/4}$, for sufficiently large $N$. So by integrating $\dt \EX[\widetilde{F}(X(t))]$ and using the bound on $B_1 + B_2 + B_3$, the proof is finished. 
	\end{proof}
	
	\subsubsection{Proof of \eqref{diagonal_expansion}}

	We start with a term $T_d$ on the form \eqref{general_term2}. Let $T_d^{(\mathrm{r})}$ denote the expression inside the expectation of $T_d$. By assumption there is a  factor $(G_{i_1 i_1} - \msc)$ in one of the inner factors. Replace the factor $G_{i_1 i_1} - \msc$ by $G_{i_1 i_1}$. Expand this inner factor using the identity $1 = -z \msc - (\msc)^2$.  Then on the term with $-z \msc$, use the identity $zG_{ii} = (HG)_{ii} - 1$. The term with the $-1$ contains no $i_1$ anymore, so it is exactly zero by \eqref{T_prop_lemma_eq}. Introduce a fresh index $c$ in the term with $(HG)_{i_1 i_1} = \sum_{c} h_{i_1 c}(t) G_{c i_1}$. Then we can cumulant expand using $h_{i_1 c}(t)$. Since $h_{i_1 c}(t)$ is Gaussian we only get the second cumulant term. If the derivative $\partial / \partial h_{i_1 c}(t)$ hits $G_{c i_1}$ we obtain one term with $-G_{cc} G_{i_1 i_1}$ and another with $-G_{c i_1} G_{c i_1}$. If the derivative hits $\partial_{\beta} F$, or $\partial_{\mathcal{J}_k} \func_{\tau}^{\beta_k}$, or any Green function factor inside the inner factors aside from $G_{c i_1}$, the new inner factor will be on the form \eqref{inner_fac_def2}, since $c$ and $i_1$ only occur in one Green function entry. Note also that the inner factor obtained with $-G_{c i_1} G_{c i_1}$ is of the form \eqref{inner_fac_def2}. Summarizing our argument so far we have
	\begin{align}
			T_d & = \frac{g(t)}{N^{d - \mathfrak{m}_0}} \sum_{\mathcal{I}} \prod_{p=1}^{\mathfrak{m}_0 - 1} (\mathring{V}^{k_p})_{i_p i_{p+1}} \EX[T_d^{(\mathrm{r})}] \nonumber \\
			& = \frac{g(t)}{N^{d - \mathfrak{m}_0}} \sum_{\mathcal{I} \cup \{c\}} \prod_{p=1}^{\mathfrak{m}_0 - 1} (\mathring{V}^{k_p})_{i_p i_{p+1}} (V(t))_{i_1 c} \EX\Big[T_d^{(\mathrm{r})}[G_{i_1 i_1} \to \msc G_{cc} G_{i_1 i_1}]\Big] \label{P_d_diag_expansion_leading_term} \\
			& \quad + \frac{g(t)}{N^{d - \mathfrak{m}_0}} \sum_{\mathcal{I} \cup \{c\}} \prod_{p=1}^{\mathfrak{m}_0 - 1} (\mathring{V}^{k_p})_{i_p i_{p+1}} (V(t))_{i_1 c} \EX\Big[T_d^{(\mathrm{r})}[G_{i_1 i_1} \to \msc G_{i_1 c} \mathrm{ + more off-diagonals}]\Big] \label{P_d_diag_expansion} \\
			& \quad + \frac{g(t)}{N^{d - \mathfrak{m}_0}} \sum_{\mathcal{I} \cup \{c\}} \prod_{p=1}^{\mathfrak{m}_0 - 1} (\mathring{V}^{k_p})_{i_p i_{p+1}} (V(t))_{i_1 c} \EX\Big[T_d^{(\mathrm{r})}[G_{i_1 i_1} \to - (\msc)^2 G_{i_1 i_1}]\Big]. \label{P_d_diag_msc2}
	\end{align} Note that in \eqref{P_d_diag_msc2} we used that $\sum_{c} (V(t))_{i_1 c} = 1$, and \eqref{V_scnd_cumulant}, so the second cumulant of $h_{i_1 c}(t)$ is contained in the $(V(t))_{i_1 c}$-factor.  The line \eqref{P_d_diag_expansion} contains all the resulting terms from the cumulant expansion except \eqref{P_d_diag_expansion_leading_term}. After splitting $(V(t))_{i_1 c} = (1 - \e^{-t})\mathring{V}_{i_1 c} + 1/N$ we obtain two new terms on the form \eqref{general_term2} for each term in \eqref{P_d_diag_expansion}, both with $d \to d + 1$. The first one will have $g(t) \to (1 - \e^{-t}) g(t)$, $\mathfrak{m}_0 \to \mathfrak{m}_0 + 1$, and $c$ as the new special index, and the second one will have $c$ as an index with homogeneous weight. 
	
	We need to analyze how many terms we obtain in \eqref{P_d_diag_expansion}. The derivative $\partial / \partial h_{i_1 c}(t)$ can either hit $\partial_{\beta} F$, or $\partial_{\mathcal{J}} \func_{\tau}^{\beta_k}$, or an inner factor. The first case gives 3 terms, the second case gives $3 (\#_{\beta}) \leq 3 d$ terms, and the third gives at most $2(2r_{\sum} + d) \leq 6d$ terms. The 2 $r_{\sum}$ comes from for the Green function factors with $q_i$, $q_j$ and the $\varrho$. So in total, \eqref{P_d_diag_expansion} consists of at most $3 + 3d + 6d \leq 11d$ terms (since $d \geq 2$). 

	Next we see cancellation between \eqref{P_d_diag_expansion_leading_term} and \eqref{P_d_diag_msc2}. We have 
	\begin{equation}
	\begin{split}
			\eqref{P_d_diag_expansion_leading_term} + \eqref{P_d_diag_msc2} & =  \frac{g(t)}{N^{d - \mathfrak{m}_0}} \sum_{\mathcal{I} \cup \{c\}} \prod_{p=1}^{\mathfrak{m}_0 - 1} (\mathring{V}^{k_p})_{i_p i_{p+1}} (V(t))_{i_1 c} \EX\Big[T_d^{(\mathrm{r})}[G_{i_1 i_1} \to \msc (G_{cc} - \msc)G_{i_1 i_1}]\Big] \\
			& = \frac{g(t)}{N^{d - \mathfrak{m}_0}} \sum_{\mathcal{I} \cup \{c\}} \prod_{p=1}^{\mathfrak{m}_0 - 1} (\mathring{V}^{k_p})_{i_p i_{p+1}} (V(t))_{i_1 c} \EX\Big[T_d^{(\mathrm{r})}[G_{i_1 i_1} \to \msc^2 (G_{cc} - \msc)]\Big] \\
			& \quad + \frac{g(t)}{N^{d - \mathfrak{m}_0}} \sum_{\mathcal{I} \cup \{c\}} \prod_{p=1}^{\mathfrak{m}_0 - 1} (\mathring{V}^{k_p})_{i_p i_{p+1}} (V(t))_{i_1 c} \EX\Big[T_d^{(\mathrm{r})}[G_{i_1 i_1} \to \msc (G_{cc} - \msc)(G_{i_1 i_1} - \msc)]\Big]. 
	\end{split} 
\label{k_1_increase_term} 
	\end{equation}
	As before we use $(V(t))_{i_1 c} = (1 - \e^{-t})\mathring{V}_{i_1 c} + 1/N$ on the last term above to obtain two terms with $d \to d + 1$ on the form \eqref{general_term2}. For the remaining term, where $G_{i_1 i_1}$ has been eliminated from the expression inside the expectation, we use precisely this, and sum with respect to $i_1$ and obtain, $$\sum_{i_1} (\mathring{V}^{k_1})_{i_1 i_2} (V(t))_{i_1 c} = (1- \e^{-t}) (\mathring{V}^{k_1 + 1})_{c i_2}.$$ Renaming $c$ to $i_1$ we have almost the same term, $T_d$ as we started with, the differences being $k_1 \to k_1 + 1$, and an extra $(\msc)^2$ and $(1 - \e^{-t})$. This term is the leading term, since its $d$ did not increase. 
	
	Repeating the above steps on the leading term,  \begin{equation*}
		\widetilde{k} := \ceil{\frac{10 \log N}{\log (1 - c_0)^{-1}}}
	\end{equation*} times, obtain at most $9\widetilde{k}d$ terms with $d \to d + 1$, and one term with the same $d$ as $T_d$, but $k_1 \to k_1 + \widetilde{k}$. Using the bound \eqref{many_expansions} on the term with $k_1 \to k_1 + \widetilde{k}$ we get the error $O(\e^{-t} N^{-8})$, and the proof of \eqref{diagonal_expansion} is concluded. 

	\subsubsection{Proof of \eqref{off_diagonal_expansion}}
	The strategy is similar to the diagonal case. Again we start with a term $T_d$, and we denote the expression inside the expectation by $T_d^{(\mathrm{r})}$. We assume that there is a $j \neq 1$ such that $G_{i_1 i_j}$ occurs in one of the inner factors. Assume that the other $i_1$ occurs together with $v_1$, the proof when the other $i_1$ occurs in the $\Dim$-products is identical.
	We comment at the end of the proof what happens when the other $i_1$ occurs in the $n_1^{(\mathrm{inner})}$-product. 
	We expand $T_d$ using the identity $1 = -z \msc - (\msc)^2$. On the term with $-z \msc$ we use the identity $zG_{i_1 i_j} = (HG)_{i_1 i_j} - \delta_{i_1 i_j}$. For the term with $(HG)_{i_1 i_j} = \sum_{c} h_{i_1 c}(t) G_{c i_j}$ we, cumulant expand. When the derivative does not hit $G_{c i_j}$ or $G_{i_1 v_1}$ We get one off-diagonal factor more, and after splitting $(V(t))_{i_1 c} = (1 - \e^{-t}) \mathring{V}_{i_1 c} + 1/N$ we obtain terms of the form \eqref{general_term2} with $d \to d + 1$. 
	
	So we need to handle the cases when $\partial / \partial h_{i_1 c}(t)$ hits $G_{c i_j}$ and $G_{i_1 v_1}$. When the derivative hits $G_{c i_j}$ we obtain $- G_{cc} G_{i_1 i_j}$ and $- G_{c i_1} G_{c i_j}$. The latter case again creates terms of the form \eqref{general_term2} with $d \to d + 1$, so that is fine. For the term with $- G_{cc} G_{i_1 i_j}$ we replace $G_{cc}$ by $\msc$ and obtain \begin{equation*}
		-G_{cc} G_{i_1 i_j} = -\msc G_{i_1 i_j} - (G_{cc} - \msc) G_{i_1 i_j}.  
	\end{equation*} We leave the term with $G_{cc}$ replaced by $\msc$ for now, we will come back to it later in \eqref{P_d_off_diag_exp_leading1}. The term with $(G_{cc} - \msc)$ is of form \eqref{general_term2} with $d \to d + 1$. 

	When $\partial / \partial h_{i_1 c}(t)$ hits $G_{i_1 v_1}$ we obtain two terms with $-G_{i_1 i_1} G_{c v_1}$ and $-G_{i_1 c} G_{c v_1}$, the latter is of form \eqref{general_term2} with $d \to d + 1$ so that is handled, but we still have $-G_{i_1 i_1} G_{c v_1}$, we treat it in \eqref{P_d_off_diag2}. 
	
	Summarizing our argument so far, we have \begin{align}
	T_d & = \frac{g(t)}{N^{d - \mathfrak{m}_0}} \sum_{\mathcal{I}} \prod_{p=1}^{\mathfrak{m}_0 - 1} (\mathring{V}^{k_p})_{i_p i_{p+1}} \EX[T_d^{(\mathrm{r})}] \nonumber \\
			& = \frac{g(t)}{N^{d - \mathfrak{m}_0}} \sum_{\mathcal{I} \cup \{c\}} \prod_{p=1}^{\mathfrak{m}_0 - 1} (\mathring{V}^{k_p})_{i_p i_{p+1}} (V(t))_{i_1 c} \EX\Big[T_d^{(\mathrm{r})}[G_{i_1 i_j} \to (\msc)^2 G_{i_1 i_j}]\Big] \label{P_d_off_diag_exp_leading1} \\
			& \quad + \frac{g(t)}{N^{d - \mathfrak{m}_0}} \sum_{\mathcal{I} \cup \{c\}} \prod_{p=1}^{\mathfrak{m}_0 - 1} (\mathring{V}^{k_p})_{i_p i_{p+1}} (V(t))_{i_1 c} \EX\Big[T_d^{(\mathrm{r})}[G_{i_1 v_1} \to G_{i_1 i_1} G_{c v_1} \mathrm{ + extra } \msc]\Big] \label{P_d_off_diag2} \\
			& \quad + \frac{g(t)}{N^{d - \mathfrak{m}_0}} \sum_{\mathcal{I} \cup \{c\}} \prod_{p=1}^{\mathfrak{m}_0 - 1} (\mathring{V}^{k_p})_{i_p i_{p+1}} (V(t))_{i_1 c} \EX\Big[T_d^{(\mathrm{r})}[G_{i_1 i_j} \to - (\msc)^2 G_{i_1 i_j}]\Big] \label{P_d_off_diag_msc2} \\
			& \quad +  \frac{g(t)}{N^{d - \mathfrak{m}_0}} \sum_{\mathcal{I}} \prod_{p=1}^{\mathfrak{m}_0 - 1} (\mathring{V}^{k_p})_{i_p i_{p+1}} \EX\Big[T_d^{(\mathrm{r})}[G_{i_1 i_j} \to \delta_{i_1 i_j}]\Big] \label{P_d_off_diag_delta2} \\
			& \quad + \sum_{T_{d'} \in \mathcal{T}_{d'}, d' = d + 1} T_{d'} .
	\end{align}
	The first thing to notice here is that \eqref{P_d_off_diag_exp_leading1} exactly cancels \eqref{P_d_off_diag_msc2}. For \eqref{P_d_off_diag2} we use the same argument as for \eqref{k_1_increase_term}, we replace $G_{i_1 i_1}$ by $\msc + (G_{i_1 i_1} - \msc)$. The term without $i_1$ is the same as $T_d$ except $k_1 \to k_1 + 1$ and $g(t) \to (1 - \e^{-t}) g(t)$, and we have an extra $\msc$ in one inner factor. 

	Next we need to handle \eqref{P_d_off_diag_delta2}. Replacing $G_{i_1 i_j}$ by $\delta_{i_1 i_j}$ is equivalent to replacing the remaining $i_j$ by $i_1$. If the other occurrence of $i_1$ and $i_j$ is $G_{i_1 i_j}$, this factor becomes $G_{i_1 i_1}$. Otherwise, the factors where $i_j$ and $i_1$ occur will still be on the form \eqref{inner_fac_def2}. Since we can bound $|G_{i_1 i_1}|$ by $1$ with probability $\geq 1 - N^{-D}$, we lose at most 2 factors of $1/(N \tilde{\eta})$, compared to \eqref{degree_small}. Additionally, because we have one index less, we gain a factor $1/N$. So the naive local law bound is (for sufficiently large $N$) \begin{equation*}
		\abs{ \frac{g(t)}{N^{d - \mathfrak{m}_0}} \sum_{\mathcal{I}} \prod_{p=1}^{\mathfrak{m}_0 - 1} (\mathring{V}^{k_p})_{i_p i_{p+1}} \EX\Big[T_d^{(\mathrm{r})}[G_{i_1 i_j} \to \delta_{i_1 i_j}]\Big] \label{P_d_off_diag_delta}} \leq g(t) N \mathcal{M}^{r_{\sum}} \Big(\prod_{l=1}^{\#_{\beta}} \frac{N^{\Fasump \epsilon}}{\tau^{r_l - 1}} \Big) \Bigg( \Big(\frac{1}{N \tilde{\eta}}\Big)^{d-2} + \frac{1}{N^2 \tilde{\eta}}  \Bigg). 	
	\end{equation*} This is the second term on the right of \eqref{off_diagonal_expansion}, and it is sufficiently small for $d \geq 3$. We need to analyze the case $d = 2$ closer. If the other occurrence of $i_1$ and $i_j$ is not together, then we only lose one factor of $1/(N \tilde{\eta})$, and so we get the third term bound in \eqref{off_diagonal_expansion}. The remaining possible $T_d$ for which $i_1$ and $i_j$ occur together, and we have $d = 2$ are when the inner factors of $T_d$ look like \begin{align*}
		& \frac{N}{\pi} \int_{I} \im G_{q_i q_j} \varrho'(Y(t)) \Dim G_{ab}^2 \di E, \\
		& \frac{N}{\pi} \int_{I} \im G_{q_i q_j} \varrho''(Y(t)) (\Dim G_{ab})^2 \di E, \\
		&\Big(\frac{N}{\pi} \int_{I} \im G_{q_i q_j} \varrho'(Y(t)) \Dim G_{ab} \di E \Big)^2. \\
	\end{align*} When one of the $G_{ab}$ is replaced by $\delta_{ab}$ these inner factors become \begin{align*}
		& \frac{N}{\pi} \int_{I} \im G_{q_i q_j} \varrho'(Y(t)) \Dim G_{aa} \di E, \\
		& \frac{N}{\pi} \int_{I} \im G_{q_i q_j} \varrho''(Y(t)) (\Dim G_{aa}) (\Dim 1)\di E, \\
		&\Big(\frac{N}{\pi} \int_{I} \im G_{q_i q_j} \varrho'(Y(t)) \Dim G_{aa} \di E \Big) \Big(\frac{N}{\pi} \int_{I} \im G_{qq} \varrho'(Y(t)) \Dim 1 \di E \Big).
	\end{align*} The two latter factors both contain a $\Dim 1$, so they are exactly $0$. And for the first case we can use the $\im G_{aa}$ to obtain the bound $\e^{-t} \mathcal{M} N^{\Fasump \epsilon}/(N \tilde{\eta})$, which is exactly what we want. 

	As we said in the beginning of the proof we need to comment on the case when the other occurrence of $i_1$ is part of the $n_1^{(\mathrm{inner})}$-factor. In fact, this only simplifies things, since then we do not have to handle the term \eqref{P_d_off_diag2}. 

	As with the proof for \eqref{diagonal_expansion}, we repeat the above iteration \eqref{log_times_expansion_iteration}, and we have the same bound on the number of terms in the sum $\sum_{T_{d'} \in \mathcal{T}_{d'}, d' = d + 1}$. Also, we added a factor $N^{\epsilon}$ in the error terms coming from the $\delta_{i_1 v_1}$ to accommodate the $\log N$ factor in \eqref{log_times_expansion_iteration}.  

	\subsubsection{Proof of \eqref{off_diagonal_q_expansion}} 

	This proof is similar to \eqref{off_diagonal_expansion} so we skip many details and only remark on the differences. The index $i_1$ occurs in $G_{i_1 q_i}$ (or $G_{i_1 q_j}$). Expand this inner factor using the identity $1 = -z \msc - (\msc)^2$ as before. On the term with the $-z \msc$, use the identity $z G_{i_1 q_i} = \sum_{c} h_{i_1 c}(t) G_{c q_i} - \inner{q_i, e_{i_1}}$. Next, perform cumulant expansion with the $h_{i_1 c}(t)$. The arguments are then the same as for \eqref{off_diagonal_expansion} with the exception that the term with $\delta_{i_1 v_1}$ no longer exist, instead that term is $T_d$ but with $G_{i_1 q_i} \to \inner{q_i, e_{i_1}}$.

\section*{Acknowledgements}
Z. G. Bao is partially supported by Hong Kong RGC Grant GRF 16304724
and 17304225.  T. Bucht and K. Schnelli are supported by Grant No. VR-2021-04703 from the Swedish Research Council.

\printbibliography

\begin{appendix}
\section{Proof sketch of Lemma \ref{diagonal_variance_perturbation_lemma}}
\label{easy_interpolation_lemma_proof_section}
The interpolation in \eqref{diagonal_var_interpolation_eq} has the nice properties of both the interpolations \eqref{interpolation_step_1} and \eqref{gaussian_to_gaussian_interpolating_flow}, \ie the variance terms almost cancel, and the matrices being interpolated are Gaussian. Therefore, we only briefly sketch the proof below.  
\begin{proof}[Proof of Lemma \ref{diagonal_variance_perturbation_lemma}]
Recall the interpolating flow \eqref{diagonal_var_interpolation_eq}, 
\begin{equation*}
	H(t) = \e^{-t/2} W^V + \sqrt{1 - \e^{-t}} W^{\widetilde{V}}.
\end{equation*} Entry-wise we denote the flow as,
\begin{equation*}
	h_{ab}(t) = \e^{-t/2} w_{ab}^{(v)} + \sqrt{1 - \e^{-t}} w_{ab}^{(\widetilde{v})}.
\end{equation*}
Recall that $w_{ab}^{(v)}, w_{ab}^{(\widetilde{v})}$ are centered Gaussians with second cumulants given by,
\begin{equation*}
	c^{(2)} (w_{ab}^{(v)}) = V_{ab}, \qquad c^{(2)}(w_{ab}^{(\widetilde{v})}) = (1 + \delta_{ab}) V_{ab}.
\end{equation*}

Proceeding as in the previous cases we estimate $\abs{\dF \EX[\widetilde{F}(X(t))]}$. Starting off with \eqref{basic_time_derivative_computation} yields,
\begin{equation}
	\begin{split}
		\dF \EX[\widetilde{F}(X(t))] & = \sum_{a, b} \frac{1 + \delta_{ab}}{2} \EX \left[\dot{h}_{ab}(t) \frac{\partial (\widetilde{F}(X(t)))}{\partial h_{ab}(t)}\right] \\ 
		& = \sum_{a,b} \frac{1+\delta_{ab}}{2} \, \mathbb{E} \!\left[ \left( -\frac12 \e^{-t/2} w_{ab}^{(v)} + \frac{\e^{-t}} {2\sqrt{1-\e^{-t}}} \,w_{ab}^{(\widetilde{v})} \right) \frac{\partial (\widetilde{F}(X(t)))}{\partial h_{ab}(t)} \right] \\ 
		& = -\sum_{a,b} \frac{1+\delta_{ab}}{4} \e^{-t} V_{ab}\,\mathbb{E}\!\left[\frac{\partial^2 (\widetilde{F}(X(t)))}{\partial h_{ab}(t)^2}\right] +\sum_{a,b}\frac{(1+\delta_{ab})^{2}}{4} \e^{-t}V_{ab}\,\mathbb{E}\!\left[\frac{\partial^2 (\widetilde{F}(X(t)))}{\partial h_{ab}(t)^2}\right].
	\end{split}
	\label{first_computations}
\end{equation} In the last equality we use cumulant expansion and since $W^V$ and $W^{\widetilde{V}}$ are centered Gaussian matrices, all cumulants except the second vanish. Next, we see that the terms on the last row of \eqref{first_computations} almost cancel, the terms in the sums for $a \neq b$ are identical. Hence, we obtain,
\begin{equation}
	\begin{split}
		\dF \EX[\widetilde{F}(X(t))] & = \sum_{a} \frac12 \e^{-t} V_{aa} \EX \left[\frac{\partial^2 (\widetilde{F}(X(t)))}{\partial h_{aa}(t)^2}\right].
		\label{second_computation}
	\end{split}
\end{equation} 

Finally, we conclude the proof of Lemma \ref{diagonal_variance_perturbation_lemma} by using that $V_aa \leq \Csup / N$. This means that we can bound the resulting terms in \eqref{second_computation} using Lemma \ref{local_law_bound} in the same way that we bounded the terms in the expansion \eqref{cumulant_expansion1} with $d_N \leq 0$. 
\end{proof}
\end{appendix}
				
\end{document}